\theoremstyle{plain}
\newtheorem{theorem}{Theorem}[section]
\newtheorem{lemma}[theorem]{Lemma}
\newtheorem{claim}{Claim}  % numbered sequentially
\newtheorem*{thm*}{Theorem}
\newtheorem*{claim*}{Claim}
\theoremstyle{definition}
\newtheorem{example}[theorem]{Example}
\newtheorem{remark}[theorem]{Remark}
\newcommand{\defin}{\textit}
\newcommand{\excise}[1]{}
\newcommand{\Ess}{\mathscr{E}\!ss}       % essential set
\newcommand{\esse}{\mathfrak{e}}         % essential box
\newcommand{\triple}{{\bm{\tau}}}
\newcommand{\isom}{\cong}
\renewcommand{\bar}{\overline}
\renewcommand{\tilde}{\widetilde}
\renewcommand{\setminus}{\smallsetminus}
\newcommand{\CC}{\mathbb{C}}
\newcommand{\KK}{\mathbb{K}}       % ground field K
\newcommand{\mult}{\mathrm{mult}}
\newcommand{\e}{\pmb{e}}
\newcommand{\bk}{\pmb{k}}    % duplicate??  for triples
\newcommand{\bp}{\pmb{p}}
\newcommand{\bq}{\pmb{q}}
\newcommand{\br}{\pmb{r}}
\newcommand{\EYD}{\mathcal{E}}
\newcommand{\fp}{p}              % for T-fixed points
\newcommand{\Tableau}[2][sY]{{\text{\tableau[#1]{#2}}}}
\begin{document}

\author{David Anderson, Takeshi Ikeda, Minyoung Jeon, Ryotaro Kawago}

\address{Department of Mathematics, The Ohio State University, Columbus OH 43210, USA}
\email{anderson.2804@math.osu.edu}
\email{jeon.163@buckeyemail.osu.edu}

\address{Department of Mathematics, Faculty of Science and Engineering, Waseda University, 3-4-1 Okubo, Shinjuku, 
Tokyo 169-8555
JAPAN}
\email{gakuikeda@waseda.jp}
\email{kawa3@akane.waseda.jp}

\title[Multiplicities of vexillary Schubert varieties]
{The multiplicity of a singularity in a vexillary Schubert variety}

\date{December 9, 2021}

\thanks{DA and MJ were partially supported by NSF CAREER DMS-1945212.}
\thanks{TI was partially supported by Grant-in-Aid for Scientific Research(C) 16H03920, 17H02838, 18K03261, 20K03571, 20H00119.}

\begin{abstract}
In a classical-type flag variety, we consider a Schubert variety associated to a vexillary (signed) permutation, and establish a combinatorial formula for the Hilbert-Samuel multiplicity of a point on such a Schubert variety.  The formula is expressed in terms of {\it excited Young diagrams}, and extends results for Grassmannians due to Krattenthaler, Lakshmibai-Raghavan-Sankaran, and for the maximal isotropic (symplectic and orthogonal) Grassmannians to Ghorpade-Raghavan, Raghavan-Upadhyay,  Kreiman, and Ikeda-Naruse. We also provide a new proof of a theorem of Li-Yong in the type A vexillary case.

The main ingredient is an isomorphism between certain neighborhoods of fixed points, known as Kazhdan-Lusztig varieties, which, in turn, relies on a direct sum embedding previously used by Anderson-Fulton to relate vexillary loci to Grassmannian loci. 
\end{abstract}

\maketitle

%%%%%%%%%%%%%%%%%%%%%
\section{Introduction}
%%%%%%%%%%%%%%%%%%%%%

Singularities of Schubert varieties have been the subject of much study for decades.  At a coarse level, there are efficient criteria for determining whether a given Schubert variety is smooth or singular (e.g., via pattern avoidance); see \cite{BL} for a detailed survey of this and related problems.  In sufficiently large flag varieties, nearly all Schubert varieties are singular, so one is led to consider the question of finer invariants of singularities.  For instance, one can ask about the {\it Hilbert-Samuel multiplicity} of a point.  This is a positive integer which measures singularities: it equals $1$ if and only if the point is nonsingular, and larger multiplicities correspond to more complicated singularities.  

Schubert varieties $\Omega_w$ and torus-fixed points $\fp_v$ are both indexed by the Weyl group, with $\fp_v\in\Omega_w$ iff $w\leq v$ in Bruhat order.  So for any pair $(w,v)$ with $w\leq v$, a natural problem arises:
\begin{quotation}
 {\it Find an explicit formula for the positive integer $\mult_{\fp_v}\Omega_w$. }
\end{quotation}
(Using a Borel group action, any point $x\in\Omega_w$ may be translated to some torus-fixed point $\fp_v$, so answering this question for fixed points solves the problem for arbitrary points.)

The main goal of this article is to solve the above problem in the case where the ambient flag variety is of classical type, and the Schubert variety $\Omega_w$ is indexed by a {\it vexillary} element of the Weyl group (in the sense of \cite{AF18}, see \S\ref{s.vexillary} for the definition).

In the case of Grassmannians---including the (co)minuscule Grassmannians of maximal isotropic subspaces in types B, C, and D---several combinatorial, determinantal, and Pfaffian formulas are known, and due to many mathematicians. In type A, Lakshmibai-Weyman \cite{LW}, and Rosenthal-Zelevinsky gave a determinantal formula \cite{RZ}, while Krattenthaler \cite{Kra} and Lakshmibai-Raghavan-Sankaran \cite{LRS} gave combinatorial formulas, in terms of non-intersecting lattice paths.  The last of these results was reinterpreted by Kreiman \cite{Kre1} and Ikeda-Naruse \cite{IN} in terms of combinatorial objects called {\it excited Young diagrams}.  
In other classical types, Pfaffian formulas for maximal isotropic Grassmannians were given by Ikeda \cite{Ikeda} and Ikeda-Naruse \cite{IN}.  Combinatorial formulas for the Lagrangian Grassmannian were given by Ghorpade-Raghavan \cite{GR}, Kreiman \cite{Kre2}, and Ikeda-Naruse \cite{IN} independently. For the maximal orthogonal Grassmannian, combinatorial formulas were given by Raghavan-Upadhyay \cite{RU} and Ikeda-Naruse \cite{IN}.

Some of these authors (\cite{Ikeda,IN,Kre1,Kre2,LRS}) used torus actions to compute an {\it equivariant} multiplicity; in the (co)minuscule case, this leads directly to a formula for the usual multiplicity, e.g., as explained in \cite[Proposition 9.1]{IN}.  Outside of the cominuscule setting, this technique is not available, because the ideals are no longer homogeneous with respect to the natural torus action.  Partly for this reason, less is known beyond the Grassmannian case.

The class of {\it vexillary permutations} is a natural generalization of Grassmannians.  These were Lascoux and Sch\"utzenberger in the context of determinantal formulas for Schubert polynomials in type A.  An analogous notion for signed permutations (in the other classical types) was introduced by Billey and Lam \cite{BL}; this was later revisited in a geometric context, better adapted to our situation \cite{AF18}.

Li and Yong took this first step beyond Grassmannians: in the type A flag variety, they computed the multiplicity of a Schubert variety indexed by a vexillary permutation \cite{LY}.  Their methods involve a detailed analysis of the ideal defining such a Schubert variety, and include a Gr\"obner basis for such ideals.  Their combinatorial language is that of {\it flagged semistandard tableaux}, which is natural bijection with others mentioned above (lattice paths, excited Young diagrams) and also admits a determinantal formula. 

The type A results of Li and Yong inspired our investigation of vexillary Schubert varieties in the other classical types.  
To state the theorem, we need some notation, which will be reviewed in more detail below (see \S\ref{s.vexillary}).  Each vexillary permutation $w$ comes with a partition $\lambda$, and to any $v\geq w$ we associate an {\it outer shape} $\mu \supset \lambda$.  An {\it excitation} (or {\it excited Young diagram}) of $\lambda$ in $\mu$ is a collection of boxes $C \subset \mu$ which are obtained from those of $\lambda$ by a sequence of certain local moves (whose precise description depends on type).  We write $\EYD_\mu(\lambda)$ for the set of all such excitations.

Here is an example in type A, where a permutation is vexillary if and only if it avoids the pattern $2\,1\,4\,3$, and the local moves generating excited Young diagrams are of the form
\[
\Tableau{\gray&\\&}\rightarrow
\Tableau{&\\&\gray}.
\]
For the vexillary permutation $w=1\,2\,5\,4\,3\,6\,7$, the corresponding partition is $\lambda=(2,1)$.  The permutation $v=5\,2\,6\,4\,1\,7\,3$ is above $w$ in Bruhat order, and the associated outer shape is $\mu=(3,3,2)$.  Then $\EYD_\mu(\lambda)$ consists of the $5$ diagrams
$${\small
\Tableau{\gray&\gray&\\ \gray&&\\&}\quad\;
\Tableau{\gray&&\\ \gray&&\gray\\&}\quad\;
\Tableau{\gray&\gray&\\&&\\&\gray}\quad\;
\Tableau{\gray&&\\&&\gray\\&\gray}\quad\;
\Tableau{&&\\&\gray&\gray\\&\gray}.}
$$

\begin{thm*}
Let $w$ be a vexillary (signed) permutation, and $v$ any element such that $w\leq v$.  Let $\fp_v\in\Omega_w$ be the corresponding fixed point and Schubert variety, and let $\lambda\subset \mu$ the corresponding partition and outer shape.  Then
\[
  \mult_{\fp_v}\Omega_w = \#\EYD_\mu(\lambda),
\]
where $\EYD_\mu(\lambda)$ is the set of excited states of $\lambda$ inside $\mu$.
\end{thm*}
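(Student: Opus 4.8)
The plan is to reduce the computation of $\mult_{\fp_v}\Omega_w$ to the already-known Grassmannian case by means of the direct sum embedding of Anderson-Fulton, and then to check that the excited diagram count is preserved under this reduction. The first move is to localize the problem. The Hilbert-Samuel multiplicity $\mult_{\fp_v}\Omega_w$ is an invariant of the local ring $\mathcal{O}_{\Omega_w,\fp_v}$, equivalently of the \emph{Kazhdan-Lusztig variety} obtained by intersecting $\Omega_w$ with the opposite Schubert cell through $\fp_v$. Since that opposite cell is an affine space meeting $\Omega_w$ cleanly along the Schubert cell, the Kazhdan-Lusztig variety carries exactly the singularity data at $\fp_v$, and the multiplicity of $\Omega_w$ at $\fp_v$ equals the multiplicity of this affine variety at its distinguished point. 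This replaces a global question about Schubert varieties by a local question about an explicit affine scheme.

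Next I would bring in the direct sum embedding. For a vexillary (signed) permutation $w$, this realizes the ambient classical flag variety inside a flag variety $\bigFl$ of the doubled vector space, arranged so that the vexillary Schubert conditions are ``unfolded'' into Grassmannian-type conditions. The key claim to establish is that this embedding induces an isomorphism of the relevant Kazhdan-Lusztig varieties, possibly after crossing one side with a smooth affine-space factor, where the target corresponds to a Grassmannian pair $(w',v')$. Because Hilbert-Samuel multiplicity is invariant under isomorphism of local rings and is unchanged upon multiplying by affine space, this identification yields $\mult_{\fp_v}\Omega_w = \mult_{\fp_{v'}}\Omega_{w'}$.

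With the reduction in place, I would apply the known Grassmannian multiplicity formula---due to Krattenthaler and Lakshmibai-Raghavan-Sankaran in type A, and to Ghorpade-Raghavan, Raghavan-Upadhyay, Kreiman, and Ikeda-Naruse in the isotropic types---which computes the Grassmannian multiplicity as $\#\EYD_{\mu'}(\lambda')$ for the shape data $(\lambda',\mu')$ attached to $(w',v')$. The remaining step is purely combinatorial: one checks that the partition $\lambda$ and outer shape $\mu$ assigned to the vexillary pair $(w,v)$ agree with the Grassmannian data produced by the embedding, so that $\EYD_\mu(\lambda)$ and $\EYD_{\mu'}(\lambda')$ are in bijection, compatibly with the type-dependent local moves that generate the excited diagrams.

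I expect the main obstacle to be establishing the isomorphism of Kazhdan-Lusztig varieties, especially in types B, C, and D. There the isotropic symplectic or orthogonal conditions interact nontrivially with the doubling construction, and one must verify both that the embedding is compatible with the bilinear form and that it genuinely separates the vexillary conditions into clean Grassmannian ones with only a smooth transverse complement. Confirming that no stray singular contributions survive---that is, that the complementary factor really is affine space and contributes multiplicity one---is the technical heart of the argument, and is precisely the point at which the cominuscule equivariant-multiplicity shortcut is unavailable and a genuinely new geometric input is required.
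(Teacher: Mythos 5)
Your proposal follows essentially the same route as the paper: localize to Kazhdan--Lusztig varieties, use the direct sum embedding to establish an isomorphism of Kazhdan--Lusztig varieties up to a factor of affine space (the paper's Theorem~\ref{t.KLisom}, proved by explicit row-reduction of matrix representatives), and then invoke the excited-Young-diagram formula \eqref{e.GrMult} for Grassmannians. However, there is one genuine missing step hiding in your phrase ``the target corresponds to a Grassmannian pair $(w',v')$.'' The direct sum embedding does \emph{not} land on a Grassmannian pair: the conditions cut out in the doubled space are of the form $\dim(G_{r_i}\cap\Delta)\geq k_i$, i.e.\ several steps of the \emph{moving} flag intersected with the single \emph{fixed} subspace $\Delta$, so the target is the inverse-Grassmannian Schubert variety $\Omega_{w_\lambda^{-1}}$ at the point $\fp_{w_\mu^{-1}}$, living in a flag variety, not a Grassmannian. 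One cannot simply project $\Omega_{w_\lambda^{-1}}$ to the Grassmannian (it is not a pullback from there); the paper must first pass from $(\Omega_{w_\lambda^{-1}},\fp_{w_\mu^{-1}})$ to $(\Omega_{w_\lambda},\fp_{w_\mu})$ via the local isomorphism $(\Omega_w,\fp_v)\isom(\Omega_{w^{-1}},\fp_{v^{-1}})$ of Lemma~\ref{l.inverse-isom}, proved using the two projections of $Z(w)=\overline{G\cdot(\fp_e,\fp_w)}$ together with the diagonal $v^{-1}$-translation, and only then project along the smooth fibration to the Grassmannian. This inverse lemma is elementary but not automatic (globally $X_w\not\isom X_{w^{-1}}$ in general), and without it your reduction to the known Grassmannian formula does not close. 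Your final combinatorial step is also lighter than needed in the paper: there is no separate bijection of diagram sets to check, since the shapes $\lambda\subseteq\mu$ are the same on both sides by construction; the real combinatorial content is the count of surviving free entries (the paper's Lemmas~\ref{l.typeAlabels} and~\ref{l.typeClabels}) showing the affine factor has exactly dimension $\ell(v)-|\mu|$, which is what makes the Kazhdan--Lusztig isomorphism land on the full opposite cell $X_{w_\mu^{-1}}^\circ$.
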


\noindent
In the setting of the above example, the theorem says $\mult_{\fp_v}\Omega_w = 5$.

Our approach yields an {\it a priori} reason for the fact that multiplicities of points on vexillary Schubert varieties are computed by a formula which also computes multiplicities on Grassmannian Schubert varieties; in particular, we provide an alternative proof of \cite[Theorem~6.2]{LY}.  
The main innovation is an explicit isomorphism, up to irrelevant factors of affine spaces, between canonical affine neighborhoods of fixed points in Schubert varieties in the flag variety and Grassmannian.  These neighborhoods are sometimes called {\it Kazhdan-Lusztig varieties}, and they have been studied by many authors \cite{EFRW,K,LY2,LY,WY}; see \S\ref{s.KLisom} for details.
  
The isomorphism is set up using a direct-sum embedding of a flag variety in a larger one.  When $\Omega_w$ is a vexillary Schubert variety in the (ordinary or isotropic) flag variety for a vector space $V$, we construct local isomorphisms with the Schubert variety $\Omega_\lambda$ in the Grassmannian of (ordinary or isotropic) half-dimensional subspaces of $V\oplus V$.  This allows us to reduce the computation of local invariants of points on vexillary Schubert varieties to a known calculation on Grassmannian Schubert varieties.  We expect this method will find further use; see \cite{J} for an application to Kazhdan-Lusztig polynomials.

The arguments proceed in parallel for all classical types.  All the essential ideas appear in type A, so the reader is encouraged to digest that case first.

One can consider more refined invariants, such as the Hilbert series.  For Schubert varieties in any (co)minuscule Grassmannian---including types $E_6$ and $E_7$, as well as the maximal Grassmannians in classical types---Lakshmibai and Weyman gave a positive recursive formula for the Hilbert series and multiplicity \cite{LW}.  For classical-type Grassmannians, the Hilbert series was computed explicitly by Kodiyalam-Raghavan \cite{KR} in type A, Ghorpade-Raghavan \cite{GR} in type C, and Raghavan-Upadhyay \cite{RU} in type D.  Using equivariant K-theory and variations on excited Young diagrams, these invariants were also studied by Graham-Kreiman \cite{GrKr}.  For vexillary Schubert varieties in type A, the Hilbert series was studied by Li-Yong in connection with Kazhdan-Lusztig polynomials \cite{LY2}.  It would be interesting to see analogues of these results for vexillary Schubert varieties in other classical types.

Some of our results were announced at FPSAC XXXI (2019) and appeared in the proceedings of that meeting \cite{AIJK19}.  The proof sketch given there (\cite{AIJK19}, \S 4) contains an incorrect assertion and a major gap; however, it is replaced by the significantly stronger isomorphism of Kazhdan-Lusztig varieties given here in \S\ref{s.KLisom}.

%%%%%
\medskip
\noindent
{\it Acknowledgements.}   TI is particularly indebted to K. N. Raghavan and Vijay Ravikumar for stimulating discussions in the early stage of the project.  We also thank Tomoo Matsumura for valuable discussions.

%%%%%%%%%%%%%%%
\section{Notation and Definitions}

We work over an algebraically closed field $\KK$ of characteristic\footnote{We rely on some sources where results  are stated over $\CC$---e.g., \cite{IN} and \cite{Kre1,Kre2}---but the same proofs work over arbitrary algebraically closed fields.  See \cite{LW}, which implies that for (co)minuscule Schubert varieties, the multiplicity is independent of characteristic; the question of whether this is true in general is raised at the end of \cite[\S8]{LY}.  In type A, our methods work in arbitrary characteristic, but in other types, we must avoid characteristic $2$.} not equal to $2$.

Let $G$ be a semisimple linear algebraic group defined over $\KK$.  Fix a maximal torus $T$ and a Borel subgroup $B$ such that $T\subset B\subset G$.  Let $W=N_G(T)/T$ be the Weyl group.  The fixed points of $G/B$ under the left action of $T$ are naturally indexed by the Weyl group $W$ of $G$ with respect to $T$: given $w\in W$, we write $\fp_w\in G/B$ for the associated fixed point.

Let $B_{-}$ be the opposite Borel subgroup, so $B\cap B_{-}=T$.  The {\it Schubert 
cell} $\Omega_w^\circ$ is the $B_{-}$ orbit $B_{-}\cdot\fp_w$.  The closure of a Schubert cell  is the {\it Schubert variety} $\Omega_w=\overline{\Omega_w^\circ}$, a subvariety of codimension $\ell(w)$, where $\ell(w)$ is the length of $w$.  The Weyl group is partially ordered by {\it Bruhat order}, defined by $w\leq v$ if and only if $\fp_v\in \Omega_w$.  
The Schubert variety $\Omega_w$ is a disjoint union of Schubert cells $\Omega_v^\circ$ such that $v\geq w$.

We will also occasionally consider {\it opposite Schubert cells} $X_w^\circ = B\cdot \fp_w$.    These are affine spaces of dimension $\ell(w)$.

In what follows, we quickly review type-specific notation for Schubert varieties in Grassmannians and flag varieties.  More discussion can be found in sources such as \cite{FP}.  In each case, after fixing a basis, $G$ will be a particular matrix group, $T$ will be diagonal matrices in $G$, $B$ upper-triangular matrices in $G$, and $B_-$ lower-triangular matrices in $G$.

\subsection*{Type A}

Let $V$ be a vector space of dimension $n$ with basis $\e_1,\ldots,\e_n$, and $G=SL(V)=SL_n$.    
The Weyl group is $S_n$, the symmetric group of permutations of $[n] := \{1,\ldots,n\}$.  We  write permutations $w\in S_n$ in one-line notation, by recording values: $w = w(1)\;w(2)\cdots w(n)$.

The flag variety $G/B$ is the variety $Fl(V)$ of complete flags 
$$E_\bullet: 
\{\pmb{0}\}\subset E_1\subset E_2\subset \cdots
\subset E_n=\KK^n,\quad
\dim(E_i)=i\quad(1\leq i\leq n).
$$
For $w\in S_n$ the corresponding $T$-fixed point $\fp_w$ is given by  $E_i=\mathrm{span}\{\e_{w(1)},\ldots,\e_{w(i)}\}.$

The $B_-$-fixed flag $F^\bullet$ is given by subspaces $F^i = \mathrm{span}\{\e_n,\ldots,\e_{i+1}\}$, of codimension $i$.  Schubert varieties are defined by intersection conditions with $F^\bullet$, as follows.  For each $w\in S_n$, the function $k_w$ is defined on the $n\times n$ grid by 
$$
k_w(q,p)=\#\{s\in [n]\;|\;s\leq p,\;w(s)> q\},\quad (q,p)\in [n]\times [n].
$$
The Schubert variety is 
\begin{equation}
\Omega_w=\{E_\bullet\;|\; \dim(E_{p}\cap F^q)\geq k_w(q,p) \;\mbox{for all}\;
1\leq q,p\leq n \}. \label{eq:DefOmage_w}
\end{equation}
An alternative description is
$$
\Omega_w=\{E_\bullet\;|\;
\mathrm{rank} (E_{p}\rightarrow V/F^q)\leq r_w(q,p)
\;\mbox{for all}\;
1\leq q,p\leq n
\},$$
where 
\begin{align*}
r_w(q,p) &=\#\{s\in [n]\;|\;s\leq p,\;w(s)\leq q\},\quad (q,p)\in [n]\times [n] \\
         &= p-k_w(q,p).
\end{align*}

Let $d$ be an integer such that $1\leq d\leq n=\dim(V)$.
In the Grassmannian $Gr(d,V)$ of $d$-dimensional subspaces, $T$-fixed points and Schubert varieties are indexed by partitions $\lambda$ whose Young diagram fits inside the $d\times (n-d)$ rectangle; that is,
\[
  \lambda = (n-d \geq \lambda_1 \geq \cdots \geq \lambda_d \geq 0 ).
\]
Each such $\lambda$ determines a {\it Grassmannian permutation} $w_\lambda$, with unique descent at $d$, as follows.  Let $I(\lambda) = \{ i_1 < \cdots < i_d \}$ be defined by $i_k = k+\lambda_{d+1-k}$, and let $J(\lambda) = \{ j_1 < \cdots < j_{n-d} \} = [n]\setminus I(\lambda)$ be the complement.  Then
\[
  w_\lambda = i_1 \cdots i_d\; j_1 \cdots j_{n-d}.
\]
This has length is equal to the number of boxes in $\lambda$, that is, $\ell(w_\lambda) = |\lambda|:= \sum \lambda_k$. 

We refer the reader to the example in the introduction and \cite[Section 3.3]{IN} for the relation between a partition $\lambda$ and its Young diagram as well as $w_\lambda$.

The $T$-fixed point $\fp_\lambda\in Gr(d,V)$ corresponds to the subspace spanned by $\{e_i \,|\, i\in I(\lambda)\}$.  This is the same as the $d$-dimensional component of the $T$-fixed flag corresponding to $w_\lambda$.

The Schubert variety $\Omega_\lambda \subset Gr(d,V)$ is the closure of the Schubert cell $\Omega_\lambda^\circ = B_-\cdot \fp_\lambda$.  It can be described as
\[
  \Omega_\lambda = \{ E \in Gr(d,V) \,|\, \dim( E \cap F^{\lambda_k+d-k} ) \geq k \text{ for } 1\leq k \leq d \}.
\]
This has codimension $|\lambda|$ in $Gr(d,V)$.

The projection $\pi_d\colon Fl(V) \to Gr(d,V)$ which sends a flag $E_\bullet$ to $E_d$ is a locally trivial $G$-equivariant fiber bundle, with smooth fibers; the fiber over $E_d \subset V$ is naturally identified with $Fl(E_d) \times Fl(V/E_d)$.  Furthermore, one has
\[
  \Omega_{w_\lambda} = \pi_d^{-1}\Omega_\lambda,
\]
so in particular $\Omega_{w_\lambda}$ is a smooth fiber bundle over $\Omega_\lambda$.

\subsection*{Type C}

Let $V$ be a vector space of dimension $2n$ with basis $\e_{\bar n},\ldots,\e_{\bar 1},\e_1,\ldots,\e_{n}$.  Let us define a skew-symmetric bilinear form on $V$ by 
$\langle\e_i,\e_j\rangle=\langle\e_{\bar \imath},\e_{\bar \jmath}\rangle
=0$ and $\langle\e_i,\e_{\bar \jmath}\rangle
=-\langle\e_{\bar \jmath},\e_i\rangle=\delta_{i,j}$
for $1\leq i,j\leq n.$  (The bar is meant to indicate a negative number, so $\bar{a} = -a$.)  The Gram matrix looks like this:
\[
\left[ \begin{array}{ccc|ccc}
 & & & & &  1 \\
 & & & &  \iddots &\\
 & & &  1 & &  \\ \hline
 & & -1 & & &  \\
 & \iddots & & & &  \\ 
-1 & & & & &  \\
\end{array} \right].
\]
Let $G=Sp(V)=Sp_{2n}$ be the symplectic group with respect to the form. As we noted above, 
we choose a torus $T$ consisting of diagonal matrices in $Sp_{2n}(\KK)$, a Borel group 
$B$ of the upper triangular matrices in $Sp_{2n}(\KK)$, and an Borel subgroup $B_-$ opposite to $B$ of the lower-triangular matrices.  The Weyl group is the group of signed permutations $W_n=S_n\ltimes \{\pm 1\}^n$.  This is often realized as the subgroup $W_n$ of $S_{2n}$ such that $w(\bar\imath)=\bar{w(i)}$ for all $i$. Here we consider $S_{2n}$ as the group of permutations of $\{\pm1,\ldots,\pm n\}.$   We usually write a signed permutation $w$ in ``one-line notation'' $w(1)\cdots w(n)$ (note that we only need to record $w(i)\,(i\in [n])$.

An {\it isotropic flag} is a sequence of linear subspaces 
$$E_\bullet: 
\{\pmb{0}\}\subset E_{n}\subset E_{n-1}\subset \cdots
\subset E_{1}\subset V\cong\KK^{2n},\quad
\mathrm{dim}(E_{i})=n+1-i\quad(1\leq i\leq n),
$$
where $E_i$ are all isotropic.  
In particular, $E_1$ is a Lagrangian subspace, i.e., a maximal isotropic subspace of $V$.  The variety $Fl^C(V) = Sp_{2n}/B$ is identified with the set of all isotropic flags.

The unique $B_-$-fixed isotropic flag is given by $F_q = \mathrm{span}\{\e_n,\ldots,\e_{q}\}$ for $1\leq q\leq n$.  This is extended to a complete flag by setting
\[
F_{\bar q} = F_{q+1}^\perp = \mathrm{span}\{ \e_n,\ldots,\e_1,\e_{\bar1},\ldots,\e_{\bar{q}} \}
\]
for $1\leq q\leq n$.

Given $w\in W_n$, the Schubert variety is described as
$$\Omega_w=\{E_\bullet\;|\;\mathrm{dim}(E_p\cap F_q)\geq k_w(q,\bar{p})\;\text{for all}\;  p \in \{1,\ldots,n\},\, q\in \{\bar{n},\ldots,\bar{1},1,\ldots,n\} \},$$ 
where the function $k_w$ associated to $w$ is defined by
\[
  k_w(q,\bar{p})=\#\{s \;|\;s\leq \bar{p},\;w(s) \geq  q\}.
\]

As in type A, the Schubert variety can also be written
\[
\Omega_w=\{E_\bullet\;|\;\mathrm{rank}(E_p\rightarrow V/F_q)\leq r_w(q,\bar{p})\}
\]
for all $1\leq p\leq n$ and $q\in \{\bar{n},\ldots,\bar{1},1,\ldots,n\}$.  
Here
\begin{align*}
r_w(q,\bar{p})&=\#\{s\in \{\bar{n},\ldots,\bar{1}\} \;|\;s\leq \bar{p},\;w(s) < q\} \\
 &=n+1-p-k_w(q,p).
 \end{align*}  

We write $LG(V)$ for the Lagrangian Grassmannian parametrizing $n$-dimensional isotropic subspaces of $V$. The index set for the Schubert varieties and $T$-fixed points is the set of strict partitions
$\lambda=(\lambda_1,\ldots,\lambda_n)$
satisfying 
\begin{equation}\label{lamcon}
n\geq \lambda_1> \cdots> \lambda_r>0,\;
\mbox{and}\;
\lambda_i=0
\;\mbox{for}\;r<i\leq n
\end{equation}
for some $r\leq n$.  Equivalently, $\lambda$ is an $r$-element subset of $\{1,\ldots,n\}$.   We denote by $r=\ell(\lambda)$, the length of $\lambda$. 
Given such a $\lambda$, the corresponding
Schubert variety is 
\begin{equation}
\Omega_\lambda=\{
W\in LG(V)\;|\; \dim(W\cap F_{\lambda_i})\geq i
\;
\mbox{for}\;
1\leq i\leq \ell(\lambda)
\}, \label{eq:OmegaForLG}
\end{equation}
of codimension $|\lambda|$.

Thinking of $\lambda$ as a subset of $\{1,\ldots,n\}$, the fixed point $\fp_\lambda$ corresponds to the isotropic subspace spanned by $\e_i$ for $i \in \lambda$, together with $\e_{\bar\imath}$ for $i\not\in\lambda$.  For example, $\fp_\emptyset = \mathrm{span}\{ \e_{\bar{n}},\ldots,\e_{\bar{1}}\}$, and $\fp_{(n,\ldots,2,1)} = F_1 = \mathrm{span}\{ \e_1,\ldots,\e_n\}$.  The Schubert variety $\Omega_\lambda$ is the closure of the Schubert cell $\Omega_\lambda^\circ=B_{-}\cdot \fp_\lambda$.

Each strict partition $\lambda=(\lambda_1,\ldots,\lambda_r)$ corresponds to a {\it Grassmannian signed permutation} 
$$w_\lambda=\bar{\lambda}_1\cdots\bar{\lambda}_rj_1\cdots j_{n-r},$$
where $\{j_1<\cdots<j_{n-r}\} = [n]\backslash \{\lambda_1,\ldots,\lambda_r\}$. See, e.g., \cite[Section 3.4]{IMN}.

Let $\pi\colon Fl^C(V) \to LG(V)$ be the projection sending an isotropic flag $E_\bullet$ to the maximal isotropic subspace $E_1$.  As for type A, this is a $G$-equivariant fiber bundle, where each fiber is smooth and the fiber over $E_1$ is isomorphic to the flag variety $Fl(E_1)$.  We have
$\pi^{-1}\Omega_\lambda = \Omega_{w_\lambda}$, and $\Omega_{w_\lambda}$ is a fiber bundle over the Schubert variety $\Omega_\lambda$.

\subsection*{Type B}

Let $V$ be a vector space of dimension $2n+1$,
with basis $\e_{\bar n},\ldots,\e_{\bar 1},\e_0,\e_1,\ldots,\e_{n}$. 
We have a symmetric bilinear form on $V$, defined by 
$\langle\e_i,\e_j\rangle=\langle\e_{\bar i},\e_{\bar j}\rangle
=\langle\e_0,\e_j\rangle
=\langle\e_0,\e_{\bar j}\rangle=0$, $\langle\e_i,\e_{\bar j}\rangle
=\delta_{i,j}$
for $1\leq i,j\leq n$, and $\langle\e_0,\e_{0}\rangle=1.$  So its Gram matrix looks like this:
\[
\left[ \begin{array}{ccc|c|ccc}
 & & & & & & 1 \\
 & & & & & \iddots &\\
 & & & & 1 & &  \\ \hline
 & & & 1 & & & \\ \hline
 & & 1 & & & & \\
 & \iddots & & & & & \\ 
1 & & & & & & \\
\end{array} \right].
\]
Let $G=SO(V)=SO_{2n+1}$ be 
the special orthogonal group with respect to the form.
The Weyl group is the same as in type C, the group of signed permutations $W_n=S_n\ltimes \{\pm 1\}^n$.

The isotropic flag variety $Fl^B(V)=SO_{2n+1}/B$ consists of a sequence of linear subspaces 
\[
E_\bullet: 
\{\pmb{0}\}\subset E_{n}\subset E_{n-1}\subset \cdots
\subset E_{1}\subset V\cong\KK^{2n+1},\quad
\dim(E_{i})=n+1-i\quad(1\leq i\leq n),
\]
where $E_i$ are all isotropic.  
In particular,
$E_1$ is a maximal isotropic subspace of $V$.  
Let us define
\[
F_i=\mathrm{span}_{\KK}
\{\e_{i},\ldots,\e_n
\}\quad \mbox{for}\;1\leq i\leq n.
\]
Then $F_\bullet=(F_1\supset \cdots\supset F_n)$ is the unique  
isotropic flag fixed by $B_{-}$.  
Note that $F_1$ is a maximal isotropic subspace.

Inside the odd Orthogonal Grassmannian $OG(n,V)$ of $n$-dimensional isotropic subspaces of $V$, the Schubert variety associated with $\lambda$ is given by the same conditions (\ref{eq:OmegaForLG}) as in type C.  The correspondence between $T$-fixed points of $OG(n,V)$ and strict partitions inside the $n$-staircase is also the same as in type C.

%%%%%%%
\subsection*{Type D}
%%%%%%%

We consider a vector space $V$ of dimension $2n$
with a basis indexed by the set
$\{\bar{n},\ldots,\bar{1},1,\ldots, n\}$. 
Let us define a symmetric bilinear form on $V$ by 
$\langle\e_i,\e_j\rangle=\langle\e_{\bar i},\e_{\bar j}\rangle
=0$, $\langle\e_i,\e_{\bar j}\rangle
=\delta_{i,j}$
for $1\leq i,j\leq n$.  Its Gram matrix is
\[
\left[ \begin{array}{ccc|ccc}
 & & & & &  1 \\
 & & & &  \iddots &\\
 & & &  1 & &  \\ \hline
 & & 1 & & &  \\
 & \iddots & & & &  \\ 
1 & & & & &  \\
\end{array} \right].
\]
Let $G=SO(V)=SO_{2n}$ be 
the special orthogonal group with respect to the form.
The Weyl group is the subgroup $W_{n}^+ \subset W_{n}$ 
consisting of signed permutations with even number of sign changes.

Let $Fl^D(V)=SO_{2n}/B$.  Here an {\it isotropic flag\/} is a chain of subspaces 
\[
E_\bullet: 
\{\pmb{0}\}\subset E_{n-1}\subset \cdots \subset E_{1}
\subset E_{0}\subset V\cong\KK^{2n},
\]
with $\dim(E_{i})=n-i$ for $1\leq i\leq n$, where $E_{0}$ is a maximal isotropic subspace of $V$.  
We fix $F_\bullet$ given by 
$F_q=\mathrm{span}_{\KK}\{\e_{q+1},\ldots,\e_{n}\}$ for $0\leq q\leq n-1$
as the reference isotropic flag, extended to a complete flag by $F_{\bar{q}} = F_q^\perp$ for $1\leq q\leq n$.  (Here $F_n=0$.)

The type D flag variety $Fl^D(V)$ is the set of all isotropic flags in $V$, with the additional condition that $\dim(E_0 \cap F_0)$ is even.

Given $w\in W_n^+$, the corresponding Schubert variety in $\Omega_w \subset Fl^D(V)$ is the closure of the cell
\[
 \Omega_w^\circ = \big\{ E_\bullet \,|\, \dim(E_p\cap F_q) = k_w(q,\bar{p}) \text{ for } 0\leq p \leq n-1 \text{ and } q\in\{\bar{n-1},\ldots,\bar{0},0,\ldots,n-1\} \big\},
\]
where now $k_w(q,\bar{p}) = \#\{ s< \bar{p} \,|\, w(s)>q \}$.

The (even) orthogonal Grassmannian $OG^+(n, V)$ is the set of maximal isotropic subspaces $W$ in $V$ such that $\mathrm{dim}(W\cap F_0)$ is even.  
For a strict partition $\lambda$, the corresponding Schubert variety in $OG^+(n,V)$ is the closure of
\[
\Omega^\circ_\lambda=\{W\in OG^+(n,2n)
\;|\;
\dim(W\cap F_{\lambda_i}) = i\;\mbox{for}\;1\leq i\leq \ell(\lambda)
\}.
\]
Its codimension in $OG^+(n,2n)$ is $|\lambda|$.

Fixed points in $OG^+(n,2n)$ are parametrized as follows.  Let $\lambda=(\lambda_1>\cdots>\lambda_r\geq 0)$ be a strict partition.  We may ensure $r=\ell(\lambda)$ is even by including or omitting a $0$ part, as needed.  The fixed point $\fp_\lambda$ is the maximal isotropic subspace spanned by $\e_i$ for $i\in\{\lambda_1,\ldots,\lambda_r\}$ and $\e_{\bar{\imath}}$ for $i\in\{0,\ldots,n\}\setminus\{\lambda_1,\ldots,\lambda_r\}$.

For instance, we have a $T$-fixed point $\fp_\emptyset=\mathrm{span}\{\e_{\bar{n}},\ldots,\e_{\bar{1}}\}$.  If $n$ is even, then $\fp_{(n-1,\ldots,1)}=\fp_{(n-1,\ldots,1,0)}=\mathrm{span}\{\e_1, \e_2,\ldots,\e_n\}$; if $n$ is odd, then $\fp_{(n-1,\ldots,1)} = \mathrm{span}\{\e_{\bar{1}},\e_2,\ldots,\e_n\}$.

Given $\lambda=(\lambda_1>\cdots>\lambda_r\geq 0)$ with $r$ even, set $\lambda^+ = (\lambda_1+1>\cdots>\lambda_r+1)$.  The type D Grassmannian signed permutation associated to $\lambda$ is the type C one associated to $\lambda^+$, i.e.,
\[
 w_\lambda = \bar{\lambda_1+1} \cdots \bar{\lambda_r+1} \, j_1 \cdots j_{n-r},
\]
where $\{j_1<\cdots<j_{n-r}\} = [n]\setminus \{\lambda_1+1,\ldots,\lambda_r+1\}$.

%%%%%%%%%%%%%%%%%%%
\section{Vexillary (signed) permutations and Schubert varieties}\label{s.vexillary}
%%%%%%%%%%%%%%%%%%%

Here we review the notions we need related to vexillary permutations.  For type A, this is standard by now (see, e.g., \cite{F1,Mac}).  For other types, vexillary elements were first considered by Billey and Lam \cite{BLam}.  We use a variation introduced in \cite{AF12,AF18}, which is adapted to the geometry of Schubert varieties: in each type, a vexillary (signed) permutation $w$ corresponds to a triple $(\bk,\bp,\bq)$, which in turn records a set of {\it essential conditions} defining the Schubert variety $\Omega_w$.

We also recall the {\it shape} $\lambda$ of a vexillary element $w$.  What is new here is the notion of an {\it outer shape} $\mu \supseteq \lambda$ associated to any $v\geq w$.  (In type A, this turns out to recover notation used by Li and Yong \cite{LY}.)

\subsection{Type A}
For $w\in S_n$, there is a subset $\Ess(w)$ of the grid $[n]\times [n]$ of boxes such that
 \begin{equation}
\Omega_w=\{E_\bullet\;|\;
\dim(E_{p}\cap F^q)\geq k_w(q,p)
\;\mbox{for all}\;
(q,p)\in \Ess(w)
\}.\label{eq:OmegaDefEss}
\end{equation}
$\Ess(w)$ is called the {\it essential set\/} of $w$, which is defined as follows.  
The {\it Rothe diagram\/} $D(w)$ of $w$ is a collection of
boxes $(q,p)\in [n]\times [n]$ (arranged as in the matrix indices) 
defined by
\[
D(w)=\{(i,j)\;|\;
i< w(j),\;
j<w^{-1}(i)
\}.
\]  
One way to obtain $D(w)$ is to put dots in the boxes $(w(j),j)$ for $1\leq j\leq n$, and then remove (or shade) all the 
boxes which are south or east of a dot.  
The set of remaining (unshaded) boxes is $D(w).$  Then
$\Ess(w)$ is the set of south-east corners of $D(w)$.
For example, take $w=1\,3\,5\,7\,4\,2\,6${ (which is vexillary)}.  As shown in the following figure, $D(w)$ is the set of unshaded boxes, and $\Ess(w)$ is the set of boxes with stars.  

\[
\Tableau{
\gray\bullet&\gray&\gray&\gray&\gray&\gray&\gray\\
\gray&&&&\star&\gray\bullet&\gray\\
\gray&\gray\bullet&\gray&\gray&\gray&\gray&\gray\\
\gray&\gray&&\star&\gray\bullet&\gray&\gray\\
\gray&\gray&\gray\bullet&\gray&\gray&\gray&\gray\\
\gray&\gray&\gray&\star&\gray&\gray&\gray\bullet\\
\gray&\gray&\gray&\gray\bullet&\gray&\gray&\gray\\
}
\]

A permutation $w$ can be recovered by knowing $\Ess(w)$ and the values of
dimension function $k_w$ at all $(q,p)\in \Ess(w)$ (\cite[Lemma 3.10 (a)]{F1}).  We also remark that $w\leq v$ if and only if $k_w(q,p)\leq k_v(q,p)$ for all $(q,p)\in \Ess(w)$.

\subsubsection{Vexillary permutations}

A permutation $w$ is {\it vexillary\/} if the boxes in $\Ess(w)$ can be ordered $\mathfrak{e}_1,\ldots,\mathfrak{e}_s$, proceeding 
(weakly) from south-west to north-east --- that is, if we set $\mathfrak{e}_i=(q_i,p_i)$, then we have 
\begin{equation}
1\leq p_1 \leq \cdots \leq p_s\leq n\quad \mbox{and}\quad n\geq q_1 \geq \cdots \geq q_s\geq 1.
\label{eq:triple1}
\end{equation}
Let $k_i=k_w(\mathfrak{e}_i)\;(1\leq i\leq s).$ Then \eqref{eq:OmegaDefEss} reads
\begin{equation}\label{eq:Omgea_wAtriple}
 \Omega_w = \{ E_\bullet \,|\, \dim(E_{{p}_i} \cap F^{q_i}) \geq k_i \text{ for }i=1,\ldots,s\}.
\end{equation}
 For the above example,
 we have $\mathfrak{e}_1=(6,4),\;
 \mathfrak{e}_1=(4,4),\;
 \mathfrak{e}_1=(2,5)$, and
\[
 \pmb{p}=(4,4,5),\quad
 \pmb{q}=(6,4,2),\quad
 \pmb{k}=(1,2,4).
\]
It is known \cite{AF12} that we have
\begin{eqnarray}
&&0\leq k_1<\cdots<k_s\leq n,\label{eq:triple_k}\\
&&q_i-p_i+k_i>q_{i+1}-p_{i+1}+k_{i+1}
\;\mbox{for}\;1\leq i\leq s-1,\;
\mbox{and}\;
q_{s}-p_{s}+k_{s}>0.\label{eq:triple2}
\end{eqnarray}
If, on the other hand, we have integer vectors 
$\pmb{k},\pmb{p},\pmb{q}$
satisfying \eqref{eq:triple1}, \eqref{eq:triple_k}, \eqref{eq:triple2},
we call $\tau=(\pmb{k},\pmb{p},\pmb{q})$ a {\it triple\/}.
For each triple $\tau$, there is a unique vexillary 
permutation $w$ (see \cite{AF12,AF18}).
We may use the notation $w(\tau)$ to indicate the vexillary permutation corresponding to
a triple $\tau$. 

Let $S_n^\#$ denote the set of all vexillary permutations in $S_n$.  
These can be characterized in terms of $\Ess(w)$, and recovered from the restriction of dimension function to $\Ess(w)$, as follows.  
A permutation $w$ is vexillary if and only if the boxes in $\Ess(w)$ can be ordered $\mathfrak{e}_1,\ldots,\mathfrak{e}_s$, proceeding 
(weakly) from south-west to north-east---that is, $\mathfrak{e}_i=(q_i,p_i)$, with $p_1 \leq \cdots \leq p_s$ and $q_1 \geq \cdots \geq q_s$.
Let $k_i:=k_w(q_i,p_i)$, for $1\leq i\leq s$.  Then $w$ is the unique minimal permutation (in Bruhat order) which has $k_i$ dots strictly south and weakly west of box $\mathfrak{e}_i$, for each $i$.  
 For the above example,
 we have $\mathfrak{e}_1=(6,4),\;
 \mathfrak{e}_1=(4,4),\;
 \mathfrak{e}_1=(2,5)$, and
\[
 \pmb{p}=(4,4,5),\quad
 \pmb{q}=(6,4,2),\quad
 \pmb{k}=(1,2,4).
\]

\subsubsection{Shape of a vexillary permutation}
For each vexillary $w(\tau)$, there is an associated partition $\lambda$, the smallest partition such that 
\[
\lambda_{k_i}=q_i-p_i+k_i\quad(1\leq i\leq s).
\]

The shape $\lambda$ can also be obtained from the diagram.  If we set $r_i=r_w(\mathfrak{e}_i)$, we have $r_i=p_i-k_i$, and the partition $\lambda=\mathrm{sh}(w)$ is obtained as follows: We move each $\mathfrak{e}_i$ diagonally north-west by $r_i$ units, denoted by $\mathfrak{e}_i'$.  Next we make a Young diagram fitting at north-west corner of the $n\times n$ grid having $\mathfrak{e}_i'$'s as corners.  Then $\lambda$ is the transpose (conjugate) of the partition.  For example, we have $\mathrm{sh}(w)=(3,2,1,1)$.
\[
\Tableau{
\bullet&&&\mathfrak{e}_3'&\gray&\gray&\gray\\
&\mathfrak{e}_2'&\gray&\gray&\gray\star&\gray\bullet&\gray\\
\mathfrak{e}_1'&\gray\bullet&\gray&\gray&\gray&\gray&\gray\\
\gray&\gray&\gray&\gray\star&\gray\bullet&\gray&\gray\\
\gray&\gray&\gray\bullet&\gray&\gray&\gray&\gray\\
\gray&\gray&\gray&\gray\star&\gray&\gray&\gray\bullet\\
\gray&\gray&\gray&\gray\bullet&\gray&\gray&\gray\\
}
\]

\subsubsection{Outer shape}

Let $w$ be a vexillary permutation, and let $v$ be a permutation with $w\leq v$.  For each essential box $\esse$ in $\Ess(w)$, let $\esse'$ be the box obtained by moving $\esse$ diagonally north-west by $r_v(\mathfrak{e})$ units.  The {\it outer shape} $\mu$ is the transpose of the smallest Young diagram sitting at north-west corner of the $n\times n$ grid and containing all the boxes $\esse'$.  
For example, consider $w=5\,4\,6\,1\,7\,2\,8\,3$ and  $v=8\,7\,6\,5\,4\,3\,2\,1\geq w$. Then we obtain the outer shape $\mu=(4,3,3,3,3)$ with the diagram below.  
(Since $r_v(\esse_1)=r_v(\esse_2)=r_v(\esse_3)=0$, these boxes do not move; and $r_v(\esse_4)=2$, so this box moved $2$ units NW to $\esse'_4$.)

\[
\Tableau{
\gray&\gray&\gray&\gray&\gray\mathfrak{e}_4'&&&\bullet\\
\gray&\gray&\gray&\gray&\gray&&\bullet&\\
\gray&\gray&\gray\mathfrak{e}_2&\gray&\gray\mathfrak{e}_3&\bullet&\mathfrak{e}_4&\\
\gray\mathfrak{e}_1&&&&\bullet&&&\\
&&&\bullet&&&&\\
&&\bullet&&&&&\\
&\bullet&&&&&&\\
\bullet&&&&&&&\\
}
\]

The outer shape for $w\leq v$ can also be obtained from a {\it weak triple} $\triple'=(\bk',\bp,\bq)$ associated to the pair $(w,v)$.  Since $w$ is vexillary, it has a corresponding triple $\triple = (\bk,\bp,\bq)$.  Then $\triple'$ is formed by setting $k'_i = k_v(q_i,p_i)$.  (If $k'_i = k'_{i+1}$, which is possible, then the coming from $(k'_i,p_i,q_i)$ dominates; this is the stronger condition.)  Since $v\geq w$, we have $k'_i\geq k_i$ for each $i$.  The partition $\mu$ is the one associated to $\triple'$ by the previous formula: $\mu_{k'_i} = q_i-p_i+k'_i$, with other parts filled in minimally.  (In case $k'_i = k'_{i+1}$, one uses $\mu_{k'_i} = q_i-p_i+k'_i$, as this will be the larger of the two possibilities.)

\begin{remark}
It was shown in \cite{LY} that there is a unique vexillary $\Theta_{v,w}$ such that a subset of $\mathfrak{e}'$'s obtained by moving $\mathfrak{e}\in \Ess(w)$ form the essential boxes of $\Theta_{v,w}$.  In fact, this vexillary permutation can be constructed from the weak triple $\triple'$, by a procedure similar to the one constructing $w$ from $\triple$.  For our example, we have $\Theta_{v,w}=5\,4\,6\,2\,7\,1\,3\,8$.
\end{remark}

\subsection{Type C}
Let $W_n^\# = S_{2n}^\# \cap W_n$ be the set of vexillary signed permutations. For $w\in W_n^\#$, we have a subset
$\Ess^{-}(w)\subset \{\bar{n},\ldots,\bar{1},1,\ldots,n\}\times \{\bar{n},\ldots,\bar{1}\}$
such that the Schubert variety is given by
 \begin{equation}
\Omega_w=\{E_\bullet\;|\;
\dim(E_{p}\cap F_q)\geq k_w(q,p)
\;\mbox{for all}\;
(q,p)\in \Ess^-(w)
\}.\label{eq:OmegaDefEss_C}
\end{equation}
An element 
$\mathfrak{e}\in  \{\bar{n},\ldots,\bar{1},1,\ldots,n\}\times \{\bar{n},\ldots,\bar{1}\}$ 
is an \defin{essential box} of  
$w$ if 
$\mathfrak{e}$ is a south-east corner of $D^{-}(w):=D(w)\cap( \{\bar{n},\ldots,\bar{1},1,\ldots,n\}\times \{\bar{n},\ldots,\bar{1}\}) $, and not in the set 
 $\{(i,\overline{1})\;|\;i< \overline{1}\}$.
Let $\Ess^{-}(w)$ be the set of essential boxes of $w$.\footnote{For general signed permutations, the definition of essential set is slightly more complicated \cite{A18}. Here we use a simplified version which is valid in the vexillary case (see \cite[p.~8, lines~16--17]{AF18}).}

The following example shows the essential position for a signed vexillary permutation $w=\bar{1}\;\bar{2}\;3\;\bar{5}\;\bar{4}$, with diagram shown below.
\[
\Tableau{\bl&\bl\bar{5}&\bl\bar{4}&\bl\bar{3}&\bl\bar{2}&\bl\bar{1} \\
\bl\bar{5}&&&&&\\
\bl\bar{4}&&&&&\\
\bl\bar{3}&&&\gray\bullet&\gray&\gray\\
\bl\bar{2}&&&\gray&&\\
\bl\bar{1}&&&\gray&&\mathfrak{e}_3\\
\bl1&&&\gray&\mathfrak{e}_2&\gray\bullet\\
\bl2&&&\gray&\gray\bullet&\gray\\
\bl3&&\mathfrak{e}_1&\gray&\gray&\gray\\
\bl4&\gray\bullet&\gray&\gray&\gray&\gray\\
\bl5&\gray&\gray\bullet&\gray&\gray&\gray}
\]
We note that the position $(\bar{4},\bar{1})$ is not an essential box.

\subsubsection{Vexillary signed permutations}
Let $\mathfrak{e}_1,\ldots,\mathfrak{e}_s$
be the elements in $\Ess^{-}(w)$ arranged from 
south-west to north-east.  
Let us define a triple $(\pmb{k},\pmb{p},\pmb{q})$
of integer sequence associated with $w$ 
by 
\[
k_i=k_w(\mathfrak{e}_i),\quad
p_i=-(\mbox{column index of}\;\mathfrak{e}_i),\quad
q_i=(\mbox{row index of the box just below}\;\mathfrak{e}_i).
\]
Then we have $1\leq p_i,q_i\leq n$ (see \cite{A18,AF18}).
Here we adopt a different convention from the one for type A in \S 3.1.
The Schubert variety \eqref{eq:OmegaDefEss_C} associated to a vexillary signed permutation $w$ 
can be defined by
\begin{equation}
 \Omega_w = \{ E_\bullet \,|\, \dim(E_{p_i} \cap F_{q_i}) \geq k_i \text{ for }i=1,\ldots,s\}.\label{eq:Omgea_wCtriple}
\end{equation}
The triple $\tau=(\pmb{k},\pmb{p},\pmb{q})$ of integer sequences
satisfies
\begin{equation}
  \bk\colon  0 < k_1 <\cdots< k_s, \quad
  \bp\colon n\geq p_1\geq \cdots \geq p_s \geq 1, \text{\quad and\quad }\\
  \bq\colon n\geq q_1 \geq \cdots \geq q_s \geq 1,\label{eq:triple3}
\end{equation}
and
\begin{equation}
q_i+p_i+k_i>q_{i+1}+p_{i+1}+k_{i+1}
\;\mbox{for}\;1\leq i\leq s-1,\;
\mbox{and}\;
q_{s}+p_{s}+k_{s}>0.
\label{eq:triple4}
\end{equation}
One can recover a vexillary (signed) permutation $w:=w(\tau)$ in $W_n^\#$ from a triple $\tau$; see \cite{AF12,AF18}.

\subsubsection{Shape of a signed vexillary permutation}
For each vexillary signed permutation $w$, $\lambda:=\lambda(\tau)$
is the smallest strict partition such that 
\[
\lambda_{k_i}=q_i+p_i-1\quad(1\leq i\leq s).
\]
In other words, the strict partition $\lambda(\triple) = (\lambda_1 > \cdots > \lambda_{k_s} >0)$ has parts $\lambda_k = p_i+q_i-1+k_i-k$ whenever $k_{i-1}<k\leq k_i$.  (We use the convention $k_0=0$.)

A graphical construction is given as follows. If we set $r_i=r_w(\mathfrak{e}_i)$, 
the partition $\lambda=\mathrm{sh}(w)$ is obtained as follows:
We move each $\mathfrak{e}_i$ diagonally north-west by $r_i$ units,
denoted by $\mathfrak{e}_i'$. Next 
we make a Young diagram $\tilde{\lambda}$ fitting at north-west corner of the $n\times n$ grid 
having 
$\mathfrak{e}_i'$'s as corners (as in type A). 
Then we remove boxes strictly upper to the diagonal from the Young diagram  $\tilde{\lambda}$.  The desired strict partition is the transpose of this along the diagonal.

The vexillary element
$w=\bar{1}\;\bar{2}\;3\;\bar{5}\;\bar{4}$ has shape $\lambda=(8,7,3,1)$, which can be seen in Figure \ref{Shape_C}.

\begin{multicols}{2}
\centering
{\Tableau{
 \bl&\bl\bar{5}&\bl\bar{4}&\bl\bar{3}&\bl\bar{2}&\bl\bar{1} \\
\bl\bar{5}&\gray&\times&\times&\times&\times\\
\bl\bar{4}&\gray&\gray&\times&\times&\times\\
\bl\bar{3}&\gray&\gray&\gray\bullet&\times&\times\\
\bl\bar{2}&\gray&\gray&\gray&\gray\mathfrak{e}_3'&\times\\
\bl\bar{1}&\gray&\gray&\gray\mathfrak{e}_2'&&\mathfrak{e}_3\\
\bl1&\gray&\gray&&\mathfrak{e}_2&\bullet\\
\bl2&\gray&\gray&&\bullet&\\
\bl3&\gray&\gray\mathfrak{e}_1&&&\\
\bl4&\bullet&&&&\\
\bl5&&\bullet&&&}
\captionof{figure}{Shape of $w=\bar{1}\;\bar{2}\;3\;\bar{5}\;\bar{4}$}\label{Shape_C}}
{\centering
 \Tableau{
 \bl&\bl\bar{5}&\bl\bar{4}&\bl\bar{3}&\bl\bar{2}&\bl\bar{1} \\
\bl\bar{5}&\gray&\times&\times&\times&\times\\
\bl\bar{4}&\gray&\gray&\times&\times&\times\\
\bl\bar{3}&\gray&\gray&\gray&\times&\times\\
\bl\bar{2}&\gray&\gray&\gray&\gray\mathfrak{e}_3'&\times\\
\bl\bar{1}&\gray&\gray\bullet&\gray\mathfrak{e}_2'&&\mathfrak{e}_3\\
\bl1&\gray&&&\mathfrak{e}_2&\\
\bl2&\gray&&&&\bullet\\
\bl3&\gray&&&\bullet&\\
\bl4&\gray\mathfrak{e}_1&&\bullet&&\\
\bl5&\bullet&&&&}
\captionof{figure}{Outer shape}\label{outer_C}
}
\end{multicols}

\subsubsection{Outer shape}
Given a vexillary element $w$ and any $v\geq w$, the outer shape $\mu$
is the smallest shifted diagram containing all the $\mathfrak{e}'$ with $\mathfrak{e}\in \Ess(w)$.  For instance, given $w$ as above and $v=\bar{2}\;\bar{3}\;\bar{4}\;1\;\bar{5}$ in $W_5$, we move the essential boxes and take the smallest shifted diagram to get the outer shape $\mu=(9,4,3,1)$, in Figure \ref{outer_C}.

As in type A, there is a weak triple $\triple' = (\bk',\bp,\bq)$ associated to $w\leq v$, obtained by setting $k'_i = k_v(q_i,p_i)$.  The outer shape $\mu$ can be determined by setting $\mu_{k'_i} = p_i+q_i-1$, and filling in other parts minimally to obtain a strict partition.  (If $k'_i=k'_{i+1}$, we use the same convention as before, setting $\mu_{k'_i} = p_i+q_i-1$.)

%%%%
\subsection{Type D}
%%%%
We define $W_{n}^{+,\#}=W_{n}^+\cap S_{2n}^{\#}$, where $W_{n}^+$ is the index $2$ subgroup of $W_{n}$ consisting of signed permutations with an even number of sign changes. 

The vexillary signed permutation $w$ associated to $\tau$ in type D is the same as the vexillary permutation $w$ of type C after replacing $p_i$ by $p_i+1$ and $q_i$ by $q_{i}+1$. The Schubert variety $\Omega_w$ is described by the same conditions as in \eqref{eq:Omgea_wCtriple}.  A type D triple $\triple$ for the Schubert variety satisfies 
$$\bold{k}:0<k_1<\cdots<k_s,\;\;\bold{p}:p_1\geq\cdots\geq p_s\geq 0, \;\;\bold{q}:q_1\geq\cdots q_s\geq 0$$
of length $s$, with
\begin{equation}\label{Dtriple}
 p_i+q_i+k_i>p_{i+1}+q_{i+1}+k_{i+1}
 \end{equation}
for $1\leq i\leq s-1$ (see \cite{AF12}).  In particular, the triple $\triple$ is given by the same sequences in type C, but the last components of each sequence, that is, $p_s$ and $q_s$ can be $0$.  If $p_s=q_s=0$, then $k_s$ is required to be even.  If $k_s$ is odd, we replace the triple by one with $k_{s+1}=k_s+1$ and $p_{s+1}=q_{s+1}=0$.

\subsubsection{Shape of a signed vexillary permutation}
Given a vexillary signed permutation $w$ with a triple $\tau$, the smallest strict partition $\lambda:=\lambda(\tau)$ is defined by
$$\lambda_{k_i}=p_i+q_i \quad (1\leq i\leq s),$$
and $\lambda_k=\lambda_{k_i}+k_i-k$ for $k_{i-1}< k\leq k_i$ with the convention $k_0=0$.  Note that $\lambda_{k_s}=0$ if $p_s=q_s=0$.  Also if we set the strict partition $\lambda^{+}=\lambda(\tau^{+})$ defined by $\lambda^{+}=(\lambda_1+1,\ldots,\lambda_s+1),$ $\lambda^{+}$ becomes the strict partition for type C.

The shape of a vexillary permutation can also be seen from its diagram.  For example, let us take $w=3\;\bar{1}\;\bar{2}\;4\;5$. The type C shape is $\lambda^+=(4,2)$.  Then we remove the diagonal boxes from the shape $\lambda^+$ to obtain the shape $\lambda=(3,1)$ of type D, as in Figure \ref{Shape_B}. 

\begin{multicols}{2}
\centering
\Tableau{\bl&\bl\bar{5}&\bl\bar{4}&\bl\bar{3}&\bl\bar{2}&\bl\bar{1} \\
\bl\bar{5}&\bullet\times&\times&\times&\times&\times\\
\bl\bar{4}&\gray&\bullet\times&\times&\times&\times\\
\bl\bar{3}&\gray&\gray\mathfrak{e}_2'&\times&\times&\bullet\times\\
\bl\bar{2}&\gray\mathfrak{e}_1'&&&\times&\times\\
\bl\bar{1}&&&&\mathfrak{e}_2&\times\\
\bl1&&&\mathfrak{e}_1&\bullet&\\
\bl2&&&\bullet&&\\
\bl3&&&&&\\
\bl4&&&&&\\
\bl5&&&&&\\}
\captionof{figure}{Shape of $w=3\;\bar{1}\;\bar{2}\;4\;5$}\label{Shape_B}
{\centering
 \Tableau{\bl&\bl\bar{5}&\bl\bar{4}&\bl\bar{3}&\bl\bar{2}&\bl\bar{1} \\
\bl\bar{5}&\times&\times&\times&\times&\times\\
\bl\bar{4}&\gray&\times&\times&\times&\times\\
\bl\bar{3}&\gray&\gray&\times&\times&\bullet\times\\
\bl\bar{2}&\gray&\gray&\gray&\times&\times\\
\bl\bar{1}&\gray&\gray\mathfrak{e}_1'&\gray&\gray\mathfrak{e}_2&\times\\
\bl1&&\bullet&\mathfrak{e}_1&&\\
\bl2&&&&\bullet&\\
\bl3&&&&&\\
\bl4&&&\bullet&&\\
\bl5&\bullet&&&&\\}\\
\captionof{figure}{Outer shape}\label{outer_B}
}
\end{multicols}

\subsubsection{Outer shape}
Given a type D vexillary permutation $w$ with triple $\triple$, and $v\geq w$, the associated outer shape $\mu$ can be defined as before from the weak triple $\triple'$.  Alternatively, $\mu^+ = (\mu_1+1,\ldots,\mu_{k_s}+1)$ is the type C outer shape for the pair $w\leq v$. 

For example, consider
$w=3\;\bar{1}\;\bar{2}\;4\;5$ and $v= 3\;\bar{2}\;\bar{4}\;\bar{1}\;\bar{5}$, so $w$ is vexillary (as before) and $v\geq w$.  Then from Figure \ref{outer_B}, we get $\mu^+=(5,4,3,2)$. By removing the diagonal parts, we have the outer shape $\mu=(4,3,2,1)$.

%%%%   
\subsection{Type B}
%%%%
This is essentially the same as type C.  The Weyl group is $W=W_n$.  
We use the same parametrization of vexillary signed permutations by (type C) triples, and the description of $\Omega_w$ in terms of the triple looks the same as \eqref{eq:Omgea_wCtriple}.  The shape $\lambda(\triple)$ is the same as in type C, as is the outer shape $\mu$.

%%%%%%%%%%%%%%%%%%%
\section{The multiplicity formula}
%%%%%%%%%%%%%%%%%%%

Now we turn to our main theorem.  First, we recall the definition and basic properties of the multiplicity we are computing.

Let $X$ be an algebraic variety containing a point $p$.  Let $R=\mathcal{O}_{X,p}$ be the local ring of $X$ at $p$ with, maximal ideal $\mathfrak{m}$.  The \emph{Hilbert-Samuel polynomial} of $R$ is given by 
 $$\mathcal{P}_R(n)=\text{dim}_{\KK}(R/\mathfrak{m}^n)=(u/d!)\;n^d+\cdots$$  for sufficiently large $n$, where $d$ is the Krull dimension of $R$ and $u$ is a nonnegative integer.  The \emph{Hilbert-Samuel multiplicity} of $R$ is the leading coefficient: it is defined as
 $$\mathrm{mult}_p(X)=u.$$ 
The following properties hold:
\begin{enumerate}
\item If $U\subset X$ is a (Zariski or \'etale) neighborhood of $p$, then $\mult_p(X)=\mult_p(U)$.

\item If $(X,p) \xrightarrow{\sim} (X',p')$ is an isomorphism, then $\mult_p(X) = \mult_{p'}(X')$.

\item For any affine space $\mathbb{A}^n$ with origin $0$, we have $\mult_p(X) = \mult_{(p,0)}(X\times\mathbb{A}^n)$.
\end{enumerate}
These properties may be expressed concisely as follows: if $X' \to X$ is a smooth morphism, sending $p' \mapsto p$, then $\mult_{p'}(X') = \mult_p(X)$.

Next, we review the notion of {\it excited Young diagram}, following \cite{IN}.  Consider a pair (ordinary or shifted) Young diagrams $\lambda \subset \mu$.  An {\it excitation} of $\lambda$ is a collection of boxes inside $\mu$ which are obtained from $\lambda$ by a sequence of {\it elementary excitations}.  These depend on type.  In type A, an elementary excitation is a local move of the form
\[
\Tableau{\gray&\\&}\leadsto\Tableau{&\\&\gray}\;.
\]
In type C, an elementary excitation is one of the following:
\[
\Tableau{\gray&\\~&}\leadsto\Tableau{&\\~&\gray}\;,\qquad
\Tableau{\gray&\\&}\leadsto\Tableau{&\\&\gray}\;.\\
\]
\vspace{0.2cm}
In types B and D, elementary excitations are of the form
\begin{equation}
\Tableau{\gray&\\&}\leadsto\Tableau{&\\&\gray}\;,\qquad\Tableau{\gray& &~\\~&&\\~&~&}\leadsto\Tableau{&&~\\~&&\\~&~&\gray}\;.\label{EQ}
\end{equation}
In each case, we write $\EYD_\mu(\lambda)$ for the set of excitations of $\lambda$ inside $\mu$, the type being understood from context.

For Schubert varieties in Grassmannians (ordinary, Lagrangian, or maximal orthogonal), we have the following formula for the multiplicity, due in this form to Ikeda-Naruse \cite[\S9]{IN}:
\begin{equation}\label{e.GrMult}
 \mult_{\fp_\mu}\Omega_\lambda = \#\EYD_\mu(\lambda).
\end{equation}

Now we can state our main theorem.  Let $G$ be a classical group, so one of $SL_n$, $Sp_{2n}$, $SO_{2n+1}$, or $SO_{2n}$, with Weyl group $W$.  We consider Schubert varieties in the corresponding flag variety $G/B$ (which is one of $Fl(V)$, $Fl^C(V)$, $Fl^B(V)$, or $Fl^D(V)$, for an appropriate vector space $V$).

\begin{theorem}\label{MainTh}
Let $w$ and $v$ be elements of $W$, with $w$ vexillary and $v\geq w$. Let $\lambda=\mathrm{sh}(w)$ be the shape of $w$, and let $\mu$ be the outer shape associated to $w$ and $v$.  Then the Hilbert-Samuel multiplicity of $\Omega_w$ at $\fp_v$ is given by the formula
\[
  \mult_{\fp_v}(\Omega_w)=\#\mathcal{E}_{\mu}(\lambda).
\]
\end{theorem}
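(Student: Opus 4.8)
The plan is to reduce the computation on the flag variety to the Grassmannian multiplicity formula \eqref{e.GrMult}, so that all the combinatorics is absorbed into the shape/outer-shape dictionary of \S\ref{s.vexillary}. Because multiplicity depends only on an \'etale neighborhood of the point and is preserved by the smooth-morphism property (properties (1)--(3)), the first move is to replace $\Omega_w$ by its \emph{Kazhdan-Lusztig variety} $\mathcal{N}_{v,w} := \Omega_w \cap X_v^\circ$, an affine neighborhood of $\fp_v$ cut out inside the opposite cell $X_v^\circ = B\cdot\fp_v$. Thus $\mult_{\fp_v}\Omega_w = \mult_{\fp_v}\mathcal{N}_{v,w}$, and likewise $\mult_{\fp_\mu}\Omega_\lambda = \mult_{\fp_\mu}(\Omega_\lambda \cap X_\mu^\circ)$ on the Grassmannian side. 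Granting \eqref{e.GrMult}, it then suffices to exhibit an isomorphism of pointed varieties
$$\mathcal{N}_{v,w} \;\cong\; \bigl(\Omega_\lambda \cap X_\mu^\circ\bigr) \times \mathbb{A}^N,$$
carrying $\fp_v$ to $(\fp_\mu,0)$, for the Grassmannian $Gr(d,V\oplus V)$ of half-dimensional subspaces of the doubled space.

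The isomorphism is built from the direct-sum embedding of Anderson-Fulton. A point of $\mathcal{N}_{v,w}$ is a flag $E_\bullet$ in $V$ whose position relative to $F^\bullet$ is controlled, via \eqref{eq:Omgea_wAtriple}, by the finitely many staircase conditions $\dim(E_{p_i}\cap F^{q_i})\geq k_i$ at the essential boxes $\mathfrak{e}_i=(q_i,p_i)$. The key is that these several conditions can be encoded as a \emph{single} Grassmannian incidence condition once one passes to $V\oplus V$: one assembles the relevant pieces $E_{p_1},\ldots,E_{p_s}$ (together with the complementary data recorded by $\bp+\overline{\bq}$) into a half-dimensional subspace $\Phi(E_\bullet)\subset V\oplus V$, the doubled reference flag being arranged so that the staircase collapses onto the Schubert condition defining $\Omega_\lambda$. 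It is convenient to factor $\Phi$ as an embedding into $Fl(\bp+\overline{\bq},V\oplus V)$ followed by the projection to $Gr(d,V\oplus V)$; the latter is a smooth fiber bundle with $\Omega_{w_\lambda}=\pi^{-1}\Omega_\lambda$, so the smooth-morphism property lets one pass freely between the Grassmannian-permutation Schubert variety $\Omega_{w_\lambda}$ upstairs and $\Omega_\lambda$ downstairs. Concretely, I would fix unipotent coordinates on $X_v^\circ$, write a general flag of $\mathcal{N}_{v,w}$ in matrix form, and read off $\Phi$ in these coordinates.

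The technical heart---and the main obstacle---is verifying that $\Phi$ is genuinely an isomorphism of the relevant neighborhoods, up to the affine factor, rather than merely a morphism. This splits into two matchings. First, the fixed points must correspond: $\fp_v$ is governed by the weak triple $\triple'=(\bk',\bp,\bq)$ with $k'_i=k_v(q_i,p_i)$, and one must check that $\Phi(\fp_v)$ is exactly the Grassmannian fixed point $\fp_\mu$ indexed by the outer shape $\mu$---this is precisely the graphical construction of $\mu$ from $\triple'$ recorded in \S\ref{s.vexillary}. Second, the defining ideals must match: under $\Phi$, the system $\dim(E_{p_i}\cap F^{q_i})\geq k_i$ for $i=1,\ldots,s$ must be shown equivalent to the single system defining $\Omega_\lambda\cap X_\mu^\circ$, with the surviving coordinates freely parametrizing $\mathbb{A}^N$. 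Carrying this out is a coordinate computation matching flag-chart entries with Grassmannian chart entries, in which the translation from the triple $\triple$ to the shape $\lambda$ supplies the bookkeeping.

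Once the argument is complete in type A, the remaining classical types proceed in parallel: one equips $V\oplus V$ with the appropriate symplectic or symmetric form, replaces $Gr(d,V\oplus V)$ by the corresponding maximal isotropic Grassmannian ($LG$ or $OG^{\pm}$), and invokes the type-specific shape/outer-shape dictionary together with the isotropic direct-sum embedding. In each case the same chain $\mult_{\fp_v}\Omega_w=\mult_{\fp_\mu}\Omega_\lambda=\#\EYD_\mu(\lambda)$ yields the theorem, the last equality being \eqref{e.GrMult}.
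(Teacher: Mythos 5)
Your overall skeleton---reduce to Kazhdan--Lusztig varieties, apply the direct-sum embedding, and quote the Grassmannian formula \eqref{e.GrMult}---is the same as the paper's, but there is a genuine gap at the central reduction step, and it sits exactly where the argument is delicate. The direct-sum embedding does not produce ``a single half-dimensional subspace $\Phi(E_\bullet)$'' on which the staircase collapses to one Grassmannian incidence condition. It sends the partial flag $E_{p_1}\subset\cdots\subset E_{p_s}$ to the \emph{flag} $E_{p_1}\oplus F^{q_1}\subset\cdots\subset E_{p_s}\oplus F^{q_s}$ in $V\oplus V$, and the vexillary conditions become $\dim(G_{r_i}\cap\Delta)\geq k_i$ for all $i$: several members of the \emph{moving} flag are constrained against the \emph{single} fixed subspace $\Delta$. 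This is the Schubert variety of the \emph{inverse} Grassmannian permutation, $\Omega_{w_\lambda^{-1}}$, and such a locus is \emph{not} a pullback along the projection to the Grassmannian of half-dimensional subspaces: the fibration property $\Omega_{w_\lambda}=\pi^{-1}\Omega_\lambda$ that you invoke holds for $w_\lambda$ (one moving subspace against a fixed flag), not for $w_\lambda^{-1}$. So your proposed factorization of $\Phi$ through $Gr(d,V\oplus V)$, and with it the direct chain $\mult_{\fp_v}\Omega_w=\mult_{\fp_\mu}\Omega_\lambda$, does not go through as stated. The missing ingredient is the local isomorphism between $(\Omega_u,\fp_x)$ and $(\Omega_{u^{-1}},\fp_{x^{-1}})$---the paper's Lemma \ref{l.inverse-isom} (proved via the two projections from $Z(u)=\overline{G\cdot(\fp_e,\fp_u)}$) together with the remark following it---applied with $u=w_\lambda^{-1}$ and $x=w_\mu^{-1}$; only after this inversion does the smooth projection to the Grassmannian apply, yielding $\mult_{\fp_{w_\mu^{-1}}}\Omega_{w_\lambda^{-1}}=\mult_{\fp_{w_\mu}}\Omega_{w_\lambda}=\mult_{\fp_\mu}\Omega_\lambda$.

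A second, related inaccuracy is your ``first matching'': $\Sigma(\fp_v)$ is \emph{not} the torus-fixed point indexed by $\mu$ (nor by $w_\mu^{-1}$) once the reference flag is adapted to $\Delta$, and indeed $\Sigma(X_v^\circ)$ is not an opposite Schubert cell at all. What is true, and what the paper's Lemma \ref{l.KLisom} proves by the row-echelon bookkeeping your ``coordinate computation'' gestures at (labelled entries of types (a)/(b), with exactly $|\mu|$ surviving, Lemma \ref{l.typeAlabels} and Claim \ref{claimA2}), is that $\Sigma(X_v^\circ)=U\cdot g\cdot X_{w_\mu^{-1}}^\circ$ for some $g$ in the parabolic $Q$ stabilizing $\Delta$ and a unipotent $U\subset Q$; since $Q$ preserves $\Omega_{w_\lambda^{-1}}$, this yields $\Omega_w\cap X_v^\circ\cong(\Omega_{w_\lambda^{-1}}\cap X_{w_\mu^{-1}}^\circ)\times\mathbb{A}^{\ell(v)-|\mu|}$ (Theorem \ref{t.KLisom}). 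That computation can be carried out---but only with target $\Omega_{w_\lambda^{-1}}\cap X_{w_\mu^{-1}}^\circ$ in the big flag variety, not $\Omega_\lambda\cap X_\mu^\circ$ in the Grassmannian, as your proposal asserts. Notably, the paper itself records that the earlier FPSAC sketch of this step ``contains an incorrect assertion and a major gap''; your proposal reproduces essentially that pitfall, and repairing it requires both the inverse lemma and the $Q$-correction above.
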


We illustrate the theorem with examples in each type.

\begin{example}[Type A] 
Consider permutations $v=87654321\geq w=54617283$.  The shape of $w$ is $\lambda = (4,3,3,2,1)$ and the outer shape is $\mu = (4,3,3,3,3)$.  Then we have $\mult_{\fp_v}(\Omega_w)=2$, computed from the excited Young diagrams shown below.
\begin{align*}
\small\Tableau{
\gray&\gray&\gray&\gray&\gray\\
\gray&\gray&\gray&\gray&\\
\gray&\gray&\gray&&\\
\gray\\
}&\quad\quad
\small\Tableau{
\gray&\gray&\gray&\gray&\gray\\
\gray&\gray&\gray&&\\
\gray&\gray&\gray&&\gray\\
\gray
}
\end{align*}
More generally, in type A there are bijections between several combinatorial objects: flagged set-valued tableaux, pipe dreams, and excited Young diagrams (see \cite{LY}).  These sets are also enumerated by certain binomial determinants \cite{IN,Kra,LY}.
\end{example}

\begin{example}[Type C] 
Take $w=\bar{1}\;\bar{2}\;3\;4,\; v=\bar{2}\;\bar{3}\;\bar{4}\;1$.  We know that $w\leq v$ such that $\lambda=\mathrm{sh}(w)=(3,1)$ and $\mu=(4,3,1)$.  By the type C elementary excitations, we have $\mult_{\fp_v}(\Omega_w)=6$.

$$
\small\Tableau{
\gray&\gray&\gray&\\
~&\gray&&
\\~&~&}\;\quad
\small\Tableau{
\gray&\gray&&\\
~&\gray&&\gray
\\~&~&}\;\quad
\small\Tableau{
\gray&\gray&\gray&\\
~&&&
\\~&~&\gray}\;\quad
\small\Tableau{
\gray&\gray&&\\
~&&&\gray
\\~&~&\gray}\;\quad
\small\Tableau{
\gray&&&\\
~&&\gray&\gray
\\~&~&\gray}\;\quad
\small\Tableau{
&&&\\
~&\gray&\gray&\gray
\\~&~&\gray}\;
$$

\end{example}

\begin{example}[Type D]  Let $w=3\;\bar{1}\;\bar{2}\;4\;5$ and $v= 3\;\bar{2}\;\bar{4}\;\bar{1}\;\bar{5}$. We know $w\leq v$ in Bruhat order, so that $\fp_v\in \Omega_w$. Since $\lambda=(3,1)$ and $\mu=(4,3,2,1)$, by applying the theorem, we get $\mathrm{mult}_{\fp_v}(\Omega_w)=5$ with the following excited states. 

\begin{align*}
\small\Tableau{\gray&\gray&\gray&\\
~&\gray&&\\
~&~&&\\
~&~&~&}\quad\quad
\small\Tableau{\gray&\gray&&\\
~&\gray&&\gray\\
~&~&&
\\~&~&~&}&\quad\quad
\small\Tableau{\gray&\gray&\gray&\\
~&&&\\
~&~&&
\\~&~&~&\gray}
\quad\quad
\small\Tableau{\gray&\gray&&\\
~&&&\gray\\
~&~&&
\\~&~&~&\gray}\quad\quad
\small\Tableau{\gray&&&\\
~&&\gray&\gray\\
~&~&&\\
~&~&~&\gray}
\end{align*}
\end{example}

\begin{example}[Type B] 
Let us consider a vexillary permutation $w=\overline{1}\;\overline{2}\;3\;4$ with a fixed point $\fp_v$ in $\Omega_w$ for $v=\overline{2}\;\overline{3}\;\overline{4}\;1$. Since the inner shape is $(3,1)$ and the outer shape is $(4,3,1)$, by using Theorem \ref{MainTh} with type B excited Young diagrams, we get $\text{mult}_{\fp_v}(\Omega_w)=2$.

$$
\Tableau{
\gray&\gray&\gray&\\
~&\gray&&
\\~&~&}\;\quad
\Tableau{
\gray&\gray&&\\
~&\gray&&\gray
\\~&~&}$$

\end{example}

To prove Theorem~\ref{MainTh}, we reduce to the Grassmannian case, where the formula is given by \eqref{e.GrMult}.  The details occupy the remainder of the paper; here we give a brief outline.

In each type, the vexillary Schubert variety $\Omega_w \subset Fl(V)$ is the preimage of an analogously defined Schubert variety in a partial flag variety, $\Omega_{[w]} \subset Fl(\bp,V)$ (see \S\ref{s.dirsum} for the definition).  Using smooth invariance of multiplicities, we work with Schubert varieties in these partial flag varieties.  Here, we employ a direct sum embedding to map a vexillary Schubert variety $\Omega_w$ to an inverse-Grassmannian Schubert variety $\Omega_{w_\lambda^{-1}}$ in a larger (partial) flag variety.  This embedding sends $\fp_v$ to $\fp_{w_\mu^{-1}}$, up to an action by a group which preserves $\Omega_{w_\lambda^{-1}}$.

The main technical step is an isomorphism (up to product with affine space) between local neighborhoods of $\fp_v$ in $\Omega_w$ and of $\fp_{w_\mu^{-1}}$ in $\Omega_{w_\lambda^{-1}}$.  More precisely, we show the corresponding Kazhdan-Lusztig varieties, $X_v^\circ \cap \Omega_w$ and $X_{w_\mu^{-1}}^\circ \cap \Omega_{w_\lambda^{-1}}$, are isomorphic up to a product with affine space (Theorem~\ref{t.KLisom}).

At this point we have demonstrated $\mult_{\fp_v}\Omega_w = \mult_{\fp_{w_\mu^{-1}}}\Omega_{w_\lambda^{-1}}$.  To finish, we apply a general local isomorphism---valid for all Schubert varieties in any $G/B$---between $(\Omega_w, \fp_v)$ and $(\Omega_{w^{-1}},\fp_{v^{-1}})$, to conclude
\[
 \mult_{\fp_{w_\mu^{-1}}}\Omega_{w_\lambda^{-1}} = \mult_{\fp_{w_\mu}}\Omega_{w_\lambda} = \mult_{\fp_{\mu}}\Omega_{\lambda},
\]
using the smooth invariance of multiplicities (and the projection to the Grassmannian) in the last equality.  The theorem then follows from \eqref{e.GrMult}.

%%%%%%%%%%%%%%
\section{A local isomorphism}\label{s.locisom}
%%%%%%%%%%%%%%

We need a lemma which relates $w$ to $w^{-1}$.

\begin{lemma}\label{l.inverse-isom}
Let $X_w \subset G/B$ be a $B$-invariant (opposite) Schubert variety, with $\fp_v\in X_w$ a fixed point corresponding to $v\leq w$.  Then the local ring of $X_w$ at $\fp_v$ is isomorphic to the local ring of $X_{w^{-1}}$ at $\fp_{v^{-1}}$.
\end{lemma}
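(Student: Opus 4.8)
The plan is to exploit the transpose/inverse symmetry of the flag variety $G/B$ that exchanges the roles of rows and columns in the matrix description of Schubert varieties. Concretely, I would use the map on $G/B$ induced by the anti-automorphism $g \mapsto g^{-1}$ (or its transpose variant) of the group $G$. This map sends the identity coset to itself and, more importantly, carries the fixed point $\fp_v$ to $\fp_{v^{-1}}$, since the $T$-fixed points are indexed by $W$ and the inversion of the group descends to the inversion $v \mapsto v^{-1}$ on the Weyl group. The key observation is that while this inversion map is \emph{not} an automorphism of $G/B$ as a variety in general (it does not commute with the projection $G \to G/B$ on the nose), one can arrange a \emph{local} isomorphism near any fixed point by composing with a suitable translation.

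First I would set up the relevant local model. Recall that the opposite Schubert cell $X_v^\circ = B\cdot \fp_v$ is an affine space, and it provides an affine neighborhood of $\fp_v$ in which the Kazhdan-Lusztig variety $X_w \cap X_v^\circ$ lives as a closed subvariety cut out by explicit rank (or determinantal) conditions. In coordinates, a point of this cell is recorded by a matrix in ``echelon'' form determined by $v$, and the defining conditions for $X_w$ are rank conditions on certain north-west (or south-east) submatrices, governed by the rank function $r_w$. The crucial point is the identity $r_{w^{-1}}(q,p) = r_w(p,q)$ relating the rank function of $w$ to that of its inverse by swapping the two arguments --- this is the combinatorial shadow of matrix transposition.

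Next I would make the transpose explicit. The involution $g \mapsto g^{\mathsf t}$ (matrix transpose, suitably conjugated so that it preserves $B$ and $T$ in the chosen convention, or anti-transpose across the anti-diagonal to match the Gram-matrix conventions of the isotropic types) induces a map on the cell $X_v^\circ$ that carries it isomorphically onto $X_{v^{-1}}^\circ$, sending $\fp_v \mapsto \fp_{v^{-1}}$. Under this map the north-west rank conditions defining $X_w$ transform, via the transpose of submatrices, into the north-west rank conditions defining $X_{w^{-1}}$; this is exactly where the relation $r_{w^{-1}}(q,p)=r_w(p,q)$ is used. Since the map is an algebraic isomorphism of the two affine neighborhoods carrying one Kazhdan-Lusztig variety onto the other and matching the distinguished fixed points, it induces an isomorphism of local rings $\mathcal{O}_{X_w,\fp_v} \cong \mathcal{O}_{X_{w^{-1}},\fp_{v^{-1}}}$, which is the claim.

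I expect the main obstacle to be bookkeeping rather than conceptual: making the transpose (or anti-transpose) genuinely preserve the fixed Borel $B$ and torus $T$ in each classical type, so that it actually descends to a well-defined map on $G/B$ intertwining $\fp_v$ with $\fp_{v^{-1}}$ and $X_w$ with $X_{w^{-1}}$. In the isotropic cases (types B, C, D) the form must be respected, so the naive transpose has to be replaced by an anti-diagonal transpose compatible with the Gram matrices displayed earlier, and one must check that this is an automorphism of $G$ fixing $B$, $B_-$, and $T$ setwise and inducing $v\mapsto v^{-1}$ on $W$. Once that is verified uniformly, the comparison of defining rank conditions via $r_{w^{-1}}(q,p)=r_w(p,q)$ is routine, and the isomorphism of local rings follows. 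Because the statement only asserts an isomorphism of local rings (not a global variety automorphism), it suffices to produce the isomorphism on the affine neighborhoods $X_v^\circ$ and $X_{v^{-1}}^\circ$, which sidesteps any global obstruction.
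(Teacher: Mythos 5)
Your proposal is a genuinely different route from the paper's. The paper's proof is coordinate-free and uniform in $G$: it forms $Z(w)=\overline{G\cdot(\fp_e,\fp_w)}\subseteq G/B\times G/B$, notes that the two projections are locally trivial fiber bundles with $pr_1^{-1}(\fp_e)=X_w$ and $pr_2^{-1}(\fp_e)=X_{w^{-1}}$, and moves $(\fp_e,\fp_v)$ to $(\fp_{v^{-1}},\fp_e)$ by the diagonal action of $v^{-1}$; composing the two local product structures gives the isomorphism of local rings for any semisimple $G$ at once. Your transposition argument, by contrast, is the classical matrix symmetry of Kazhdan--Lusztig varieties (it is essentially how the isomorphism $\mathcal{N}_{v,w}\isom\mathcal{N}_{v^{-1},w^{-1}}$ is obtained in the Woo--Yong circle of ideas): it must be set up type by type, but when implemented correctly it buys something the paper's proof does not state explicitly, namely an explicit isomorphism of the slices themselves, with visible coordinates.

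Two points in your write-up are genuine errors, though fixable. First, the slice is wrong as written: since $X_w$ is $B$-invariant and closed and $v\leq w$, one has $X_v^\circ=B\cdot\fp_v\subseteq X_w$, so ``$X_w\cap X_v^\circ$'' is all of $X_v^\circ$ and carries no information. The Kazhdan--Lusztig variety must pair opposite groups: slice the $B$-invariant $X_w$ with the $B_-$-cell through $\fp_v$, or equivalently work with $\Omega_w\cap X_v^\circ$ as in \S7 of the paper. Second, transposition is an anti-automorphism, so it never induces a morphism $G/B\to G/B$; it descends only to $G/B\to B_-\backslash G$ (consistently with the remark after the lemma, no global map realizes $v\mapsto v^{-1}$ on all fixed points). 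The map must therefore be defined on canonical matrix representatives, and one must verify that the transpose of a canonical representative is again one. This verification succeeds exactly for the pairing above: representatives of $X_v^\circ$ have free entries above their column pivots and left of their row pivots, a configuration preserved by transposition, and the northwest rank conditions cutting out $\Omega_w$ pass to those cutting out $\Omega_{w^{-1}}$ via $r_{w^{-1}}(p,q)=r_w(q,p)$, yielding $\Omega_w\cap X_v^\circ\isom\Omega_{w^{-1}}\cap X_{v^{-1}}^\circ$. But if you transpose the normal forms of the $B_-$-cell instead, the free entries land above and to the \emph{right} of the pivots, where they are all erased by right multiplication by $B$: the induced map to $G/B$ collapses onto the single point $\fp_{v^{-1}}$, and that version of the argument fails. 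Having the $\Omega$-version, you still need the standard translations (left translation by $w_0$ and the identification $G/B\isom G/B_-$, precisely the bookkeeping in the paper's remark following the lemma) to reach the literal statement about $X_w$ and $X_{w^{-1}}$; and in types B, C, D your form-compatible anti-transpose induces $w\mapsto w_0w^{-1}w_0$ on $W$, which agrees with $w\mapsto w^{-1}$ when $w_0$ is central (types B, C, and D with $n$ even) but requires the extra identification when it is not. With these corrections, and the routine matching of affine factors using $\ell(v)=\ell(v^{-1})$, your argument goes through.
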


\begin{proof}
Consider the subvariety $Z(w) \subseteq G/B \times G/B$ defined by
\[
  Z(w) = \overline{G\cdot (\fp_{e}, \fp_w)}.
\]
Let $pr_1$ and $pr_2$ be the projections $Z(w) \to G/B$.  Then $pr_1^{-1}(\fp_{e}) = X_w$, while $pr_2^{-1}(\fp_{e}) = X_{w^{-1}}$.  Both of these are locally trivial fiber bundles, so there is a neighborhood $U_e \subset G/B$ such that $pr_1^{-1}(U_e) \isom U_e \times X_w$, and $pr_2^{-1}(U_e) \isom X_{w^{-1}} \times U_e$.  So, up to a product with affine space, we have local isomorphisms of $Z(w)$ at $(\fp_e,\fp_v)$ with $X_w$ at $\fp_v$, and of $Z(w)$ at $(\fp_{v^{-1}},\fp_e)$ with $X_{w^{-1}}$ at $\fp_{v^{-1}}$.  Multiplication by (a coset representative for) $v^{-1}$, diagonally on $G/B \times G/B$, defines an automorphism of $Z(w)$ which sends $(\fp_e,\fp_v)$ to $(\fp_{v^{-1}},\fp_e)$.  Composing these isomorphisms proves the lemma.
\end{proof}

\begin{remark}
It is not true, in general, that the Schubert varieties $X_w$ and $X_{w^{-1}}$ are (globally) isomorphic.  See, e.g., \cite{RS} for counterexamples and a criterion for global isomorphism. 
\end{remark}

\begin{remark}
Let $\Omega_w = \overline{B_-\cdot \fp_w} = w_\circ\cdot X_{w_\circ w}$, where $w_\circ$ is the longest element of $W$.  A direct translation of the lemma establishes a local isomorphism of $(\Omega_w,\fp_v)$ with $(\Omega_{w_\circ w^{-1} w_\circ}, \fp_{w_\circ v^{-1} w_\circ})$.  But using an isomorphism $G/B \xrightarrow{\sim} G/B_-$, one can identify $\Omega_w$ with $\Omega_{w_\circ w w_\circ}$, sending $\fp_v$ to $\fp_{w_\circ v w_\circ}$.  Applying this, we obtain a local isomorphism of $(\Omega_w,\fp_v)$ with $(\Omega_{w^{-1}},\fp_{v^{-1}})$.
\end{remark}

%%%%%%%%%%%%%%
\section{The direct sum embedding}\label{s.dirsum}
%%%%%%%%%%%%%%

Here we describe the direct sum embedding $\Sigma$, which we will use repeatedly in what follows.  For us, the key property is that a vexillary Schubert variety is the transverse intersection of the image of $\Sigma$ with an inverse-Grassmannian Schubert variety in the (larger) target flag variety.  The construction is similar in each type; while we spell out the details in each case, the reader is encouraged to focus on the type A case, which contains all the necessary information.

First, we make a general remark.  Let $P$ be a parabolic subgroup containing the Borel subgroup $B$ of $G$.  The projection $\pi\colon G/B \to G/P$ is a locally trivial fiber bundle, with smooth fibers isomorphic to $P/B$.  Schubert varieties and fixed points of $G/P$ are indexed by cosets $W/W_P$, where $W_P\subset W$ is the Weyl group of $P$.  The projection sends a point $wB$ to $\pi(wB) = [w]P$, where the coset $[w]$ is in $W/W_P$.  Any $w \in W$ can be uniquely decomposed into $w^{min}w_P$ for some $w_P\in W_P$, where $w^{min}$ is the minimal representative for $[w]$.  With this notation, we have $\pi^{-1}\Omega_{[w]} = \Omega_{w^{min}}$, so the restriction $\Omega_w\rightarrow \Omega_{[w]}$ is also a fiber bundle with smooth fibers.  In particular, the multiplicity of a point $p_{[v]} \in \Omega_{[w]}$ is the same as that of $p_{v} \in \Omega_{w^{min}}$.

\subsection*{Type A}

Let $w$ be a vexillary permutation in $S_n^\#$, with triple $(\bk,\bp,\bq)$.  The partial fixed flag $F^{q_1} \subset \cdots \subset F^{q_s} \subset V$ suffices to define the Schubert variety $\Omega_w$.  We also consider the partial flag variety $Fl(\bp;V)$ parametrizing flags $E_{p_1} \subset \cdots \subset E_{p_s} \subset V$, with the projection $\pi\colon Fl(V) \to Fl(\bp;V)$.  By the main fact about essential sets---and as noted in the previous paragraph---we have $\Omega_w = \pi^{-1}\Omega_{[w]}$, where $\Omega_{[w]} \subset Fl(\bp;V)$ is the subvariety defined by the same conditions, $\dim(E_{p_i} \cap F^{q_i}) \geq k_i$ for $1\leq i\leq s$.  This is because, by construction, a vexillary permutation $w$ coming from $(\bk,\bp,\bq)$ is minimal in its coset for the projection to $Fl(\bp;V)$.

Since $\pi$ is a smooth morphism, this reduces the study of singularities of $\Omega_w$ to those of $\Omega_{[w]}$.  (The analogous statements hold, for the same reasons, in other types.)

Given $\bp$ and $\bq$, let $r_i = p_i+n-q_i$, and write $\br = (r_1,\ldots,r_s)$.   
We have an embedding
\begin{equation}\label{e.typeAsum}
 \Sigma \colon Fl(\bp;V) \hookrightarrow Fl(\br; V\oplus V)
\end{equation}
defined by sending
\[
  E_{p_1} \subset \cdots \subset E_{p_s} \subset V \quad \text{to} \quad  E_{p_1}\oplus F^{q_1} \subset \cdots \subset E_{p_s} \oplus F^{q_s} \subset V \oplus V.
\]
We call this the {\it direct sum embedding}.

Let $\Delta = \{(v,v) \,|\, v\in V \} \subset V \oplus V$ be the diagonal subspace, and consider the locus
\[
  \Omega_{w_\lambda^{-1}} = \{ G_\bullet \, |\, \dim( G_{r_i} \cap \Delta ) \geq k_i \text{ for } 1\leq i\leq s \}
\]
in $Fl(V\oplus V)$.  The notation is intentional: using any fixed flag in $V\oplus V$ which contains $\Delta$ as its $n$-dimensional component, this locus is the Schubert variety associated to the inverse of the  Grassmannian permutation $w_\lambda \in S_{2n}$ of descent $n$, where $\lambda$ is the shape of $w$, so $\lambda_{k_i} = q_i-p_i+k_i$.  The Schubert variety $\Omega_{[w_\lambda^{-1}]} \subset Fl(\br; V\oplus V)$ is defined by the same conditions.  So $\Omega_{w_\lambda^{-1}} = \Pi^{-1}\Omega_{[w_\lambda^{-1}]}$, where $\Pi\colon Fl(V\oplus V) \to Fl(\br; V\oplus V)$ is the projection.

The key fact about the direct sum embedding is this: for the vexillary permutation $w$ with triple $(\bk,\bp,\bq)$ and shape $\lambda$,
\begin{equation}
  \Omega_{[w]} = \Sigma^{-1}\Omega_{[w_\lambda^{-1}]}
\end{equation}
in $Fl(\bp;V)$.  All this can be summarized by the diagrams of fiber squares:
\[
\begin{tikzcd}
  Fl(\bp;V) \ar[r,"\Sigma",hook]  & Fl(\br; V\oplus V)  \\
  \Omega_{[w]} \ar[u,hook] \ar[r,hook] & \Omega_{[w_\lambda^{-1}]} \ar[u,hook]
\end{tikzcd}
\]
\[
\begin{tikzcd}
  \Omega_w \ar[r,hook] \ar[d]  & Fl(V) \ar[d,"\pi"]  \\
  \Omega_{[w]}  \ar[r,hook] &  Fl(\bp;V)
\end{tikzcd}
\qquad
\begin{tikzcd}
  \Omega_{w_\lambda^{-1}} \ar[r,hook] \ar[d]  & Fl(V\oplus V) \ar[d,"\Pi"]  \\
  \Omega_{[w_\lambda^{-1}]}  \ar[r,hook] &  Fl(\br;V\oplus V).
\end{tikzcd}
\]

\subsection*{Type C}

Let $w$ be a vexillary signed permutation in $W_n^\#$, with (type C) triple $(\bk,\bp,\bq)$.  We use the notation $Fl^C(\bp,V)$ to denote the partial isotropic flag variety of subspaces $E_{p_1} \subset \cdots \subset E_{p_s} \subset V$, with $\dim E_{p_i} = n+1-p_i$, all isotropic with respect to the symplectic form $\omega$.  Similarly, we have the fixed isotropic partial flag $F_{q_1} \subset \cdots \subset F_{q_s} \subset V$.

On the vector space $V\oplus V$ there is a canonical symplectic form $\langle\!\langle\;,\;\rangle\!\rangle$, defined by 
\[
  \langle\!\langle v_1\oplus v_2, w_1\oplus w_2 \rangle\!\rangle = \langle v_1,w_1 \rangle - \langle v_2,w_2 \rangle,
\]
where $\langle\;,\;\rangle$ is the given symplectic form on $V$.  
This has the property that $E\oplus F \subset V\oplus V$ is isotropic whenever both $E,F \subset V$ are isotropic, and also that the diagonal subspace $\Delta\subset V\oplus V$ is isotropic.

The direct sum embedding is defined as before:
\[
 \Sigma\colon Fl^C(\bp;V) \hookrightarrow Fl^C(\br;V\oplus V)
\]
sends
\[
  E_{p_1} \subset \cdots \subset E_{p_s} \subset V \quad \text{to} \quad  E_{p_1}\oplus F_{q_1} \subset \cdots \subset E_{p_s} \oplus F_{q_s} \subset V \oplus V.
\]
Here $r_i = p_i+q_i-1$, and in accordance with our type C conventions, the subspace $G_{r_i} \subseteq V\oplus V$ has dimension $2n+1-r_i = (n+1-p_i)+(n+1-q_i)$.

Also as before, for a strict partition $\lambda$, we have the locus
\begin{equation}\label{locusC}
  \Omega_{w_\lambda^{-1}} = \{ G_\bullet \, |\, \dim( G_{r_i} \cap \Delta ) \geq k_i \text{ for } 1\leq i\leq s \},
\end{equation}
an inverse-Grassmannian Schubert variety in $Fl^C(V\oplus V)$, 
and
\begin{equation}
  \Omega_{[w]} = \Sigma^{-1}\Omega_{[w_\lambda^{-1}]}.
\end{equation}

The type C direct sum embedding is compatible with the type A one, requiring only notational changes.  That is, having fixed our isotropic flag $F_\bullet$, the diagram
\begin{equation}\label{e.dirsum-commute}
\begin{tikzcd}
  Fl(\tilde\bp;V) \ar[r,"\Sigma",hook]  & Fl(\tilde\br; V\oplus V)  \\
   Fl^C(\bp;V)  \ar[u,hook] \ar[r,"\Sigma",hook] & Fl^C(\br; V\oplus V)  \ar[u,hook]
\end{tikzcd}
\end{equation}
commutes, where $\tilde{p}_i = n+1-p_i$ and $\tilde{r}_i = 2n+1-r_i$ (so these index the dimensions of the subspaces parametrized by the partial flags).

Later it will be useful to embed isotropic flag varieties in type A flag varieties by remembering the {\it coisotropic} spaces as well, so $Fl^C(\bp;V) \hookrightarrow Fl(\tilde\bp;V)$ sends $E_{p_1} \subset \cdots \subset E_{p_s}$ to the flag $E_{p_1} \subset \cdots \subset E_{p_s} \subset E_{p_s}^\perp \subset \cdots \subset E_{p_1}^\perp$.  In this case, $\tilde{\bp} = (\tilde{p}_1,\ldots,\tilde{p}_s,\tilde{p}_{s+1},\ldots,\tilde{p}_{2s})$, where  $\tilde{p}_i = n+1-p_i$ for $1\leq i\leq s$, and  $\tilde{p}_i = n-1+p_{2s+1-i}$ for $s+1\leq i\leq 2s$.  Also extending the fixed flag $F_\bullet$ to include coisotropic spaces, this embedding is compatible with direct sum in the same way, as indicated by diagram \eqref{e.dirsum-commute}.

\subsection*{Type D}

The construction is exactly the same as type C, using the symmetric form $\langle\!\langle\;,\;\rangle\!\rangle$ on $V\oplus V$ defined by
\[
  \langle\!\langle v_1\oplus v_2, w_1\oplus w_2 \rangle\!\rangle = \langle v_1,w_1 \rangle - \langle v_2,w_2 \rangle,
\]
where $\langle\;,\;\rangle$ is the given symmetric form on $V$.

The direct sum embedding $\Sigma\colon Fl^D(\bp;V) \hookrightarrow Fl^D(\br;V\oplus V)$ works as for type C, where this time $r_i = p_i+q_i$.  We have the locus 
$\Omega_{[w_\lambda^{-1}]}$ in $Fl^D(\br;V\oplus V)$ as in \eqref{locusC}, with $\Omega_{[w]} = \Sigma^{-1}\Omega_{[w_\lambda^{-1}]}$.  And this embedding is compatible with the type A one, just as in the diagram \eqref{e.dirsum-commute}.

\subsection*{Type B}

Given an odd-dimensional vector space $V$ with symmetric bilinear form, and a vexillary permutation $w\in W_n^\#$ with (type B) triple $(\bk,\bp,\bq)$, the direct sum map is
\[
 \Sigma\colon Fl^B(\bp;V) \to Fl^D(\br;V\oplus V),
\]
where the symmetric form $\langle\!\langle\;,\;\rangle\!\rangle$ on $V\oplus V$ is defined as before.  Here $r_i = p_i+q_i-1$.  Note that, in contrast to types C and D, this takes a type B flag variety to a type D one. Similarly, we have the locus $\Omega_{[w_\lambda^{-1}]}$ inside $Fl^D(\br;V\oplus V)$, defined by the same conditions as in type C \eqref{locusC}.

%%%%%%%%%%%%%%
\section{Isomorphisms of Kazhdan-Lusztig varieties}\label{s.KLisom}
%%%%%%%%%%%%%%

Recall that $X_v^\circ$ denotes an opposite Schubert cell, the $B$-orbit of a fixed point $\fp_v$, so it is an affine space of dimension $\ell(v)$, and $\Omega_w$ is a Schubert variety, of codimension $\ell(w)$.

For any $v\in W$, an affine neighborhood of $\fp_v$ is given by $v\,\Omega_{\mathrm{id}}^\circ$.  To study the Schubert variety $\Omega_w$ locally at the
point $\fp_v$ ($w\leq v$), we only need to understand the affine variety $\Omega_w\cap v\,\Omega_{\mathrm{id}}^\circ$. 
However, as observed by Kazhdan and Lusztig \cite[Lemma A.4]{KL79}, 
there is an isomorphism
\[
\Omega_w\cap v\,\Omega_{\mathrm{id}}^\circ \cong
(\Omega_w\cap X_v^\circ)\times\mathbb{A}^{\dim (G/B)-\ell(v)}.
\]
So we study the affine variety $\Omega_w\cap X_v^\circ$, often called a {\it Kazhdan-Lusztig variety\/}.

In our setting, $w=w(\triple)$ is the vexillary (signed) permutation associated to a triple $\triple = (\bk,\bp,\bq)$, and $v\geq w$.  Recall that we defined a sequence $\bk' = (k_1' < \cdots < k_s')$ by setting $k'_i = k_v(p_i,q_i)$ for each $i$, obtaining a weak triple $\triple' = (\bk',\bp,\bq)$.  We have partitions $\lambda$ and $\mu$ associated to $\triple$ and $\triple'$, respectively; $\lambda$ is the shape of $w$, and $\mu$ is the outer shape of the pair $v\geq w$.  These partitions, in turn, have associated Grassmannian (signed) permutations $w_\lambda$ and $w_\mu$.

\begin{theorem}\label{t.KLisom}
With notation as above, so $w$ is vexillary and $v\geq w$, with corresponding partitions $\mu \supseteq \lambda$, we have an isomorphism
\[
  \Omega_w \cap X_v^\circ \isom (\Omega_{w_\lambda^{-1}} \cap X_{w_\mu^{-1}}^\circ) \times \mathbb{A}^{\ell(v)-|\mu|}.
\]
\end{theorem}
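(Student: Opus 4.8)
The plan is to chain together three isomorphisms: two that strip off fiber directions over the partial flag varieties $Fl(\bp;V)$ and $Fl(\br;V\oplus V)$, and one, supplied by the direct sum embedding $\Sigma$ of \S\ref{s.dirsum}, comparing the two resulting partial-flag Kazhdan--Lusztig varieties. I will argue uniformly in each type, with type A as the model and the obvious notational changes in the isotropic cases; the characteristic-$\neq 2$ hypothesis enters only when verifying the transversality in types B, C, D.

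First I reduce each side to the partial flag variety. Since $\pi\colon Fl(V)\to Fl(\bp;V)$ is a $B$-equivariant locally trivial bundle with $\Omega_w=\pi^{-1}\Omega_{[w]}$, and since $\pi$ carries the $B$-orbit $X_v^\circ$ onto $X_{[v]}^\circ$ as a trivial affine-space bundle, pulling back the defining conditions of $\Omega_{[w]}$ splits off an affine factor:
\[
 \Omega_w\cap X_v^\circ \isom (\Omega_{[w]}\cap X_{[v]}^\circ)\times \mathbb{A}^{a}, \qquad a=\ell(v)-\ell(v^{min}),
\]
where $v^{min}$ is the minimal coset representative of $[v]$. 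The projection $\Pi\colon Fl(V\oplus V)\to Fl(\br;V\oplus V)$ together with $\Omega_{w_\lambda^{-1}}=\Pi^{-1}\Omega_{[w_\lambda^{-1}]}$ gives the analogous splitting
\[
 \Omega_{w_\lambda^{-1}}\cap X_{w_\mu^{-1}}^\circ \isom (\Omega_{[w_\lambda^{-1}]}\cap X_{[w_\mu^{-1}]}^\circ)\times \mathbb{A}^{b}.
\]

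The heart of the argument is the comparison of the two partial-flag Kazhdan--Lusztig varieties via $\Sigma$. By \S\ref{s.dirsum}, $\Sigma$ is a closed embedding realizing $\Omega_{[w]}=\Sigma^{-1}\Omega_{[w_\lambda^{-1}]}$ as a transverse intersection of the smooth subvariety $Z:=\Sigma(Fl(\bp;V))$ with the Schubert variety $\Omega_{[w_\lambda^{-1}]}$; indeed, on the image of $\Sigma$ the condition $\dim(G_{r_i}\cap\Delta)\geq k_i$ becomes exactly $\dim(E_{p_i}\cap F^{q_i})\geq k_i$, since $G_{r_i}\cap\Delta=\{(v,v)\mid v\in E_{p_i}\cap F^{q_i}\}$. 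I first locate the base point: $\Sigma(\fp_{[v]})$ is a torus-fixed point, and I must produce a group element $h$ preserving $\Omega_{[w_\lambda^{-1}]}$ with $h\cdot\Sigma(\fp_{[v]})=\fp_{[w_\mu^{-1}]}$ and matching the source chart with the target chart, i.e. $h\,\Sigma(X_{[v]}^\circ)=Z'\cap X_{[w_\mu^{-1}]}^\circ$ where $Z'=h\cdot Z$. Granting this, I verify in the affine matrix coordinates on $X_{[w_\mu^{-1}]}^\circ$ that the ideal of $\Omega_{[w_\lambda^{-1}]}$ is generated by functions not involving the coordinates normal to $Z'$, so that locally near $\fp_{[w_\mu^{-1}]}$ the variety $\Omega_{[w_\lambda^{-1}]}$ is a product of its slice by $Z'$ with an affine space in those normal directions. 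This yields
\[
 \Omega_{[w_\lambda^{-1}]}\cap X_{[w_\mu^{-1}]}^\circ \isom (\Omega_{[w]}\cap X_{[v]}^\circ)\times \mathbb{A}^{c},\qquad c=\operatorname{codim} Z.
\]

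Chaining the three isomorphisms gives $\Omega_w\cap X_v^\circ\isom (\Omega_{w_\lambda^{-1}}\cap X_{w_\mu^{-1}}^\circ)\times\mathbb{A}^{a-b-c}$, and it remains only to identify the exponent. Comparing the known dimensions of the two Kazhdan--Lusztig varieties, namely $\ell(v)-\ell(w)$ and $|\mu|-|\lambda|$, and using $\ell(w)=|\lambda|$ for the vexillary shape, forces $a-b-c=\ell(v)-|\mu|$, as claimed. The main obstacle is the middle step: establishing the strong product form of transversality together with the explicit $h$ matching fixed points and charts. Concretely, the work is to write $\Sigma$ in coordinates and check that the determinantal (or, in the isotropic cases, Pfaffian) generators of the Schubert ideal split off the normal coordinates of $Z'$ freely; this is precisely the feature for which the direct sum construction was engineered, and it is where positive characteristic forces the restriction to characteristic $\neq 2$ in types B, C, and D.
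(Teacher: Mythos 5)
Your first reduction (projecting to $Fl(\bp;V)$ and splitting off $\mathbb{A}^{\ell(v)-\ell(v^{\min})}$) agrees with the paper's opening move, and your observation that $G_{r_i}\cap\Delta = \{(x,x)\mid x\in E_{p_i}\cap F^{q_i}\}$ on the image of $\Sigma$ is correct. But your middle step has a fatal direction error. The two partial-flag Kazhdan--Lusztig varieties have dimensions $\ell(v^{\min})-|\lambda|$ and $|\mu|-|\lambda|$, and one always has $\ell(v^{\min})\geq|\mu|$: among the $\ell(v^{\min})$ free entries of the matrix representatives of $X_{v^{\min}}^\circ$, exactly $|\mu|$ are labelled (Lemma~\ref{l.typeAlabels} and its type~C analogue, Lemma~\ref{l.typeClabels}), which is also why the theorem's exponent $\ell(v)-|\mu|$ is nonnegative. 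So the source-side slice is the \emph{larger} of the two varieties, and your claimed isomorphism $\Omega_{[w_\lambda^{-1}]}\cap X_{[w_\mu^{-1}]}^\circ \isom (\Omega_{[w]}\cap X_{[v]}^\circ)\times\mathbb{A}^{c}$ with $c=\operatorname{codim} Z>0$ would force $|\mu|-|\lambda|=\ell(v^{\min})-|\lambda|+c$, i.e.\ $c=|\mu|-\ell(v^{\min})\leq 0$, a contradiction. For the same reason the chart-matching identity $h\,\Sigma(X_{[v]}^\circ)=Z'\cap X_{[w_\mu^{-1}]}^\circ$ cannot hold: $\Sigma(X_{[v]}^\circ)$ has dimension $\ell(v^{\min})\geq|\mu|=\dim X_{[w_\mu^{-1}]}^\circ$, so the embedded cell does not fit inside the chart; the true containment is the reverse. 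Finally, even with correct exponents, your last step deduces the theorem by cancelling the common factor $\Omega_{[w]}\cap X_{[v]}^\circ$ from two product decompositions; cancellation of affine factors is not a valid inference, and ``comparing known dimensions'' fixes only the exponent, not the existence of the isomorphism.

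The paper's mechanism runs in the opposite direction and avoids all three problems. After reducing to $v=v^{\min}$, it proves (Lemma~\ref{l.KLisom}) that there exist $g$ in the parabolic $Q\subset\bar{G}$ stabilizing $\Delta$ and a unipotent subgroup $U\subset Q$ with $U\isom\mathbb{A}^{\ell(v)-|\mu|}$, such that multiplication $(u,x)\mapsto u\cdot g\cdot x$ gives an isomorphism $U\times X_{w_\mu^{-1}}^\circ\xrightarrow{\sim}\Sigma(X_v^\circ)$: the translated chart $g\cdot X_{w_\mu^{-1}}^\circ$ sits \emph{inside} the embedded cell, and its $U$-translates sweep the cell out. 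Since $U\subset Q$ preserves $\Omega_{w_\lambda^{-1}}$, intersecting immediately yields
\[
\Sigma(X_v^\circ)\cap\Omega_{w_\lambda^{-1}}\isom \bigl(X_{w_\mu^{-1}}^\circ\cap\Omega_{w_\lambda^{-1}}\bigr)\times\mathbb{A}^{\ell(v)-|\mu|},
\]
so the affine factor is the unipotent group acting by row operations, not the normal directions to $\Sigma$, and no cancellation is needed. The lemma itself is proved by explicit row-echelon reduction: unlabelled free entries (type (a)) are eliminated by row operations and absorbed into $U$, while the $|\mu|$ labelled entries (type (b)) survive into the echelon form parametrizing $X_{w_\mu^{-1}}^\circ$. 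Note also that the characteristic $\neq 2$ hypothesis enters not through any transversality verification but through the basis \eqref{e.adapted-basis-C}, which splits $V\oplus V=\Delta^*\oplus\Delta$ with both summands isotropic. To repair your proposal you would need to abandon the transverse-slice picture and instead establish this group-theoretic factorization of $\Sigma(X_v^\circ)$ itself.
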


\begin{proof}
We begin the proof with some reductions.  First, consider the projection to the partial flag variety $\pi\colon G/B \to G/P_{\bp}$, where $G/P_{\bp}$ parametrizes partial flags $E_{p_1} \subseteq \cdots \subseteq E_{p_s} \subseteq V$.  (For example, in type A, $P_{\bp}$ is block-upper-triangular, with blocks of size $p_1$, $p_2-p_1$, etc.)  We have $\Omega_w = \pi^{-1}\Omega_{[w]}$, and the map $X_v^\circ \to X_{[v]}^\circ$ identifies $X_v^\circ \isom X_{[v]}^\circ \times \mathbb{A}^{\ell(v)-\ell(v^{\min})}$, where $v^{\min}$ is the minimal representative of the coset $[v]$.  This shows
\[
  \Omega_w \cap X_v^\circ \isom (\Omega_{[w]} \cap X_{[v]}^\circ) \times \mathbb{A}^{\ell(v)-\ell(v^{\min})}.
\]
So from now on we may assume $v=v^{\min}$, i.e., $v$ is minimal in its coset with respect to $P_{\bp}$.  

Next we turn to the direct sum embedding.  Let us write $\bar{G}/\bar{P}_{\br}$ for the target of $\Sigma$.  (So in type A, $\bar{G}=GL_{2n}$ and $\bar{G}/\bar{P}_{\br}$ is $Fl(\br; V\oplus V)$, where $r_i = p_i+n-q_i$.) 
With our assumption that $v=v^{\min}$, the composition
\[
  X_v^\circ \xrightarrow{\pi} X_{[v]}^\circ \xrightarrow{\Sigma} \bar{G}/\bar{P}_{\br}
\]
is an embedding, since the projection is an isomorphism.

As in Section~\ref{s.dirsum}, let $\Omega_{w_\lambda^{-1}} \subset \bar{G}/\bar{P}_{\br}$ be defined by intersection with the diagonal subspace $\Delta \subset V\oplus V$.   And let $Q \subset \bar{G}$ be the parabolic subgroup preserving $\Delta$, so $Q$ acts on $\Omega_{w_\lambda^{-1}}$.  (Choosing an appropriate basis for $V\oplus V$, this subgroup, $Q$ is given by block lower-triangular matrices in $\bar{G}$.)

Since $\Sigma^{-1}\Omega_{w_\lambda^{-1}} = \Omega_w$, we have
\[
  X_v^\circ \cap\Omega_w \cong \Sigma(X_v^\circ) \cap\Omega_{w_\lambda^{-1}}. 
\]
Then by Lemma \ref{l.KLisom} below, the statement follows. In fact, because $Q$ preserves $\Omega_{w_\lambda^{-1}}$, and the unipotent subgroup $U$ is isomorphic to affine space, the map of the lemma induces an isomorphism
\[
\Sigma(X_v^\circ) \cap\Omega_{w_\lambda^{-1}}\cong (X_{w_\mu^{-1}}^\circ \cap \Omega_{w_\lambda^{-1}}) \times \mathbb{A}^{\ell(v)-|\mu|},
\]
as required.
\end{proof}

We continue to assume $v=v^{\min}$.
\begin{lemma}\label{l.KLisom}
There is an element $g\in Q$ and a unipotent subgroup $U \subset Q$ such that the multiplication map defines an isomorphism of affine spaces
\begin{align}\label{Iso1}
U \times  X_{w_\mu^{-1}}^\circ  &\xrightarrow{\sim} \Sigma(X_v^\circ), \\
( u, x ) &\mapsto u\cdot g \cdot x. \nonumber
\end{align}
\end{lemma}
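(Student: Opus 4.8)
The plan is to make the isomorphism completely explicit in matrix coordinates, carrying out the argument in type A (the other types needing only the notational substitutions of Section~\ref{s.dirsum}). All three spaces are affine: $U$ is unipotent, so $U\isom\mathbb{A}^{\dim U}$; the opposite cell $X_{w_\mu^{-1}}^\circ$ is an affine space of dimension $\ell(w_\mu^{-1})=\ell(w_\mu)=|\mu|$; and $\Sigma(X_v^\circ)$ is the isomorphic image of the affine space $X_v^\circ$, of dimension $\ell(v)$. Thus the statement forces $\dim U=\ell(v)-|\mu|$, which is exactly the affine factor appearing in Theorem~\ref{t.KLisom}. Throughout I would use that $\Sigma$ is equivariant for the block-diagonal embedding $\iota\colon G\hookrightarrow \bar G$, $h\mapsto h\oplus\mathrm{id}$, so that $\Sigma(X_v^\circ)=\iota(B)\cdot\Sigma(\fp_v)$, together with the running assumption $v=v^{\min}$, which makes $\Sigma$ an embedding on cells.

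First I would pin down $g$. The image $\Sigma(\fp_v)$ is the coordinate flag with $G_{r_i}=E_{p_i}\oplus F^{q_i}$, and on all of $\mathrm{im}\,\Sigma$ one has $G_{r_i}\cap\Delta\isom E_{p_i}\cap F^{q_i}$; evaluated at $\fp_v$ this gives $\dim(G_{r_i}\cap\Delta)=k_v(q_i,p_i)=k_i'$. These are exactly the intersection dimensions with $\Delta$ that determine $\fp_{w_\mu^{-1}}$ through the outer shape $\mu$ (where $\mu_{k_i'}=q_i-p_i+k_i'$). I would then write down $g$ explicitly as the change-of-basis element carrying the $\Delta$-adapted coordinate flag $\fp_{w_\mu^{-1}}$ to the direct-sum coordinate flag $\Sigma(\fp_v)$, and check that it preserves $\Delta$, so that $g\in Q$ and $\Sigma(\fp_v)=g\cdot\fp_{w_\mu^{-1}}$.

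Next I would compute $\Sigma(X_v^\circ)=\iota(B)\,g\,\fp_{w_\mu^{-1}}$ in normal form. Representing flags by matrices in reduced echelon form adapted to the reference flag $\bar F_\bullet$ defining $\Omega_{w_\lambda^{-1}}$, the free entries of a general point of $\Sigma(X_v^\circ)$ occupy positions prescribed by the inversions of $v$ and the block shape of $\Sigma$. The core of the proof is to partition these positions into two groups: one group reproduces the free entries of the translated Grassmannian cell $g\,X_{w_\mu^{-1}}^\circ$, and the complementary group is produced by left multiplication by a unipotent subgroup $U\subset Q$ spanned by the root subgroups of $Q$ complementary to the cell directions. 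Writing $X_{w_\mu^{-1}}^\circ=\bar U^{-}\cdot\fp_{w_\mu^{-1}}$ for the standard unipotent parametrization, this is the factorization $\iota(B)\,g\,\fp_{w_\mu^{-1}}=U\,(g\bar U^{-}g^{-1})\,\Sigma(\fp_v)$, in which $U$ and $g\bar U^{-}g^{-1}$ jointly sweep out the $\iota(B)$-orbit. The multiplication map $(u,x)\mapsto u\cdot g\cdot x$ is then a morphism of affine spaces whose coordinate expression is triangular in the two groups of parameters; its inverse---read off first the $U$-coordinates, then the cell coordinates---is again polynomial, so the map is an isomorphism onto $\Sigma(X_v^\circ)$.

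The main obstacle will be the combinatorial matching in the previous paragraph: showing that the free positions of $\Sigma(X_v^\circ)$ split cleanly, that the complementary positions genuinely assemble into a unipotent \emph{subgroup} $U$ of $Q$ (not merely a set of tangent directions), and that the factorization is unique, i.e.\ that the multiplication map is injective. This is precisely the point at which the triple $\triple$, the weak triple $\triple'$, and the definition of the outer shape $\mu$ are used, and where $v=v^{\min}$ guarantees that no coordinates are lost in passing through $\Sigma$. Once this matching is established, the dimension count $\dim U=\ell(v)-|\mu|$ is automatic, and---since a bijective morphism of affine spaces with polynomial inverse is an isomorphism---the lemma follows.
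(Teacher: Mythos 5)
Your proposal reconstructs the skeleton of the paper's own argument: matrix representatives in a basis adapted to $\Delta$, an element $g\in Q$ carrying $\fp_{w_\mu^{-1}}$ to $\Sigma(\fp_v)$, a splitting of the free entries of $\Sigma(X_v^\circ)$ into those that survive in echelon form (sweeping out the Grassmannian cell) and those absorbed into a unipotent $U\subset Q$, and the forced dimension count $\dim U=\ell(v)-|\mu|$. The observation that $\dim(G_{r_i}\cap\Delta)=k_v(q_i,p_i)=k'_i$ on $\Sigma(\fp_v)$, and that this is what ties the target cell to the outer shape $\mu$, is correct and is indeed the conceptual heart of the setup. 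So the route is not different from the paper's; the issue is that it is only a route.

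The step you yourself flag as ``the main obstacle'' --- the combinatorial matching --- is the entire mathematical content of the paper's proof, and you do not carry it out. Concretely, the paper (i) labels the free entries of the matrix for $X_v^\circ$ by the essential positions $(q_i,p_i)$ and proves there are exactly $|\mu|$ labelled entries (Lemma~\ref{l.typeAlabels}, via the pivot count $p_i-k'_i$ in each northwest $q_i\times p_i$ submatrix); (ii) establishes the rank bound for the blocks of the $n\times 2n$ representative (Claim~\ref{claim.rank.typeA}), which forces an identity block of size $\mu_{k'_{i-1}}-\mu_{k'_i}$ in block $i$ of the echelon form, so that reduced matrices land inside $X_{w_\mu^{-1}}^\circ$; and (iii) classifies each free entry into type (a), eliminated by a row operation from $Q$ and absorbed into $U$, or type (b), surviving reduction, and shows via Claim~\ref{claimA2} that the type (b) entries are precisely the labelled ones --- so the image is all of $X_{w_\mu^{-1}}^\circ$ rather than a proper affine subspace, and the factorization is triangular, hence bijective. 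None of this is supplied in your proposal; in particular your assertion that the complementary directions are ``spanned by the root subgroups of $Q$ complementary to the cell directions'' is exactly what must be proved, since a priori the eliminating operations need not assemble into a subgroup. Two further points your sketch glosses over: the paper restricts attention to the top $n\times 2n$ block (the conditions defining $\Omega_{w_\lambda^{-1}}$ involve only the first $n$ rows) and uses admissible column operations from $\bar{P}_{\br}$ alongside row operations from $Q$; and in types B, C, D the reduction is not a mere notational substitution --- the row operations must come from a copy of $GL(V)$ embedded so as to preserve the form $\langle\!\langle\;,\;\rangle\!\rangle$, which requires the adapted basis \eqref{e.adapted-basis-C} (and $\mathrm{char}\,\KK\neq 2$), plus a separate count of the constrained $\bullet$ entries (Lemma~\ref{l.typeClabels}) to get from $|\tilde\mu|$ down to $|\mu|$.
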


To prove the lemma, we carry out computations in matrices: the argument consists of keeping track of reduction to row echelon form.  Nearly all the essential details appear in type A, so we will describe that case carefully, indicating what changes for other types.

\subsection{Type A}

Recall that $\lambda_{k_i} = q_i-p_i+k_i$, with the other parts filled in minimally, and similarly $\mu_{k'_i} = q_i-p_i+k'_i$.  We set $r_i = p_i+n-q_i$.  The target of $\Sigma$ is the partial flag variety $Fl(\br; V\oplus V) = \bar{G}/\bar{P}_{\br}$, where $\bar{G}=GL_{2n}$, and $\bar{P}_{\br}$ is the block-upper-triangular parabolic subgroup stabilizing a flag.  
To establish the isomorphism \eqref{Iso1}, we first represent $\Sigma(X_v^\circ)$ by $2n\times 2n$ matrices.

Choose a basis $\e_1,\ldots,\e_n$ for $V$, such that $F^{q_i}$ is the span of $\e_n,\ldots, \e_{q_i+1}$.  Start with the standard $n\times n$ matrix representatives for $X_v^\circ$.  This is the $B$-orbit of the permutation matrix for $v$, so matrices for $X_v^\circ$ have $1$'s in positions $(v(j),j)$, free entries (to be written as $*$'s) in positions $(i,j)$ such that $v(j)>i$ and $v^{-1}(i)>j$, and $0$'s elsewhere.  Say the columns of this $n\times n$ matrix are $\pmb{c}_1,\ldots,\pmb{c}_n$.

In the course of the proof, we will use a labelling of the free entries in such matrices, coming from the positions $(q_i,p_i)$ as follows.  Start with the northwest submatrix whose southeast corner lies at $(q_1,p_1)$.  Assign the label `$1$' to each $*$ in this submatrix not having a pivot $1$ in its row or column, within this same submatrix.  Continue for each $i$ from $1$ to $s$: assign label `$i$' to each previously unlabelled $*$ in the northwest $q_i\times p_i$ submatrix, if there is no pivot $1$ lying in the same row or column within the submatrix.  Some entries may be left unlabelled.

For a running example, let us take $\triple = (\bk,\bp,\bq) = (1\,2\,3\,5\,7,\, 3\,5\,5\,6\,7,\, 6\,6\,4\,2\,1)$, so $w=1\,3\,7\,5\,8\,4\,2\,6$.  And $v = 6\,7\,9\, 3\,8\,4\,5\,1\,2$ has $\bk' = (2\,3\,4\,6\,7)$.  So
\[
  \tau'=(\bk',\bp,\bq) = (2\;3\;4\;6\,7,\, 3\;5\;5\;6\,7,\, 6\;6\;4\;2\,1),
\]
and $\mu = (5,5,4,3,2,2,1)$.  In $Fl(\bp;V)$, we have:
\[ 
X_v^\circ=
\left[\begin{array}{ccc|cc|c|c|cc}
{\color{blue}*_3}&{\color{blue}*_1}&{\color{blue}*_1}&{\color{blue}*_4}&{\color{blue}*_2}&{\color{blue}*_4}&{\color{blue}*_5}_{\color{red}\lrcorner}& {\color{blue}\bf 1}&0\\
{\color{blue}*_3}&{\color{blue}*_1}&{\color{blue}*_1}&{\color{blue}*_4}&{\color{blue}*_2}&{\color{blue}*_4}_{\color{red}\lrcorner}&{\color{blue}*}  &0& {\color{blue}\bf 1}\\
{\color{blue}*} &{\color{blue}*_1} &{\color{blue}*_1} &{\color{blue}\bf1} & 0&0&0&0&0\\
{\color{blue}*_3}&{\color{blue}*_1}&{\color{blue}*_1}& 0 & {\color{blue}*_2}_{\color{red}\lrcorner} & {\color{blue}\bf 1}&0&0&0\\
{\color{blue}*}&{\color{blue}*_1}&{\color{blue}*_1}& 0 &{\color{blue}*_2}&0&{\color{blue}\bf1}&0&0\\
{\color{blue}\bf1}&0&0_{\color{red}\lrcorner}&0&0_{\color{red}\lrcorner}&0&0&0& 0\\
0& {\color{blue}\bf 1}&0&0&0&0&0&0&0\\
0&0& {\color{blue}*}&{\color{blue}\bf1}&0&0&0&0&0\\
0&0&{\color{blue}\bf1}&0&0&0&0&0&0\
\end{array}\right].
\]
(The conditions imposed by intersecting with $\Omega_{[w]}$ say: the northwest $6\times 3$ submatrix has rank $\leq 2$; the northwest $6\times 5$ submatrix has rank $\leq 3$; the northwest $4\times 5$ submatrix has rank $\leq 2$; the northwest $2\times 6$ submatrix has rank $\leq 1$; and the northwest $1\times 7$ submatrix has rank $0$.  We will not need this in what follows, except to observe that these conditions are preserved by all operations.)

\begin{lemma}\label{l.typeAlabels}
There are exactly $|\mu|$ labelled entries.
\end{lemma}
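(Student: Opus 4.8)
The plan is to translate the labelling procedure into explicit inequalities on matrix positions, and then to count the labelled entries as the cardinality of a union of combinatorial rectangles whose row- and column-sets are nested.

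First I would record exactly which positions are eligible to receive label $i$. In the standard representative the free entries (the $*$'s) sit at positions $(i_0,j_0)$ with $v(j_0)>i_0$ and $v^{-1}(i_0)>j_0$. Such an entry lying in the northwest $q_i\times p_i$ submatrix $M_i$ (so $i_0\le q_i$ and $j_0\le p_i$) is eligible for label $i$ precisely when its row and column contain no pivot inside $M_i$, i.e.\ when $v^{-1}(i_0)>p_i$ and $v(j_0)>q_i$. The key observation is that these two inequalities already force $v(j_0)>q_i\ge i_0$ and $v^{-1}(i_0)>p_i\ge j_0$; hence every position whose row and column are pivot-free in $M_i$ is automatically a free entry (and not a pivot). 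So the set $R_i$ of positions eligible at step $i$ is exactly the product of the pivot-free rows of $M_i$ with the pivot-free columns of $M_i$.

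Next I would count these using the rank function. The number of pivots inside $M_i$ is $r_v(q_i,p_i)=p_i-k'_i$, so $M_i$ has $q_i-(p_i-k'_i)=q_i-p_i+k'_i=\mu_{k'_i}$ pivot-free rows and $k'_i$ pivot-free columns. Since each labelled position receives its label at its smallest eligible step, the total number of labelled entries equals $\bigl|\bigcup_{i=1}^s R_i\bigr|$, the ``previously unlabelled'' clause being automatic. To evaluate this union I would use monotonicity of the triple ($q_1\ge\cdots\ge q_s$, $p_1\le\cdots\le p_s$): the column-sets $\{j_0\le p_i:v(j_0)>q_i\}$ are nested increasing while the row-sets $\mathrm{Rows}(R_i)=\{i_0\le q_i:v^{-1}(i_0)>p_i\}$ are nested decreasing. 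Scanning column by column, a column $c$ first enters the family at some step $\gamma(c)$, and the rows covered in that column are $\bigcup_{i\ge\gamma(c)}\mathrm{Rows}(R_i)=\mathrm{Rows}(R_{\gamma(c)})$, of size $\mu_{k'_{\gamma(c)}}$. Grouping the $k'_s$ columns so that $k'_i-k'_{i-1}$ of them first appear at step $i$ (with $k'_0=0$) yields
\[
 \Bigl|\bigcup_{i=1}^s R_i\Bigr|=\sum_{i=1}^s (k'_i-k'_{i-1})\,\mu_{k'_i}.
\]
Because $\mu$ is filled in minimally, $\mu_k=\mu_{k'_i}$ for $k'_{i-1}<k\le k'_i$, so $|\mu|=\sum_{i=1}^s(k'_i-k'_{i-1})\mu_{k'_i}$ is the same sum, completing the count.

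The step I expect to be the main obstacle is the clean evaluation of $\bigcup_i R_i$: one must verify both nestings carefully and organize the count so that positions eligible at several steps are not over-counted. Once the nesting is established, the column-scanning argument reduces everything to the elementary identity $|\mu|=\sum_i(k'_i-k'_{i-1})\mu_{k'_i}$ coming from the minimal fill of the outer shape---the only place where the definition of $\mu$ enters.
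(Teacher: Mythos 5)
Your proposal is correct and follows essentially the same route as the paper's proof: you count $p_i-k'_i$ pivots in the northwest $q_i\times p_i$ submatrix, deduce $\mu_{k'_i}$ pivot-free rows and $k'_i$ pivot-free columns, observe that $k'_i-k'_{i-1}$ of those columns are new at step $i$, and sum $(k'_i-k'_{i-1})\mu_{k'_i}$ over $i$ to get $|\mu|$. Your write-up is more detailed than the paper's --- in particular, the explicit verification that pivot-free rows and columns automatically index free entries, and the nesting of the row- and column-sets (which the paper uses implicitly in its ``of which $k'_{i-1}$ were labelled in previous steps'' step) --- but the underlying argument is the same.
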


\begin{proof}
From the definition of $\bk'$, there are $p_i-k'_i$ pivot $1$'s in the northwest $q_i\times p_i$ submatrix representing $X_v^\circ$.  So there are $q_i-p_i+k'_i = \mu_{k'_i}$ rows having no pivot.  And there are $k'_i$ columns having no pivot, of which $k'_{i-1}$ were labelled in previous steps.  So one sees $(k'_i-k'_{i-1})\mu_{k'_i}$ entries labelled `$i$'.  Summing over $i$ proves the lemma.
\end{proof}

Using the basis $(\e_1,0),\ldots,(\e_n,0),(0,\e_1),\ldots,(0,\e_n)$ for $V\oplus V$, matrix representatives for the embedded cell $\Sigma(X_v^\circ)$ have columns
\begin{align*}
 & (\pmb{c}_1,0),\ldots,(\pmb{c}_{p_1},0),(0,\e_{q_1+1}),\ldots,(0,\e_n), \\
 & (\pmb{c}_{p_1+1},0), \ldots,(\pmb{c}_{p_2},0),(0,\e_{q_2+1}),\ldots,(0,\e_{q_1}), \\
 & \cdots \\
 & (\pmb{c}_{p_s+1},0), \ldots,(\pmb{c}_{n},0),(0,\e_{1}),\ldots,(0,\e_{q_s}).
\end{align*}
These columns are separated into $s+1$ blocks, as indicated.  We will make the labels follow corresponding entries as they are embedded.

Continuing our example, the embedding in $Fl(\br;V\oplus V)$ is
\[\tiny
\Sigma(X_v^\circ) = \left[\begin{array}{cccccc|cc|cc|ccc|cc|ccc}
{\color{blue}*_3}&{\color{blue}*_1}&{\color{blue}*_1}&0&0&0&{\color{blue}*_4}&{\color{blue}*_2}&0&0&{\color{blue}*_4}&0&0&{\color{blue}*_5}&0&\color{blue}{\bf 1}&0&0\\
{\color{blue}*_3}&{\color{blue}*_1}&{\color{blue}*_1}&0&0&0&{\color{blue}*_4}&{\color{blue}*_2}&0&0&{\color{blue}*_4}&0&0& {\color{blue}*}&0&0&{\color{blue}\bf 1}&0\\
{\color{blue}*}&{\color{blue}*_1}&{\color{blue}*_1}&0&0&0&{\color{blue}\bf 1}&0&0&0&0&0&0&0&0&0&0&0\\
{\color{blue}*_3}&{\color{blue}*_1}&{\color{blue}*_1}&0&0&0&0&{\color{blue}*_2}&0&0& {\color{blue}\bf 1}&0&0&0&0&0&0&0\\
{\color{blue}*}&{\color{blue}*_1}&{\color{blue}*_1}&0&0&0&0&{\color{blue}*_2}&0&0&0&0&0&{\color{blue}\bf 1}&0&0&0&0\\
{\color{blue}\bf 1}&0&0&0&0&0&0&0&0&0&0&0&0&0&0&0&0&0\\
0& {\color{blue}\bf 1}&0&0&0&0&0&0&0&0&0&0&0&0&0&0&0&0\\
0&0& {\color{blue}*}&0&0&0&0&{\color{blue}\bf 1}&0&0&0&0&0&0&0&0&0&0\\
0&0&{\color{blue}\bf 1}&0&0&0&0&0&0&0&0&0&0&0&0&0&0&0\\\hline
0&0&0&0&0&0&0&0&0&0&0&0&0&0&0&0&0&\color{blue}{\bf 1}\\
0&0&0&0&0&0&0&0&0&0&0&0&0&0& {\color{blue}\bf 1}&0&0&0\\
0&0&0&0&0&0&0&0&0&0&0& {\color{blue}\bf 1}&0&0&0&0&0&0\\
0&0&0&0&0&0&0&0&0&0&0&0& {\color{blue}\bf 1}&0&0&0&0&0\\
0&0&0&0&0&0&0&0& {\color{blue}\bf 1}&0&0&0&0&0&0&0&0&0\\
0&0&0&0&0&0&0&0&0&{\color{blue}\bf 1}&0&0&0 &0&0&0&0&0\\
0&0&0& {\color{blue}\bf 1}&0&0&0&0&0&0&0&0&0&0&0&0&0&0\\
0&0&0&0& {\color{blue}\bf 1}&0&0&0&0&0&0&0&0&0&0&0&0&0\\
0&0&0&0&0& {\color{blue}\bf 1}&0&0&0&0&0&0&0&0&0&0&0&0
\end{array}
\right].
\]

Since we will impose conditions on intersections with $\Delta$, we change to a basis better adapted to the diagonal subspace.  With respect to the basis $(\e_1,0),\ldots,(\e_n,0),$
$(\e_1,\e_1),\ldots,$
$(\e_n,\e_n)$, the matrix representatives for $\Sigma(X_v^\circ)$ have columns
\begin{align*}
 & (\pmb{c}_1,0),\ldots,(\pmb{c}_{p_1},0),(-\e_{q_1+1},\e_{q_1+1}),\ldots,(-\e_n,\e_n), \\
 & (\pmb{c}_{p_1+1},0), \ldots,(\pmb{c}_{p_2},0),(-\e_{q_2+1},\e_{q_2+1}),\ldots,(-\e_{q_1},\e_{q_1}), \\
 & \cdots \\
 & (\pmb{c}_{p_s+1},0), \ldots,(\pmb{c}_{n},0),(-\e_{1},\e_{1}),\ldots,(-\e_{q_s},\e_{q_s}).
\end{align*}
In our running example, this is
\[\tiny
\left[\begin{array}{cccccc|cc|cc|ccc|cc|ccc}
{\color{blue}*_3}&{\color{blue}*_1}&{\color{blue}*_1}&0&0&0&{\color{blue}*_4}&{\color{blue}*_2}&0&0&{\color{blue}*_4}&0&0&{\color{blue}*_5}&0&\color{blue}{\bf 1}&0&\color{red}{\bf-1}\\
{\color{blue}*_3}&{\color{blue}*_1}&{\color{blue}*_1}&0&0&0&{\color{blue}*_4}&{\color{blue}*_2}&0&0&{\color{blue}*_4}&0&0& {\color{blue}*}&\color{red}{\bf-1}&0&{\color{blue}\bf 1}&0\\
{\color{blue}*}&{\color{blue}*_1}&{\color{blue}*_1}&0&0&0&{\color{blue}\bf 1}&0&0&0&0&\color{red}{\bf-1}&0&0&0&0&0&0\\
{\color{blue}*_3}&{\color{blue}*_1}&{\color{blue}*_1}&0&0&0&0&{\color{blue}*_2}&0&0& {\color{blue}\bf 1}&0&\color{red}{\bf-1}&0&0&0&0&0\\
{\color{blue}*}&{\color{blue}*_1}&{\color{blue}*_1}&0&0&0&0&{\color{blue}*_2}&\color{red}{\bf-1}&0&0&0&0&{\color{blue}\bf 1}&0&0&0&0\\
{\color{blue}\bf 1}&0&0&0&0&0&0&0&0&\color{red}{\bf-1}&0&0&0&0&0&0&0&0\\
0& {\color{blue}\bf 1}&0&\color{red}{\bf-1}&0&0&0&0&0&0&0&0&0&0&0&0&0&0\\
0&0& {\color{blue}*}&0&\color{red}{\bf-1}&0&0&{\color{blue}\bf 1}&0&0&0&0&0&0&0&0&0&0\\
0&0&{\color{blue}\bf 1}&0&0&\color{red}{\bf-1}&0&0&0&0&0&0&0&0&0&0&0&0\\\hline
0&0&0&0&0&0&0&0&0&0&0&0&0&0&0&0&0&\color{blue}{\bf 1}\\
0&0&0&0&0&0&0&0&0&0&0&0&0&0& {\color{blue}\bf 1}&0&0&0\\
0&0&0&0&0&0&0&0&0&0&0& {\color{blue}\bf 1}&0&0&0&0&0&0\\
0&0&0&0&0&0&0&0&0&0&0&0& {\color{blue}\bf 1}&0&0&0&0&0\\
0&0&0&0&0&0&0&0& {\color{blue}\bf 1}&0&0&0&0&0&0&0&0&0\\
0&0&0&0&0&0&0&0&0&{\color{blue}\bf 1}&0&0&0 &0&0&0&0&0\\
0&0&0& {\color{blue}\bf 1}&0&0&0&0&0&0&0&0&0&0&0&0&0&0\\
0&0&0&0& {\color{blue}\bf 1}&0&0&0&0&0&0&0&0&0&0&0&0&0\\
0&0&0&0&0& {\color{blue}\bf 1}&0&0&0&0&0&0&0&0&0&0&0&0
\end{array}
\right].
\]

Now we work with this matrix representation of $\Sigma(X_v^{\circ})$.  
Column operations within the $s+1$ blocks do not change the underlying partial flag in $Fl(\br;V\oplus V)$; nor do ``rightward'' column operations, which take a column from one block and add it to a column from a block to the right---these are precisely the operations coming from right-multiplication by  $\bar{P}_{\br}$.  (Sometimes we'll call these {\it admissible column operations}.)

Recall that $\Sigma(X_v^\circ) \cap \Omega_{w_\lambda^{-1}}$ is defined by imposing conditions on intersections with $\Delta$.  In our chosen basis,  these are equivalent to requiring that, for each $i$, the northwest $n\times r_i$ submatrix of $\Sigma(X_v^\circ)$ have rank at most $r_i-k_i$.  The subgroup $Q\subset GL_{2n}$ preserving $\Delta$ consists of block lower-triangular matrices, with two blocks of size $n$.  The action of $Q$ by left-multiplication (i.e., row operations) preserves $\Omega_{w_\lambda^{-1}}$.  
Since the conditions defining $\Omega_{w_\lambda^{-1}}$ concern only the first $n$ rows, from now on we focus on the top $n\times 2n$ submatrix.  We will use {\it admissible column operations} and {\it row operations from $Q$} to reduce $\Sigma(X_v^\circ)$ to echelon form.

Consider the $n\times 2n$ matrix representing $\Sigma(X_v^\circ)$, divided into $s+1$ blocks as before, so the $i$th block is on columns $r_{i-1}+1$ through $r_i$ (with the convention $r_0=0$).  We take this matrix to be generic, i.e., the $*$ entries are filled by independent variables.  In our example, it looks like this:
\begin{equation}\label{e.n2n}
\left[\begin{array}{cccccc|cc|cc|ccc|cc|ccc}
{\color{blue}*_3}&{\color{blue}*_1}&{\color{blue}*_1}&0&0&0&{\color{blue}*_4}&{\color{blue}*_2}&0&0&{\color{blue}*_4}&0&0&{\color{blue}*_5}&0&\color{blue}{\bf 1}&0&\color{red}{\bf -1}\\
{\color{blue}*_3}&{\color{blue}*_1}&{\color{blue}*_1}&0&0&0&{\color{blue}*_4}&{\color{blue}*_2}&0&0&{\color{blue}*_4}&0&0& {\color{blue}*}&{\color{red}\bf -1}&0&{\color{blue}\bf1}&0\\
{\color{blue}*}&{\color{blue}*_1}&{\color{blue}*_1}&0&0&0&{\color{blue}\bf1}&0&0&0&0&{\color{red}\bf -1}&0&0&0&0&0&0\\
{\color{blue}*_3}&{\color{blue}*_1}&{\color{blue}*_1}&0&0&0&0&{\color{blue}*_2}&0&0& {\color{blue}\bf 1}&0&{\color{red}\bf -1}&0&0&0&0&0\\
{\color{blue}*}&{\color{blue}*_1}&{\color{blue}*_1}&0&0&0&0& {\color{blue}*_2}&{\color{red}\bf -1}&0&0&0&0&{\color{blue}\bf1}&0&0&0&0\\
{\color{blue}\bf1}&0&0&0&0&0&0&0&0&{\color{red}\bf -1}&0&0&0&0&0&0&0&0\\
0& {\color{blue}\bf 1}&0&{\color{red}\bf -1}&0&0&0&0&0&0&0&0&0&0&0&0&0&0\\
0&0& {\color{blue}*}&0&{\color{red}\bf -1}&0&{\color{blue}\bf1}&0&0&0&0&0&0&0&0&0&0&0\\
0&0&{\color{blue}\bf1}&0&0&{\color{red}\bf -1}&0& 0&0&0&0&0&0&0&0&0&0&0\\
\end{array}\right]
\end{equation}

\begin{claim}\label{claim.rank.typeA}
 The submatrix formed by the first $i$ blocks---i.e., on the first $r_i$ columns---has rank at least $n-\mu_{k'_i}$. So block $i$ has minimal rank $ \mu_{k'_{i-1}}-\mu_{k'_i}$.  (And block $1$ has minimal rank $n-\mu_{k'_1}$.)
\end{claim}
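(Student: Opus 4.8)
The plan is to read the rank straight off the pivot structure of the standard matrix, organizing the count by the combinatorial data of the weak triple $\triple'$. I work throughout with the top $n\times 2n$ submatrix. By construction, its first $r_i$ columns (the first $i$ blocks) are the columns $\pmb c_1,\ldots,\pmb c_{p_i}$ of the $n\times n$ representative of $X_v^\circ$, together with the standard vectors contributed by the $\e$-parts of blocks $1,\ldots,i$. Since $q_1\geq\cdots\geq q_i$ (and with the convention $q_0=n$), these $\e$-columns telescope to exactly $-\e_{q_i+1},\ldots,-\e_n$, a set of $n-q_i$ vectors supported on rows $q_i+1,\ldots,n$.

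First I would isolate the guaranteed pivots. In the standard form of $X_v^\circ$, the column $\pmb c_j$ has its unique $1$ in row $v(j)$ and vanishes in all lower rows, so it is supported on rows $1,\ldots,v(j)$. Hence the columns $\pmb c_j$ with $v(j)\leq q_i$ lie in $\mathrm{span}(\e_1,\ldots,\e_{q_i})$, while the $\e$-columns lie in the complementary span of $\e_{q_i+1},\ldots,\e_n$. Distinct $\pmb c_j$ have distinct pivot rows and each is zero below its pivot, so the pivot columns with $v(j)\leq q_i$ are linearly independent \emph{for every value of the $*$'s}; together with the $\e$-columns, lying in complementary coordinate subspaces, they form an independent set. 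The number of such pivot columns is
\[
 \#\{\,j\leq p_i\;|\;v(j)\leq q_i\,\}=r_v(q_i,p_i)=p_i-k_v(q_i,p_i)=p_i-k'_i,
\]
using $k'_i=k_v(q_i,p_i)$ and the identity $r_v=p-k_v$. Adding the $n-q_i$ many $\e$-columns yields
\[
 (p_i-k'_i)+(n-q_i)=n-(q_i-p_i+k'_i)=n-\mu_{k'_i}
\]
independent columns, establishing $\rank\geq n-\mu_{k'_i}$ for an arbitrary (not merely generic) specialization.

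To see that $n-\mu_{k'_i}$ is the \emph{minimal} rank, I would specialize all $*=0$: then $\pmb c_j$ becomes $\e_{v(j)}$, so the first $i$ blocks span $\mathrm{span}(\e_{v(1)},\ldots,\e_{v(p_i)},\e_{q_i+1},\ldots,\e_n)$, of dimension $|\{v(1),\ldots,v(p_i)\}\cup\{q_i+1,\ldots,n\}|=p_i+(n-q_i)-k'_i=n-\mu_{k'_i}$, the overlap being exactly the $k'_i=k_v(q_i,p_i)$ indices $j\leq p_i$ with $v(j)>q_i$. Thus the lower bound is attained, so the minimal rank of the first $i$ blocks equals $n-\mu_{k'_i}$; subtracting the value for the first $i-1$ blocks shows block $i$ contributes incremental minimal rank $\mu_{k'_{i-1}}-\mu_{k'_i}$, with block $1$ contributing $n-\mu_{k'_1}$.

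The only real work is bookkeeping: telescoping the $\e$-columns across blocks, and translating $k'_i=k_v(q_i,p_i)$, the identity $r_v=p-k_v$, and $\mu_{k'_i}=q_i-p_i+k'_i$ into the pivot count. The linear algebra itself---independence via disjoint supports and distinct pivot rows---is routine and uniform in $i$, so I do not expect a genuine obstacle beyond keeping the indices straight.
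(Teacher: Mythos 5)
Your proposal is correct and is essentially the paper's own argument: the paper proves the same bound by counting the $r_i$ pivot entries $\pm 1$ (one per column), observing that exactly $k'_i$ rows contain both a $1$ and a $-1$, and extracting an upper unitriangular square submatrix on $r_i-k'_i = n-\mu_{k'_i}$ pivots --- the same count you obtain by splitting off the $p_i-k'_i$ columns $\pmb{c}_j$ with $v(j)\leq q_i$ from the $n-q_i$ telescoped $\e$-columns and using disjoint supports. Your additional verification that the bound is attained at the specialization $*=0$ is a slight strengthening that the paper leaves implicit in the word ``minimal,'' but it does not change the approach.
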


\begin{proof}
Consider this $n \times r_i$ matrix.  By construction, each column has a ``pivot'' $1$ or $-1$ in it, so there are $r_i$ such entries.  Also by construction, there are exactly $k'_i$ rows which contain both a $1$ and a $-1$.  In such rows, mark the first (leftmost) of them as a pivot.  So there are $r_i - k'_i = p_i+n-q_i-k'_i = n-\mu_{k'_i}$ pivots.  Without changing the rank, we may rearrange and scale columns so that these pivots are all $1$'s and proceed NW to SE in the first $n-\mu_{k'_i}$ columns.  The square submatrix on these columns, and rows containing their pivots, is upper unitriangular.  The claim about the rank follows.
\end{proof}

Next we use row operations, and admissible column operations, on this $n\times 2n$ matrix to put it into echelon form, keeping track of where the free entries end up.  These operations preserve ranks of each block. So in its echelon form, this means the $i$th block has an identity matrix of size $\mu_{k'_{i-1}}-\mu_{k'_i}$ in its northwest corner, and its free entries must fit inside the complementary southeast corner, which has size $\mu_{k'_i} \cdot (k'_{i}-k'_{i-1})$.  We will see that, in fact, the entries labelled $i$ land in block $i$ of this echelon form.

That is, the reduced matrices all belong to $X_{w_\mu^{-1}}^\circ$, in its usual matrix form.  
In our example, $X_{w_\mu^{-1}}^\circ$ is this:
\begin{equation}\label{e.ech-form}
\left[\begin{array}{cccccc|cc|cc|ccc|cc|ccc}
{ 1}&0&0&0&0&0&0&0&0&0&0&{ 0}&0&0&0&0&0&0\\
0& { 1}&0&0&{ 0}&0&0&0&0&0&0&0&0&0&0&0&0&0\\
0&0& { 1}&0&0&{ 0}&0&0&0&0&0&0&0&0&0&0&0&0\\
0&0&0&{ 1}&0&0&0& { 0}&0&0&0&0&0&0&0&0&0&0\\
0&0&0&0&\color{red}{*_1}&\color{red}{*_1}& { 1}&0&0&{ 0}&0&0&0&0&0&0&0&0\\
0&0&0&0&\color{red}{*_1}&\color{red}{*_1}&0&\color{red}{*_2}&{ 1}&0&0&0&0&0&0&0&{ 0}&0\\
0&0&0&0&\color{red}{*_1}&\color{red}{*_1}&0&\color{red}{*_2}&0&\color{red}{*_3}& { 1}&0&{ 0}&0&0&0&0&0\\
0&0&0&0&\color{red}{*_1}&\color{red}{*_1}&0&\color{red}{*_2}&0&\color{red}{*_3}&0&\color{red}{*_4}&\color{red}{*_4}& { 1}&{ 0}&0&0&0\\
0&0&0&0&{\color{red}*_1}&{\color{red}*_1}&0&\color{red}{*_2}&0&{\color{red}*_3}&0&\color{red}{*_4}&\color{red}{*_4}&0&\color{red}{*_5}&{ 1}&0&{ 0}\\
\end{array}
\right].
\end{equation}

The key point now is that the labelled free entries of \eqref{e.n2n} are exactly those which survive in echelon form \eqref{e.ech-form}; the unlabelled ones are eliminated by upper-triangular row operations, accounting for the unipotent group $U$.

Before reduction, the free entries of \eqref{e.n2n} occur as one of the following possible types:
\begin{enumerate}[(a)]
   \item $\left[\begin{array}{ccc} {\color{blue}*} & \cdots & {\color{red}-1} \\ \vdots \\ {\color{blue}1}\end{array} \right]$ (within a block), or $\left[\begin{array}{cccccc} {\color{blue}*} & \cdots & \pm 1 & \cdots & \mp 1 \\ \vdots & &  & \vdots \\ {\color{blue}1} & \cdots & \cdots & {\color{red}-1} \end{array}\right] $; \medskip \\  or \medskip
        \item  $\left[\begin{array}{cccc} {\color{blue}*} & \cdots & \cdots & \pm 1 \\ \vdots & & \vdots \\ {\color{blue}1} & \cdots & {\color{red}-1} \end{array}\right] $ or $\left[\begin{array}{ccccc} & & {\color{blue}*} & \cdots & \pm 1 \\ & &  \vdots \\ {\color{red}-1} & \cdots & {\color{blue}1}\end{array} \right] $.
\end{enumerate}

Case (a) corresponds to unlabelled entries.  Here the free entry $*$ is eliminated by row operations, so such entries are absorbed into the unipotent subgroup $U$.  Carrying out these operations, we obtain an isomorphism
\[
  \Sigma(X_v^\circ) \isom U \times M,
\]
where $M$ is the locus of matrices in $\Sigma(X_v^\circ)$ having free entries only of type (b) (the labelled entries).

In case (b), the free entry is not eliminated by row operations, and survives in the reduced echelon form.  So there is an isomorphism $M \to M'$, given by left-multiplication by an element $g\in Q$, where $M'$ is a locus of matrices in echelon form, of the type identified above (as in \eqref{e.ech-form}).

So to complete the proof of Lemma~\ref{l.KLisom}, it remains to see that $M' = X_{w_\mu^{-1}}^\circ$. Since both are affine spaces, and $M'\subseteq X_{w_\mu^{-1}}^\circ$ by the above considerations, it suffices to compute $\dim M'$, that is, the number of inversions of type (b).

\begin{claim}\label{claimA2}
There are precisely $|\mu|$ free entries of type (b).
\end{claim}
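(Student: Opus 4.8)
The plan is to count the type (b) entries by matching them with the labelled entries of the $n\times n$ matrix representing $X_v^\circ$, so that the count follows from Lemma~\ref{l.typeAlabels}. The first observation is that the direct-sum embedding $\Sigma$ neither creates nor destroys free entries: in passing from the $n\times n$ representative to \eqref{e.n2n}, the original columns $\pmb{c}_j$ are merely redistributed among the $s+1$ blocks and interleaved with the diagonal columns $(-\e_a,\e_a)$. Hence the $*$-entries of \eqref{e.n2n}, together with the labels attached to them in the $n\times n$ matrix, are in canonical bijection with the free entries of $X_v^\circ$. It therefore suffices to show that, under this bijection, the entries of type (b) are exactly the labelled ones.

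The core step is this identification, which I would carry out by a positional analysis. Fix a free entry in row $a$ coming from original column $j$, so that $v(j)>a$ and $v^{-1}(a)>j$; in \eqref{e.n2n} its column lies in the block $B_c(j):=\min\{i:j\le p_i\}$ and carries the pivot $1$ below it in row $v(j)$, while the $-1$'s in rows $a$ and $v(j)$ occupy diagonal columns in the blocks $B_d(a):=\min\{i:q_i<a\}$ and $B_d(v(j))$. Unwinding the definition of the label shows that the entry is labelled exactly when there is an index $i$ with $\max\{B_c(j),B_d(v(j))\}\le i<\min\{B_d(a),B_c(v^{-1}(a))\}$, the label being the smallest such $i$. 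The leading inequality $B_c(j)<B_d(a)$ records that the $-1$ in the row of the $*$ sits in a strictly later block than the $*$ itself; this is exactly the feature separating configuration (b), in which the entry survives all admissible operations and persists to echelon form, from configuration (a), in which that $-1$ lies in the same block and the entry is absorbed into the unipotent factor $U$. The remaining conditions, involving $B_d(v(j))$ and $B_c(v^{-1}(a))$, distinguish the two figures displayed within each of (a) and (b); checking that these four figures exhaust the possibilities is a finite verification against the block structure recorded after Claim~\ref{claim.rank.typeA}.

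Granting the identification, the conclusion is immediate: by Lemma~\ref{l.typeAlabels} there are exactly $|\mu|$ labelled entries, hence exactly $|\mu|$ entries of type (b). As a consistency check, grouping the labelled entries by their label $i$ reproduces the block-by-block tally $\sum_{i=1}^{s}(k'_i-k'_{i-1})\,\mu_{k'_i}=|\mu|$, which matches the southeast-corner dimensions $\mu_{k'_i}\times(k'_i-k'_{i-1})$ forced by the rank bounds of Claim~\ref{claim.rank.typeA}. I expect the main obstacle to be precisely the positional bookkeeping of the core step: one must confirm, across all shapes of the configurations, that the label condition coincides with survival under the combined admissible column operations and $Q$-row operations, with no accidental cancellation and no surviving entry outside the predicted southeast corners. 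The rank estimates of Claim~\ref{claim.rank.typeA} only bound the coarse shape of the echelon form; it is the bijection with labelled entries that certifies every southeast-corner position is genuinely free, so that $M'$ is the full cell $X_{w_\mu^{-1}}^\circ$ rather than a proper subvariety.
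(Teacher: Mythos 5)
Your proof follows the same route as the paper's: reduce the claim to the identification ``type (b) entries $=$ labelled entries'' and then invoke Lemma~\ref{l.typeAlabels}; and your reformulation of the labelling rule is correct---writing $B_c(j)=\min\{i \,:\, j\le p_i\}$ and $B_d(a)=\min\{i \,:\, q_i<a\}$, the free entry at $(a,j)$ is labelled if and only if $\max\{B_c(j),B_d(v(j))\}<\min\{B_d(a),B_c(v^{-1}(a))\}$. The genuine gap is the next step: it is \emph{not} true that the single inequality $B_c(j)<B_d(a)$ separates type (b) from type (a), with the remaining inequalities merely distinguishing the two figures within each case; failure of \emph{any one} of the four inequalities already forces elimination. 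The paper's running example refutes your criterion twice. For the unlabelled entry at $(a,j)=(3,1)$ of \eqref{e.n2n} one has $B_c(1)=1<4=B_d(3)$, yet $v(1)=6$ and $v^{-1}(3)=4$ give $B_d(6)=3\ge 2=B_c(4)$, so the entry carries no label; similarly at $(5,1)$, where $B_d(6)=3\ge 3=B_d(5)$. Your criterion would declare both entries to be of type (b), producing a count of $24\ne 22=|\mu|$, and the claim would fail.

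What goes wrong is that your dichotomy captures only the first figure of case (a). At $(3,1)$ the $-1$ of row $a$ does sit in a strictly later block than the $*$, but the entry is nevertheless killed: the row operation by the pivot row $v(j)$ annihilates the $*$ while depositing an entry in the diagonal column of $\e_{v(j)}$, which lies in block $B_d(v(j))$; and since $\min\{B_d(a),B_c(v^{-1}(a))\}\le B_d(v(j))$, that deposit is cleared by an admissible (within-block or rightward) column operation sourced at row $a$'s own pivot column $v^{-1}(a)$ or its diagonal column. This is exactly the second figure of case (a), in which the row-$a$ $\pm 1$ entries are \emph{not} confined to the block of the $*$. The correct survival criterion is therefore the full four-fold inequality---i.e., precisely the label condition---and a repaired proof must verify elimination in each of the ways a single inequality can fail, which is what the paper's two displayed configurations in case (a) jointly encode; you rightly anticipated that this bookkeeping is the main obstacle, but the criterion you propose for it is wrong. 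Once the identification is corrected, your concluding count via Lemma~\ref{l.typeAlabels}, and its consistency with the southeast corners forced by Claim~\ref{claim.rank.typeA}, go through as you describe.
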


Indeed, the entries of type (b) are exactly the labelled entries, so the claim follows from Lemma~\ref{l.typeAlabels}.

This completes the proof in type A.  Before turning to other types, we observe that the row operations used in the proof---left multiplication by elements of $Q$---actually come from the subgroup $GL(V) = GL(V\oplus 0) \subset Q \subset GL(V\oplus V)$.

\subsection{Type C}

The isomorphisms in other (classical) types are constructed exactly as in type A.  In fact, via standard embeddings of isotropic flag varieties in the usual (type A) flag varieties, the row operations described above for type A induce the required isomorphisms in other types.  We will spell this out in detail for type C; types D and B require only minor adjustments.

In outline, here is how we proceed.  The embedding $Fl^C(\bp;V) \hookrightarrow Fl(\tilde\bp;V)$ also determines an embedding of Schubert cells $X_v^\circ \hookrightarrow X_{\tilde{v}}^\circ$ such that $X_v^\circ = X_{\tilde{v}}^\circ \cap Fl^C(\bp;V)$ (usually not transversally), and similarly one has $\Omega_w \hookrightarrow \Omega_{\tilde{w}}$ with $\Omega_w = \Omega_{\tilde{w}} \cap Fl^C(\bp;V)$.

As noted in \S\ref{s.dirsum}, these embeddings are compatible with the direct sum map.  For a triple $\triple=(\bk,\bp,\bq)$ of type C, with corresponding strict partition $\lambda$, there is an extended triple $\tilde\triple=(\tilde\bk,\tilde\bp,\tilde\bq)$ of type A, with corresponding partition $\tilde\lambda$.  These have corresponding vexillary (signed) permutations $w$ and $\tilde{w}$.  For $v\geq w$, we have a type C weak triple $\triple' = (\bk',\bp,\bq)$ and an extension to a type A weak triple $\tilde\triple' = (\tilde\bk',\tilde\bp,\tilde\bq)$.  We will consider matrix representatives for $\Sigma(X_v^\circ) \subset \Sigma(X_{\tilde{v}}^\circ)$ inside $Fl^C(\br;V\oplus V) \subset Fl(\tilde{\br};V\oplus V)$.  The isomorphism of the type A Schubert cells $\Sigma(X_{\tilde{v}}^\circ) \isom \tilde{U} \times X_{w_{\tilde{\mu}}^{-1}}$, described above, induces the required isomorphism $\Sigma(X_{v}^\circ) \isom U \times X_{w_{\mu}^{-1}}$.  As before, this comes from row operations coming from left multiplication by a copy of $GL(V) \subset GL(V\oplus V)$.  Using an appropriate choice of basis---adapted to the diagonal subspace $\Delta\subset V\oplus V$, as before---this copy of $GL(V)$ lies inside the subgroup preserving the bilinear form $\langle\!\langle \;, \; \rangle\!\rangle$ on $V\oplus V$.

\medskip
Now let us spell out the details.  Given a type C triple $\triple=(\bk,\bp,\bq)$, the corresponding strict partition has $\lambda_{k_i} = p_i+q_i-1$, with the other parts filled in minimally so that $\lambda_1>\cdots>\lambda_s$.  Similarly, given $\triple'=(\bk',\bp,\bq)$, we have $\mu_{k'_i} = p_i+q_i-1$.  We set $r_i = p_i+q_i-1$.  The target of the direct sum map is $Fl^C(\br;V\oplus V) = \bar{G}/\bar{P}_{\br}$, where $\bar{G} = Sp_{4n}$ and $P_{\br}$ is the block-upper-triangular matrix preserving a standard isotropic flag.

As in type A, we start by choosing an appropriate basis.  Write $\e_{\bar{n}},\ldots,\e_{\bar1},\e_1,\ldots,\e_n$ for the standard basis of $V$, as in our conventions for type C, so $F_{q_i}$ is the span of $\e_n,\ldots,\e_{q_i}$.  The standard $2n\times 2n$ matrix representatives for $X_v^\circ$ are given by the $B^+$-orbit of the permutation matrix for the signed permutation $v$ (extended to a permutation in $S_{2n}$).  Such representatives have $1$'s in positions $(v(j),j)$ (for $j\in \{\bar{n},\ldots,\bar{1},1,\ldots,n\}$, and in positions $(i,j)$ such that $v(j)>i$ and $v^{-1}(i)>j$, the entries are either free (written $*$) or constrained by the isotropic condition (written $\bullet$).  Elsewhere there are $0$'s.  See, e.g., \cite[\S6]{FP} for this way of parametrizing Schubert cells (but note the conventions there are different from ours).

These matrix representatives naturally embed $X_v^\circ \subset Fl^C(\bp;V) \subset Fl(\tilde\bp;V)$, where
\[
 \tilde{p}_i = \begin{cases} n+1-p_i & \text{for }1\leq i\leq s; \\
                             n-1+p_{2s+1-i} & \text{for }s+1\leq i\leq 2s. \end{cases}
\]
These numbers are the dimensions of $E_p$ and $E_p^\perp$.  The labelling of $*$ and $\bullet$ entries is done just as in type A.

For our running example, take $n=5$, and let $\triple = (\bk,\bp,\bq) = (1\,2,\; 3\,1,\; 3\,2)$, so $w = \bar{2}\,1\,\bar{3}\,4\,5$.  With $v= 1\,3\,\bar{5}\,\bar{4}\,\bar{2}$, we find $\bk' = (2\,3)$, so
\[
  \triple' = (\bk',\bp,\bq) = (2\,3,\; 3\,1,\; 3\,2)
\]
and $\mu = (6,5,2)$.  The isotropic flag $E_3 \subset E_1 \subset V$ extends to $E_3 \subset E_1 = E_1^\perp \subset E_3^\perp \subset V$, and $\tilde\bp = (3\,5\,5\,7)$.  Matrix representatives are 
\[ 
X_v^\circ= \left[
\begin{array}{ccc|cc|cc|ccc}
{\color{blue}*_2}&{\color{blue}*_1}&{\color{blue}*_1}&{\color{blue}\bullet}&{\color{blue}\bullet_3}&{\color{blue}\bullet_4}&{\color{blue}\bullet_4}&{\color{blue}\bf1}&0& 0 \\
{\color{blue}*_2}&{\color{blue}*_1}&{\color{blue}\bullet_1}&{\color{blue}\bullet}&{\color{blue}\bullet_3}&{\color{blue}\bullet_4}&{\color{blue}\bullet_4}  & 0 & {\color{blue}\bf 1} & 0 \\
{\color{blue}*} &{\color{blue}*_1} &{\color{blue}*_1} &{\color{blue}\bf1} & 0&0&0_{\color{red}\lrcorner}&0&0 &0 \\
{\color{blue}*_2}&{\color{blue}\bullet_1}&{\color{blue}\bullet_1}& 0 & {\color{blue}\bullet_3}_{\color{red}\lrcorner} &  {\color{blue}\bullet} &  {\color{blue}\bullet} & 0 & 0 & {\color{blue}\bf 1}\\ 
{\color{blue}*}&{\color{blue}*_1}&{\color{blue}*_1}& 0&{\color{blue}\bf1}&0&0 &0 &0 &0\\ \hline 
{\color{blue}*_2}&{\color{blue}*_1}&{\color{blue}*_1}&0&0_{\color{red}\lrcorner}&{\color{blue}\bf1}&0&0&0 &0\\
{\color{blue}\bf1}&0&0_{\color{red}\lrcorner}&0&0&0&0&0& 0 & 0\\
0& {\color{blue}*}&{\color{blue}*}&0&0&0&{\color{blue}\bf1}&0&0&0\\
0&{\color{blue}\bf1}&0&0&0&0&0&0&0&0\\
0&0&{\color{blue}\bf1}&0&0&0&0&0&0&0
\end{array} \right].
\]
(The conditions imposed by intersecting with $\Omega_{[w]}$, namely (1) $\dim(E_3 \cap F_3)\geq 1$ and (2) $\dim(E_1\cap F_2)\geq 2$, say that (1) the northwest $7\times 3$ submatrix has rank at most $3-1=2$, and (2) the northwest $6\times 5$ submatrix has rank at most $5-2=3$.  Again, we will not need this in the proof, except to note these conditions are preserved.)

Given $\triple' = (\bk',\bp,\bq)$, the extension  $\tilde\triple' = (\tilde\bk',\tilde\bp,\tilde\bq)$ is defined by
\[
 \tilde{q}_i = \begin{cases} n-1+q_i & \text{for }1\leq i\leq s; \\
                             n+1-q_{2s+1-i} & \text{for }s+1\leq i\leq 2s; \end{cases}
\]
and
\[
 \tilde{k}'_i =  \begin{cases} k'_i & \text{for }1\leq i\leq s; \\
                             p_{2s+1-i}+q_{2s+1-i} + k'_{2s+1-i}-2 & \text{for }s+1\leq i\leq 2s. \end{cases}
\]
The numbers $\tilde{k}'_i$ are computed using $(E_p\cap F_q)^\perp = E_p^\perp+F_q^\perp$ for isotropic spaces $E_p$ and $F_q$.  So if $\dim(E_p \cap F_q) = k$, then $\dim( E_p^\perp + F_q^\perp ) = 2n-k$; together with the formulas for $\dim(E_p^\perp)$ and $\dim(F_q^\perp)$, this implies $\dim( E_p^\perp \cap F_q^\perp ) = p+q+k-2$.

When $p_s=q_s=1$, the formulas produce $(\tilde{k}'_{s+1}, \tilde{p}_{s+1}, \tilde{q}_{s+1}) = (\tilde{k}'_s,\tilde{p}_s,\tilde{q}_s)$ and we usually omit this repetition.  The partition $\tilde\mu$ is the one associated to the (type A) triple $\tilde\triple'$.

In our running example, $\triple' = (2\,3,\;3\,1,\;3\,2)$, and $\tilde\triple' = (2\,3\,4\,6,\; 3\,5\,5\,7,\;7\,6\,4\,3)$, so $\tilde\mu = (6,6,4,3,2,2)$.

There is some flexibility in the placement of $\bullet$ entries---that is, in deciding which entries are dependent on the others.  We will exchange $\bullet_i$ in position $(a,b)$ with $*_j$ in position $(a',b')$ whenever $i<j$, $v(b)=a'$, and $v(b')=a$.  

\[ 
X_v^\circ= \left[
\begin{array}{ccc|cc|cc|ccc}
{\color{blue}\bullet_2}&{\color{blue}*_1}&{\color{blue}*_1}&{\color{blue}\bullet}&{\color{blue}\bullet_3}&{\color{blue}\bullet_4}&{\color{blue}\bullet_4}&{\color{blue}\bf1}&0& 0 \\
{\color{blue}\bullet_2}&{\color{blue}*_1}&{\color{blue}\bullet_1}&{\color{blue}\bullet}&{\color{blue}\bullet_3}&{\color{blue}\bullet_4}&{\color{blue}\bullet_4}  & 0 & {\color{blue}\bf 1} & 0 \\
{\color{blue}*} &{\color{blue}*_1} &{\color{blue}*_1} &{\color{blue}\bf1} & 0&0&0_{\color{red}\lrcorner}&0&0 &0 \\
{\color{blue}*_2}&{\color{blue}*_1}&{\color{blue}*_1}& 0 & {\color{blue}\bullet_3}_{\color{red}\lrcorner} &  {\color{blue}\bullet} &  {\color{blue}\bullet} & 0 & 0 & {\color{blue}\bf 1}\\ 
{\color{blue}*}&{\color{blue}*_1}&{\color{blue}*_1}& 0&{\color{blue}\bf1}&0&0 &0 &0 &0\\ \hline 
{\color{blue}*_2}&{\color{blue}*_1}&{\color{blue}*_1}&0&0_{\color{red}\lrcorner}&{\color{blue}\bf1}&0&0&0 &0\\
{\color{blue}\bf1}&0&0_{\color{red}\lrcorner}&0&0&0&0&0& 0 & 0\\
0& {\color{blue}*}&{\color{blue}*}&0&0&0&{\color{blue}\bf1}&0&0&0\\
0&{\color{blue}\bf1}&0&0&0&0&0&0&0&0\\
0&0&{\color{blue}\bf1}&0&0&0&0&0&0&0
\end{array} \right].
\]

\begin{lemma}\label{l.typeClabels}
There are $|\mu|$ labelled free entires.
\end{lemma}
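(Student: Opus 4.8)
The plan is to deduce the count from the type A case (Lemma~\ref{l.typeAlabels}) together with the symplectic symmetry of the matrix. Via the coisotropic embedding $Fl^C(\bp;V)\hookrightarrow Fl(\tilde\bp;V)$, the cell $X_v^\circ$ sits inside the type A cell $X_{\tilde v}^\circ$, whose labelled entries number $|\tilde\mu|$ by Lemma~\ref{l.typeAlabels} applied to the extended weak triple $\tilde\triple'$. The matrix for $X_v^\circ$ carries \emph{every} labelled entry of $X_{\tilde v}^\circ$, now marked either free ($*$) or constrained ($\bullet$), and the lemma amounts to showing that exactly $|\mu|$ of these are free.

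\textbf{Key steps.} First I would record the numerical input $|\tilde\mu| = 2|\mu| - \ell(\mu)$, where $\ell(\mu)=k_s'$ is the number of parts of the strict partition $\mu$. This follows from the construction of $\tilde\triple'$ out of $\triple'$: the symmetric way in which $(\tilde\bk',\tilde\bp,\tilde\bq)$ is built from $(\bk',\bp,\bq)$ and their perps (using $(E_p\cap F_q)^\perp = E_p^\perp+F_q^\perp$, as already invoked for the formula for $\tilde k_i'$) forces $\tilde\mu$ to be self-conjugate, with diagonal length equal to $\ell(\mu)$. Next I would make the symplectic symmetry explicit: the form $\langle\;,\;\rangle$ induces an involution $\iota$ on the positions of the $2n\times 2n$ matrix (the anti-transpose twisted by the bar map $i\mapsto\bar\imath$), under which the Schubert-cell matrix is ``symmetric,'' so that $\iota$ interchanges free entries with constrained entries. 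The exchange rule stated just before the lemma --- swapping $\bullet_i$ at $(a,b)$ with $*_j$ at $(a',b')$ when $i<j$, $v(b)=a'$, and $v(b')=a$ --- is precisely the normalization making $\iota$ respect the labelling: it guarantees that a labelled entry and its $\iota$-image carry the same label, and that each fixed point of $\iota$ is recorded as free. The fixed points are exactly the $\ell(\mu)$ diagonal boxes of $\tilde\mu$, while off the diagonal the labelled entries occur in $\iota$-pairs $(*,\bullet)$. Hence the off-diagonal free and constrained labelled entries are equinumerous, and adding the $\ell(\mu)$ free diagonal entries gives a free count of $\tfrac12(|\tilde\mu|-\ell(\mu))+\ell(\mu) = \tfrac12(|\tilde\mu|+\ell(\mu)) = |\mu|$, as required. (One can sanity-check this against the running example, where $|\tilde\mu|=23$, $\ell(\mu)=3$, and indeed $13=|\mu|$ free entries appear.)

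\textbf{Main obstacle.} The real work lies in verifying that $\iota$ interacts correctly with the labelling and with the free/constrained dichotomy: that after the prescribed exchanges $\iota$ genuinely swaps $*$ with $\bullet$ while preserving labels, and that its fixed labelled entries are exactly the $\ell(\mu)$ diagonal boxes, each free. This demands unwinding the isotropic relations among the matrix entries and checking the compatibility of the exchange rule with the step-by-step labelling procedure (the northwest $q_i\times p_i$ truncations). Everything else --- the self-conjugacy of $\tilde\mu$ and the final arithmetic --- is routine bookkeeping parallel to the type A argument, and types B and D should then follow with only the minor adjustments of $r_i$ noted in \S\ref{s.dirsum}.
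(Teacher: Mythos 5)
Your numerical skeleton is sound: $\tilde\mu$ is indeed self-conjugate (from the extended triple one computes, for $i\geq s+1$ and $j=2s+1-i$, that $\tilde\mu_{\tilde{k}'_i}=\tilde{q}_i-\tilde{p}_i+\tilde{k}'_i=k'_j$, i.e.\ $\tilde\mu_{\tilde\mu_{k'_j}}=k'_j$), and the diagonal-hook decomposition gives $|\tilde\mu|=\sum_{k=1}^{\ell(\mu)}(2\mu_k-1)=2|\mu|-\ell(\mu)$, so the arithmetic $\tfrac12\bigl(|\tilde\mu|+\ell(\mu)\bigr)=|\mu|$ is correct in type C. But there is a genuine gap exactly where you place the ``main obstacle'': the entire content of the lemma is the claim that $\iota$ stabilizes the set of labelled entries, swaps $*$ with $\bullet$ off its fixed locus, and has exactly $\ell(\mu)$ labelled fixed points, all free --- and you defer this rather than prove it, so the final arithmetic has nothing to act on. Worse, one of your supporting assertions is false as stated: $\iota$ does \emph{not} preserve labels. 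The paper's exchange rule explicitly pairs $\bullet_i$ with $*_j$ for $i<j$ (distinct labels), and in the running type C example the entry $\bullet_1$ in matrix position $(4,3)$ is paired with $*_2$ in position $(1,1)$: labels travel with positions, not with $\iota$-orbits. Hence $\iota$-stability of the labelled set cannot be gotten ``for free'' from label-preservation; it would have to be extracted from the self-conjugacy of $\tilde\mu$ together with a compatibility of the step-by-step northwest-truncation labelling with the twisted anti-transpose --- a verification at least as long as the lemma's actual proof. Your closing remark on types B and D also conceals a real change, not a ``minor adjustment of $r_i$'': in type D the self-paired coordinates are \emph{constrained} (an alternating-type condition kills the diagonal), so the fixed points enter with the opposite sign; there the correct identities are $|\tilde\mu|=2|\mu|+d$ and free count $\tfrac12\bigl(|\tilde\mu|-d\bigr)$, consistent with $\mu_{k'_i}=p_i+q_i$ rather than $p_i+q_i-1$ (check: $d=3$, $|\tilde\mu|=23$, $|\mu|=10$ in the paper's type D example).

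For comparison, the paper's proof avoids the involution entirely with a local count: it also starts from Lemma~\ref{l.typeAlabels} ($|\tilde\mu|$ labelled entries in all), but then orders the labelled columns compatibly with the labels and observes that the isotropy conditions put exactly $k-1$ constrained entries $\bullet$ in the $k$th such column (the new column must pair to zero against the previous ones), so the free entries number $\sum_{k}\bigl(\tilde\mu_k-(k-1)\bigr)=\sum_k\mu_k=|\mu|$, using $\tilde\mu_k=\mu_k+k-1$ for $k\leq k'_s$. If you want to salvage your global-symmetry route, the statement you must actually prove is the $\iota$-stability of the labelled set together with the identification and freeness (in type C) of its $\ell(\mu)$ fixed points; the paper's column-by-column bullet count is precisely the elementary substitute for that verification.
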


\begin{proof}
The proof is similar to type A.  In fact, Lemma~\ref{l.typeAlabels} shows there are $|\tilde\mu|$ labelled entries $*$ and $\bullet$.  The isotropic conditions account for the $\bullet$'s: listing $k_s$ columns in any order compatible with the labels (so columns with $*_1$ come first, then $*_2$, etc.), there are $k-1$ entries $\bullet$ in the $k$th column.  For $1\leq i\leq s$, then, there are
\[
  \sum_{k=k_{i-1}+1}^{k_i} \left(\tilde\mu_k - (k-1)\right) = \sum_{k=k_{i-1}+1}^{k_i} \mu_k
\]
free entries $*_i$.  Summing over $i$ proves the lemma.
\end{proof}

Using the basis $(\e_{\bar{n}},0),\ldots,(\e_n,0),(0,\e_{\bar{n}}),\ldots,(0,\e_n)$ for $V\oplus V$, matrix representatives for $\Sigma(X_v^\circ) \subset Fl^C(\br;V\oplus V) \subset Fl(\tilde\br;V\oplus V)$ are given by a $4n\times 4n$ matrix, similar to type A.  In our example, this is
\[\tiny
\Sigma(X_v^\circ) = \left[\begin{array}{cccccc|ccc|c||c|ccc|cccccc}
{\color{blue}\bullet}&{\color{blue}*}&{\color{blue}*}&0&0&0&{\color{blue}\bullet}&{\color{blue}\bullet}&0&0&0&0&{\color{blue}\bullet}&{\color{blue}\bullet}&0&0&0&\color{blue}{\bf 1}&0&0\\
{\color{blue}\bullet}&{\color{blue}*}&{\color{blue}\bullet}&0&0&0&{\color{blue}\bullet}&{\color{blue}\bullet}&0&0&0&0&{\color{blue}\bullet}&{\color{blue}\bullet}&0&0&0&0&{\color{blue}\bf 1}&0\\
{\color{blue}*}&{\color{blue}*}&{\color{blue}*}&0&0&0&{\color{blue}\bf 1}&0&0&0&0&0&0&0&0&0&0&0&0&0\\
{\color{blue}*}&{\color{blue}*}&{\color{blue}*}&0&0&0&0&{\color{blue}\bullet}&0&0&0&0&{\color{blue}\bullet}&{\color{blue}\bullet}&0&0&0&0&0& {\color{blue}\bf 1}\\
{\color{blue}*}&{\color{blue}*}&{\color{blue}*}&0&0&0&0&{\color{blue}\bf 1}&0&0&0&0&0&0&0&0&0&0&0&0\\ \hline
{\color{blue}*}&{\color{blue}*}&{\color{blue}*}&0&0&0&0&0&0&0&0&0&{\color{blue}\bf1}&0&0&0&0&0&0&0\\
{\color{blue}\bf 1}&0&0&0&0&0&0&0&0&0&0&0&0&0&0&0&0&0&0&0\\
0&{\color{blue}*}&{\color{blue}*}&0&0&0&0&0&0&0&0&0&0&{\color{blue}\bf1}&0&0&0&0&0&0\\
0&{\color{blue}\bf 1}&0&0&0&0&0&0&0&0&0&0&0&0&0&0&0&0&0&0\\
0&0&{\color{blue}\bf 1}&0&0&0&0&0&0&0&0&0&0&0&0&0&0&0&0&0\\ \hline\hline
0&0&0&0&0&0&0&0&0&0&0&0&0&0&\color{blue}{\bf 1}&0&0&0&0&0\\
0&0&0&0&0&0&0&0&0&0&0&0&0&0&0&\color{blue}{\bf 1}&0&0&0&0\\
0&0&0&0&0&0&0&0&0&0&0&0&0&0&0&0& {\color{blue}\bf 1}&0&0&0\\
0&0&0&0&0&0&0&0&0&0&0& {\color{blue}\bf 1}&0&0&0&0&0&0&0&0\\
0&0&0&0&0&0&0&0&0&{\color{blue}\bf 1}&0&0&0&0&0&0&0&0&0&0\\ \hline
0&0&0&0&0&0&0&0&0&0& {\color{blue}\bf 1}&0&0&0&0&0&0&0&0&0\\
0&0&0&0&0&0&0&0&{\color{blue}\bf 1}&0&0&0 &0&0&0&0&0&0&0&0\\
0&0&0& {\color{blue}\bf 1}&0&0&0&0&0&0&0&0&0&0&0&0&0&0&0&0\\
0&0&0&0& {\color{blue}\bf 1}&0&0&0&0&0&0&0&0&0&0&0&0&0&0&0\\
0&0&0&0&0& {\color{blue}\bf 1}&0&0&0&0&0&0&0&0&0&0&0&0&0&0
\end{array}
\right].
\]
(Lines are added as visual aids, to distinguish both the blocks corresponding to the partial flag variety $Fl(\tilde\br;V\oplus V)$ as before, as well as the axes of symmetry arising from the isotropic conditions.)

Next, we change basis to work with one which is both adapted to $\Delta\subset V\oplus V$, and with respect to which the bilinear form $\langle\!\langle\;,\;\rangle\!\rangle$ has antidiagonal Gram matrix.  A convenient choice is
\begin{equation}\label{e.adapted-basis-C}
 \frac{1}{2}(\e_{\bar{n}},-\e_{\bar{n}}), \ldots,\frac{1}{2}(\e_n,-\e_n), (\e_{\bar{n}},\e_{\bar{n}}), \ldots, (\e_n,\e_n).
\end{equation}
(Here we require $\mathrm{char}(\KK)\neq 2$.)  In this basis, the top half of the matrix for $\Sigma(X_v^\circ)$ has a similar form to the one described above in type A.  In our example, it is
\[\small
 \left[\begin{array}{cccccc|ccc|c||c|ccc|cccccc}
{\color{blue}\bullet}&{\color{blue}*}&{\color{blue}*}&0&0&0&{\color{blue}\bullet}&{\color{blue}\bullet}&0&0&0&0&{\color{blue}\bullet}&{\color{blue}\bullet}&\color{red}{\bf -1}&0&0&\color{blue}{\bf 1}&0&0\\
{\color{blue}\bullet}&{\color{blue}*}&{\color{blue}\bullet}&0&0&0&{\color{blue}\bullet}&{\color{blue}\bullet}&0&0&0&0&{\color{blue}\bullet}&{\color{blue}\bullet}&0&\color{red}{\bf -1}&0&0&{\color{blue}\bf 1}&0\\
{\color{blue}*}&{\color{blue}*}&{\color{blue}*}&0&0&0&{\color{blue}\bf 1}&0&0&0&0&0&0&0&0&0&\color{red}{\bf -1}&0&0&0\\
{\color{blue}*}&{\color{blue}*}&{\color{blue}*}&0&0&0&0&{\color{blue}\bullet}&0&0&0&\color{red}{\bf -1}&{\color{blue}\bullet}&{\color{blue}\bullet}&0&0&0&0&0& {\color{blue}\bf 1}\\
{\color{blue}*}&{\color{blue}*}&{\color{blue}*}&0&0&0&0&{\color{blue}\bf 1}&0&\color{red}{\bf -1}&0&0&0&0&0&0&0&0&0&0\\ \hline
{\color{blue}*}&{\color{blue}*}&{\color{blue}*}&0&0&0&0&0&0&0&\color{red}{\bf -1}&0&{\color{blue}\bf1}&0&0&0&0&0&0&0\\
{\color{blue}\bf 1}&0&0&0&0&0&0&0&\color{red}{\bf -1}&0&0&0&0&0&0&0&0&0&0&0\\
0&{\color{blue}*}&{\color{blue}*}&\color{red}{\bf -1}&0&0&0&0&0&0&0&0&0&{\color{blue}\bf1}&0&0&0&0&0&0\\
0&{\color{blue}\bf 1}&0&0&\color{red}{\bf -1}&0&0&0&0&0&0&0&0&0&0&0&0&0&0&0\\
0&0&{\color{blue}\bf 1}&0&0&\color{red}{\bf -1}&0&0&0&0&0&0&0&0&0&0&0&0&0&0\\ \hline\hline
\end{array}
\right].
\]
The bottom half of such a matrix has many more nonzero entries than the one we used in type A, but only the top half will be relevant.

The key features of this basis are:
\begin{itemize}
    \item The diagonal subspace $\Delta$ is spanned by the last $2n$ vectors of the basis, so conditions on $\dim( G_{r_i} \cap \Delta )$ are equivalent to rank conditions on the first $2n$ rows of the matrix representatives.
    
    \item The first $2n$ vectors span a subspace $\Delta^* \subset V\oplus V$ which is also isotropic with respect to $\langle\!\langle\;,\;\rangle\!\rangle$, so a copy of $GL_{2n} \subset Sp_{4n}$ acts by (arbitrary) row operations on the first $2n$ rows (combined with compensating simultaneous row operations on the last $2n$ rows).
\end{itemize}
In our example, the conditions imposed by intersecting with $\Omega_{[w_{\lambda}^{-1}]}$ say that the northwest $10\times 6$ submatrix has rank at most $6-1=5$, and the northwest $10\times 9$ submatrix has rank at most $9-2=7$.

With this in mind, together with the observation made after the conclusion of the type A argument, we may preform the same row operations as in type A, and end with matrix representatives lying in $\Sigma(X_{w_{\tilde\mu}^{-1}}^\circ)$.  But since each row operation preserves $\langle\!\langle\;,\;\rangle\!\rangle$, the result in fact lies in the subset $\Sigma(X_{w_\mu^{-1}}^\circ) \subseteq \Sigma(X_{w_{\tilde\mu}^{-1}}^\circ)$.  In our running example, these are matrices of the following form:
\[\small
 \left[\begin{array}{cccccc|ccc|c||c|ccc|cccccc}
1&0&0&0&0&0&0&0&0&0&0&0&0&0&0&0&0&0&0&0\\
0&1&0&0&0&0&0&0&0&0&0&0&0&0&0&0&0&0&0&0\\
0&0&1&0&0&0&0&0&0&0&0&0&0&0&0&0&0&0&0&0\\
0&0&0&1&0&0&0&0&0&0&0&0&0&0&0&0&0&0&0&0\\
0&0&0&0&{\color{red}*}&{\color{red}*}&1&0&0&0&0&0&0&0&0&0&0&0&0&0\\ \hline
0&0&0&0&{\color{red}*}&{\color{red}*}&0&1&0&0&0&0&0&0&0&0&0&0&0&0\\
0&0&0&0&{\color{red}*}&{\color{red}*}&0&0&{\color{red}*}&1&0&0&0&0&0&0&0&0&0&0\\
0&0&0&0&{\color{red}*}&{\color{red}*}&0&0&{\color{red}*}&0&{\color{red}\bullet}&1&0&0&0&0&0&0&0&0\\
0&0&0&0&{\color{red}*}&{\color{red}*}&0&0&{\color{red}\bullet}&0&{\color{red}\bullet}&0&{\color{red}\bullet}&{\color{red}\bullet}&1&0&0&0&0&0\\
0&0&0&0&{\color{red}*}&{\color{red}\bullet}&0&0&{\color{red}\bullet}&0&{\color{red}\bullet}&0&{\color{red}\bullet}&{\color{red}\bullet}&0&1&0&0&0&0\\ \hline\hline
\end{array}
\right].
\]

To complete the proof of Lemma~\ref{l.KLisom} in type C, we observe that a counting argument analogous to the one used in type A shows that all such matrices appear.  In addition to the free entries $*$ of type (a) and (b), which appear in configurations exactly as in type A, the $\bullet$ entries appear in two types:
\begin{enumerate}[(a)]
   \item[(a')] $\left[\begin{array}{ccc} {\color{blue}\bullet} & \cdots & {\color{red}-1} \\ \vdots \\ {\color{blue}1}\end{array} \right]$ (within a block), or $\left[\begin{array}{cccccc} {\color{blue}\bullet} & \cdots & \pm 1 & \cdots & \mp 1 \\ \vdots & &  & \vdots \\ {\color{blue}1} & \cdots & \cdots & {\color{red}-1} \end{array}\right] $; \medskip \\  or \medskip
   \item[(b')]  $\left[\begin{array}{cccc} {\color{blue}\bullet} & \cdots & \cdots & \pm 1 \\ \vdots & & \vdots \\ {\color{blue}1} & \cdots & {\color{red}-1} \end{array}\right] $ or $\left[\begin{array}{ccccc} & & {\color{blue}\bullet} & \cdots & \pm 1 \\ & &  \vdots \\ {\color{red}-1} & \cdots & {\color{blue}1}\end{array} \right] $.
\end{enumerate}
Just as in type A, row operations eliminating type (a) (and (a')) entries are absorbed into the unipotent group $U$.  Claim~\ref{claimA2} shows that there are $|\tilde\mu|$ entries of types (b) and (b'), combined, and that these are precisely the labelled entries.  Then Lemma~\ref{l.typeClabels} shows that there are $|\mu|$ (free) entries of type (b), as required.

\subsection{Type D}

Only a few changes are required to modify the type C argument into one which works for type D.  Given a type D triple $\triple = (\bk,\bp,\bq)$, the corresponding partition has $\lambda_{k_i} = p_i+q_i$, and similarly, $\triple'=(\bk',\bp,\bq)$ has partition with $\mu_{k'_i} = p_i+q_i$.  The extension of $\triple'$ is $\tilde\triple' = (\tilde\bk',\tilde\bp,\tilde\bq)$, with
\[
  \tilde{p}_i = \begin{cases} n-p_i &\text{for }1\leq i\leq s; \\ n+p_{2s+1-i} &\text{for }s+1\leq i\leq 2s; \end{cases}
\]
\[
  \tilde{q}_i = \begin{cases} n+q_i &\text{for }1\leq i\leq s; \\ n-q_{2s+1-i} &\text{for }s+1\leq i\leq 2s; \end{cases}
\]and
\[
  \tilde{k}'_i = \begin{cases} k'_i &\text{for }1\leq i\leq s; \\ p_{2s+1-i}+q_{2s+1-i}+k'_{2s+1-i} & \text{for } s+1\leq i\leq 2s. \end{cases}
\]
The reasons for these numbers are the same as in type C.  In the case $p_s=q_s=0$, the formulas produce $(\tilde{k}'_{s+1}, \tilde{p}_{s+1}, \tilde{q}_{s+1}) = (\tilde{k}'_s,\tilde{p}_s,\tilde{q}_s)$ and as in type C, we usually omit this repetition.

The matrix manipulations are essentially the same as in type C.  We briefly illustrate with an example, for $n=5$.  Take $\triple = (1\,2,\; 2\,0,\; 2\,1)$, so $w=\bar{2}\,1\,\bar{3}\,4\,5$ and $\lambda = (4,1)$.  With $v=\bar{1}\,3\,\bar{5}\,\bar{4}\,\bar{2}$, we find $k_v(q_1,p_1)=2$ and $k_v(q_2,p_2)=3$.  So we get a weak triple $\triple' = (\bk',\bp,\bq) = (2\,3,\; 2\,0,\; 2\,1)$, with $\mu = (5,4,1,0)$.  Then $\tilde\triple' = (2\,3\,4\,6,\; 3\,5\,5\,7,\; 7\,6\,4\,3)$, with $\tilde\mu = (6,6,4,3,2,2))$.

In matrices,
\[ 
X_v^\circ= \left[
\begin{array}{ccc|cc|cc|ccc}
{\color{blue}*}&{\color{blue}*}&{\color{blue}\bullet}&{\color{blue}\bullet}&{\color{blue}\bullet}&{\color{blue}\bullet}&{\color{blue}\bullet}&{\color{blue}\bf1}&0& 0 \\
{\color{blue}*}&{\color{blue}\bullet}&{\color{blue}\bullet}&{\color{blue}\bullet}&{\color{blue}\bullet}&{\color{blue}\bullet}&{\color{blue}\bullet}  & 0 & {\color{blue}\bf 1} & 0 \\
{\color{blue}*} &{\color{blue}*} &{\color{blue}*} &{\color{blue}\bf1} & 0&0&0&0&0 &0 \\
{\color{blue}\bullet}&{\color{blue}\bullet}&{\color{blue}\bullet}& 0 & {\color{blue}\bullet} &  {\color{blue}\bullet} &  {\color{blue}\bullet} & 0 & 0 & {\color{blue}\bf 1}\\ 
{\color{blue}*}&{\color{blue}*}&{\color{blue}*}& 0&0&{\color{blue}\bf1}&0 &0 &0 &0\\ \hline 
{\color{blue}*}&{\color{blue}*}&{\color{blue}*}&0&{\color{blue}\bf1}&0&0&0&0 &0\\
{\color{blue}\bf1}&0&0&0&0&0&0&0& 0 & 0\\
0& {\color{blue}*}&{\color{blue}*}&0&0&0&{\color{blue}\bf1}&0&0&0\\
0&{\color{blue}\bf1}&0&0&0&0&0&0&0&0\\
0&0&{\color{blue}\bf1}&0&0&0&0&0&0&0
\end{array} \right].
\]
Under the direct sum embedding, after re-arranging the $\bullet$ entries and changing to the basis \eqref{e.adapted-basis-C}, the top half of the matrix representing $\Sigma(X_v^\circ)$ is
\[\small
\left[\begin{array}{cccccc|ccc|c||c|ccc|cccccc}
{\color{blue}\bullet}&{\color{blue}*}&{\color{blue}\bullet}&0&0&0&{\color{blue}\bullet}&{\color{blue}\bullet}&0&0&0&0&{\color{blue}\bullet}&{\color{blue}\bullet}&\color{red}{\bf -1}&0&0&\color{blue}{\bf 1}&0&0\\
{\color{blue}\bullet}&{\color{blue}\bullet}&{\color{blue}\bullet}&0&0&0&{\color{blue}\bullet}&{\color{blue}\bullet}&0&0&0&0&{\color{blue}\bullet}&{\color{blue}\bullet}&0&\color{red}{\bf -1}&0&0&{\color{blue}\bf 1}&0\\
{\color{blue}*}&{\color{blue}*}&{\color{blue}*}&0&0&0&{\color{blue}\bf 1}&0&0&0&0&0&0&0&0&0&\color{red}{\bf -1}&0&0&0\\
{\color{blue}\bullet}&{\color{blue}*}&{\color{blue}*}&0&0&0&0&{\color{blue}\bullet}&0&0&0&\color{red}{\bf -1}&{\color{blue}\bullet}&{\color{blue}\bullet}&0&0&0&0&0& {\color{blue}\bf 1}\\
{\color{blue}*}&{\color{blue}*}&{\color{blue}*}&0&0&0&0&0&0&0&\color{red}{\bf -1}&0&{\color{blue}\bf 1}&0&0&0&0&0&0&0\\ \hline
{\color{blue}*}&{\color{blue}*}&{\color{blue}*}&0&0&0&0&{\color{blue}\bf1}&0&\color{red}{\bf -1}&0&0&0&0&0&0&0&0&0&0\\
{\color{blue}\bf 1}&0&0&0&0&0&0&0&\color{red}{\bf -1}&0&0&0&0&0&0&0&0&0&0&0\\
0&{\color{blue}*}&{\color{blue}*}&\color{red}{\bf -1}&0&0&0&0&0&0&0&0&0&{\color{blue}\bf1}&0&0&0&0&0&0\\
0&{\color{blue}\bf 1}&0&0&\color{red}{\bf -1}&0&0&0&0&0&0&0&0&0&0&0&0&0&0&0\\
0&0&{\color{blue}\bf 1}&0&0&\color{red}{\bf -1}&0&0&0&0&0&0&0&0&0&0&0&0&0&0\\ \hline\hline
\end{array}
\right].
\]
After performing row operations, the reduced form is
\[\small
 \left[\begin{array}{cccccc|ccc|c||c|ccc|cccccc}
1&0&0&0&0&0&0&0&0&0&0&0&0&0&0&0&0&0&0&0\\
0&1&0&0&0&0&0&0&0&0&0&0&0&0&0&0&0&0&0&0\\
0&0&1&0&0&0&0&0&0&0&0&0&0&0&0&0&0&0&0&0\\
0&0&0&1&0&0&0&0&0&0&0&0&0&0&0&0&0&0&0&0\\
0&0&0&0&{\color{red}*}&{\color{red}*}&1&0&0&0&0&0&0&0&0&0&0&0&0&0\\ \hline
0&0&0&0&{\color{red}*}&{\color{red}*}&0&1&0&0&0&0&0&0&0&0&0&0&0&0\\
0&0&0&0&{\color{red}*}&{\color{red}*}&0&0&{\color{red}*}&1&0&0&0&0&0&0&0&0&0&0\\
0&0&0&0&{\color{red}*}&{\color{red}*}&0&0&{\color{red}\bullet}&0&{\color{red}\bullet}&1&0&0&0&0&0&0&0&0\\
0&0&0&0&{\color{red}*}&{\color{red}\bullet}&0&0&{\color{red}\bullet}&0&{\color{red}\bullet}&0&{\color{red}\bullet}&{\color{red}\bullet}&1&0&0&0&0&0\\
0&0&0&0&{\color{red}\bullet}&{\color{red}\bullet}&0&0&{\color{red}\bullet}&0&{\color{red}\bullet}&0&{\color{red}\bullet}&{\color{red}\bullet}&0&1&0&0&0&0\\ \hline\hline
\end{array}
\right].
\]

The proof of the type D version of Lemma~\ref{l.KLisom} again follows from the analogous counting argument.

\subsection{Type B}

Here there is nothing new, apart from the formulas for the extended triple.  Given triples $\triple=(\bk,\bp,\bq)$ and $\triple'=(\bk',\bp,\bq)$ (of type B, which is the same as C), we extend $\triple'$ by
\[
  \tilde{p}_i = \begin{cases} n+1-p_i & \text{for } 1\leq i\leq s; \\ n+p_{2s+1-i} & \text{for } s+1\leq i\leq 2s; \end{cases}
\]
\[
  \tilde{q}_i = \begin{cases} n+q_i & \text{for } 1\leq i\leq s; \\ n+1-q_{2s+1-i} & \text{for } s+1\leq i\leq 2s; \end{cases}
\]
and
\[
  \tilde{k}'_i = \begin{cases} k'_i & \text{for } 1\leq i\leq s; \\ p_{2s+1-i}+q_{2s+1-i}+k'_{2s+1-i}-1 & \text{for } s+1\leq i\leq 2s. \end{cases}
\]

\begin{remark}
In \cite{WY}, Woo and Yong introduce the notion of ``pattern interval embedding'' as a tool for comparing singularities of Schubert varieties lying in different flag varieties.  The underlying geometry of their method uses isomorphisms between Richardson varieties.  By contrast, even in the case where $\ell(v)=|\mu|$, the isomorphism of our Theorem~\ref{t.KLisom} does not extend to one between Richardson varieties $\Omega_w \cap X_v$ and $\Omega_{w_\lambda^{-1}} \cap X_{w_\mu^{-1}}$.  In fact, there are many examples where the Bruhat intervals $[w,v]$ and $[w_\lambda^{-1},w_\mu^{-1}]$ are not equinumerous.
\end{remark}

%%%%%%%%%%%%%%%%%%%%%%%%%%%%%%%

\end{document}